\renewcommand{\dim}{{\rm dim}}
\newcommand{\rk}{{\rm rk}}
\newcommand{\Ext}{{\rm Ext}}
\theoremstyle{plain}
\newtheorem{thm}{Theorem}[section]
\newtheorem{cor}[thm]{Corollary}
\newtheorem{prop}[thm]{Proposition}
\newtheorem{lem}[thm]{Lemma}
\theoremstyle{definition}
\newtheorem{defn}[thm]{Definition}
\newtheorem{rmk}[thm]{Remark}
\def\GG{{\textbf G}}
\def\PP{{\textbf P}}
\def\SS{{\textbf S}}
\def\OO{\mathcal{O}}
\def\F{\mathcal{F}}
\def\P{\mathcal{P}}
\def\W{\mathcal{W}}
\def\V{\mathcal{V}}
\def\G{\mathcal{G}}
\def\I{\mathcal{I}}
\def\cM{\mathcal{M}}
\def\cZ{\mathcal{Z}}
\def\cU{\mathcal{U}}
\def\cV{\mathcal{V}}
\def\H{\mathcal{H}}
\author[M. Aprodu]{Marian Aprodu}
\address{Institute of Mathematics "Simion Stoilow" of the Romanian
Academy, Calea Grivi\c tei \indent 21, Sector 1, RO-010702 Bucharest, Romania}
 \email{{\tt
aprodu@imar.ro}}
\author[G. Farkas]{Gavril Farkas}
\address{Humboldt-Universit\"at zu Berlin, Institut F\"ur Mathematik,  Unter den Linden 6
\hfill \newline\texttt{}
 \indent 10099 Berlin, Germany} \email{{\tt farkas@math.hu-berlin.de}}
 \author[A. Ortega]{Angela Ortega}
\address{Humboldt-Universit\"at zu Berlin, Institut F\"ur Mathematik,  Unter den Linden 6
\hfill \newline\texttt{}
 \indent 10099 Berlin, Germany} \email{{\tt ortega@math.hu-berlin.de}}
\title{Minimal resolutions, Chow forms and Ulrich bundles on $K3$ surfaces}
\begin{document}
\maketitle

The Minimal Resolution Conjecture (MRC) for points on a projective variety $X\subset \PP^r$
predicts that the minimal graded free resolution of a general set $\Gamma\subset X$ of points is as simple as the geometry of $X$ allows. Originally, the most studied case has been that when $X=\PP^r$, see \cite{EPSW}.
The general form of the MRC for  subvarieties $X\subset \PP^r$
was formulated in \cite{Mus} and \cite{FMP}. The Betti diagram of a large enough set $\Gamma \subset X$ consisting of $\gamma$ general points is obtained from the Betti diagram of $X$, by adding two rows, indexed by $u-1$ and $u$, where $u$ is an integer depending on $\gamma$. All differences $b_{i+1, u-1}(\Gamma)-b_{i, u}(\Gamma)$ are known and depend on the Hilbert polynomial $P_X$ and $i, u$ and $\gamma$, see \cite{FMP}. The \emph{Minimal Resolution Conjecture}  for $\gamma$ general points on $X$ predicts that
$$b_{i+1, u-1}(\Gamma)\cdot b_{i, u}(\Gamma)=0,$$
for each $i\geq 0$, in which case, the Betti numbers of $\Gamma$ are explicitly given in terms of $P_X$ and $\gamma$. The \emph{Ideal Generation Conjecture} (IGC) predicts the same vanishing but only for $i=1$, that is, $b_{2, u-1}(\Gamma)\cdot b_{1, u}(\Gamma)=0$; equivalently, the number of generators of the ideal $I_{\Gamma}/I_X$ is minimal.
\vskip 3pt
In \cite{FMP}, the Minimal Resolution Conjecture for points on curves is reformulated in geometric terms. For a globally generated linear series  $\ell=(L, V)\in G^r_d(C)$, we consider the kernel vector bundle $M_V$ defined via the evaluation sequence
$$0\longrightarrow M_V\longrightarrow V\otimes \OO_C\longrightarrow L\longrightarrow 0.$$
Then MRC holds for $C\stackrel{|V|}\hookrightarrow \PP^r$ if and only if $M_V$ satisfies the \emph{Raynaud property} (R)
\begin{equation}\label{raynaud}
H^0\Bigl(C, \bigwedge ^i M_V\otimes \xi\Bigr)=0,
\end{equation}
for each $i=0, \ldots, r$ and a general line bundle $\xi$ on $C$ with $\mbox{deg}(\xi)=g-1+\lfloor \frac{id}{r}\rfloor$, see \cite{FMP} Corollary 1.8. When $\mu:=\frac{d}{r}\in \mathbb Z$ (in which case we refer to $C\subset \PP^r$ as being a curve of integer slope), property (R) is satisfied if and only if for $i=0, \ldots, r$, the cycle
$$\Theta_{\bigwedge^i M_V}:=\Bigl\{\xi\in \mathrm{Pic}^{g-1+i \mu}(C): h^0\Bigl(C, \bigwedge ^i M_V\otimes \xi\Bigr)\neq 0\Bigr\}$$ is a divisor in $\mathrm{Pic}^{g-1+i\mu}(C)$.  Equivalently, $\bigwedge^i M_V$ has a theta divisor for all $i\geq 0$.

Our first result is a proof of MRC for curves $C\subset \PP^r$ of integer slope $\mu:=\frac{d}{r}\in \mathbb Z_{\geq 1}$.

\begin{thm}\label{MRC0}
The Minimal Resolution Conjecture holds for a general embedding $C\hookrightarrow \PP^r$
of degree $\mu r$ of any curve $C$ with general moduli and for any integers $\mu, r\geq 1$.
\end{thm}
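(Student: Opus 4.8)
The engine of the proof is the reformulation recalled in the introduction: since $C\hookrightarrow\PP^r$ has integer slope $\mu=\frac{d}{r}$, the Minimal Resolution Conjecture for this embedding is \emph{equivalent} to the assertion that $\bigwedge^i M_V$ carries a theta divisor for every $i=0,\ldots,r$, i.e. that $H^0\bigl(C,\bigwedge^i M_V\otimes\xi\bigr)=0$ for \emph{at least one} line bundle $\xi$ of degree $g-1+i\mu$. The cases $i=0$ and $i=r$, where $\bigwedge^i M_V$ is $\OO_C$, respectively $L^{\vee}$, are classical (a general line bundle of degree $g-1$ has no sections), so the content lies in the range $1\le i\le r-1$. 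The plan is then to reduce the theorem, by semicontinuity, to the construction of a \emph{single} good pair. Fix $g,r,\mu$ with $\rho(g,r,\mu r)\ge 0$ (otherwise $G^r_{\mu r}(C)=\emptyset$ for general $C$ and there is nothing to prove). For $C$ general of genus $g$, the locus $W^r_{\mu r}(C)$ is nonempty of dimension $\rho$, a general $L$ in it satisfies $h^0(L)=r+1$ and is base-point-free, so $\ell=(L,H^0(L))$ is a complete globally generated $\mathfrak g^r_{\mu r}$ and $M_V=M_L$. Running the kernel-bundle construction in families over the universal Brill--Noether variety $\mathcal W^r_{\mu r}\to\mathcal M_g$, the locus of pairs $(C,L)$ at which \emph{all} $\bigwedge^i M_L$ possess a theta divisor is open; since $\mathcal W^r_{\mu r}$ is irreducible and dominates $\mathcal M_g$ in this range, it suffices to exhibit \emph{one} pair $(C,L)$ — with $C$ of genus $g$ but not necessarily of general moduli — at which these exterior powers all have theta divisors.

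For the example I would place $C$ on a $K3$ surface. For every $g\ge 2$ there is a polarized $K3$ surface $(S,H)$ with $\Pic(S)=\ZZ\cdot H$ and $H^2=2g-2$; a smooth $C\in|H|$ has genus $g$ and, by Lazarsfeld's theorem, is Brill--Noether--general, hence carries a general $L\in W^r_{\mu r}(C)$ as above. Passing to the adjoint series $A:=K_C\otimes L^{\vee}$ (globally generated after a general choice of $L$, of degree $2g-2-\mu r$ with $h^0(A)=g-r(\mu-1)$) and the associated Lazarsfeld--Mukai bundle $E=E_{C,A}$ on $S$, with $0\to E^{\vee}\to H^0(A)\otimes\OO_S\to A\to 0$, one restricts to $C$ and uses $N_{C/S}=K_C$ to exhibit $M_L$ (up to a line-bundle twist) inside $E^{\vee}|_C$. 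By the standard manipulations of Koszul/evaluation complexes together with kernel-bundle duality on $C$, the group $H^0\bigl(C,\bigwedge^i M_L\otimes\xi\bigr)$ then becomes an instance of twisted Koszul cohomology of $(C,A)$, which for a curve on a $K3$ lifts to $S$ à la Green--Voisin and is computed by the cohomology of exterior powers of $E$ on $S$. Crucially, \emph{only one} admissible $\xi$ is required, so we are free to take $\xi=\OO_S(aH)|_C(-D)$ for a general effective $D\subset C$ correcting the degree; what must be checked on $S$ is a vanishing of the type $H^0\bigl(S,\bigwedge^i E^{\vee}\otimes\L\bigr)=H^1\bigl(S,\bigwedge^i E^{\vee}\otimes\L(-H)\bigr)=0$ for a suitable $\L\in\Pic(S)$.

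The inputs for this surface-level vanishing are: (i) $E_{C,A}$ is $H$-stable — the usual consequence of $\Pic(S)=\ZZ H$ together with $A$ computing (or nearly computing) the Clifford index of $C$ — from which each $\bigwedge^i E_{C,A}$ is $H$-semistable in characteristic zero; (ii) Bogomolov's inequality for semistable bundles on a surface, combined with Serre duality on the $K3$, forces the vanishing of $H^0$ and $H^2$ of $\bigwedge^i E^{\vee}\otimes\L$ once the slope of $\L$ lies in the admissible window, and hence pins down $\chi$ and the relevant $H^1$; (iii) because $\Pic(S)\to\Pic(C)$ is far from surjective, the correcting divisor $D$ cannot be avoided, so one transports the vanishing from $S$ to its blow-up at the general points of $D$ by an ``imposes independent conditions'' argument. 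Feeding this back into the first paragraph upgrades the single example to the general curve of genus $g$, for all $\mu,r\ge 1$.

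The main obstacle is the interaction of (ii) and (iii) with the exterior powers: one must choose the Mukai vector of $E_{C,A}$ (equivalently the numerical type of the auxiliary series $A$, and the twist $a$) so that the $H$-stability of $\bigwedge^i E_{C,A}$ holds and the Bogomolov-type inequalities close up \emph{simultaneously} for all $i=1,\ldots,r-1$, in a manner uniform in $g,r,\mu$; because of the blow-up at the points of $D$ these are not literally vanishing theorems on $S$, and the genericity estimate must be carried out case-free. A handful of boundary situations — most notably $h^0(A)\le 1$, where the Lazarsfeld--Mukai construction degenerates — fall outside this scheme, but there $\rho$ is small and $M_L$ is accessible directly, so they can be dispatched separately.
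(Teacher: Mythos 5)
Your reduction step (openness of property (R) in families plus irreducibility of the parameter space, so that a single good pair $(C,L,V)$ in the dominating component suffices) is sound in spirit and is also how the paper proceeds, though you should phrase it with $G^r_{\mu r}$ rather than $W^r_{\mu r}$: when $g<\mu r-r$ the general embedding of degree $\mu r$ is by a \emph{non-complete} series, $W^r_{\mu r}(C)=\mathrm{Pic}^{\mu r}(C)$, and your claims that $\dim W^r_{\mu r}(C)=\rho$ and $h^0(L)=r+1$ fail. The genuine gap is in the construction of the single example. Everything rests on the surface-level vanishing in your items (ii)--(iii), and that is asserted, not proved. Stability of $E_{C,A}$ and semistability of its exterior powers, combined with Bogomolov and Serre duality, can at best kill $H^0$ and $H^2$ of $\bigwedge^i E_{C,A}^{\vee}\otimes\L$ for slope reasons; they give no control on $H^1$, and it is precisely an $H^1$-type statement (equivalently, the ``imposes independent conditions'' claim after twisting down by the general divisor $D$, which fails in general for bundles with unbalanced summands and is exactly what must be ruled out) that you need in order to descend $H^0\bigl(C,\bigwedge^i M_L\otimes\xi\bigr)=0$ from $S$ to $C$. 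Moreover, the identification of this group with cohomology of exterior powers of $E_{C,A}$ on $S$ is only gestured at: $M_L$ is not a direct summand of $E_{C,A}^{\vee}|_C$, the comparison goes through non-split extensions and a Green--Voisin type lifting that you do not carry out, and you must make all of this work \emph{simultaneously} for $i=1,\ldots,r-1$ and uniformly in $g,r,\mu$. You yourself flag this as ``the main obstacle,'' so what you have is a program, not a proof; the boundary cases $h^0(A)\le 1$ are likewise waved off.

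For comparison, the paper avoids all of this by a much more elementary degeneration: it specializes $C$ to a general $\mu$-gonal curve with balanced scrollar invariants, takes $A\in G^1_{\mu}(C)$, $L=A^{\otimes r}$ and $V=\mathrm{Sym}^r H^0(C,A)$, so the embedding factors as the degree-$\mu$ cover of a rational normal curve and $M_V\cong (A^{\vee})^{\oplus r}$ splits. Then $\bigwedge^i M_V\cong \bigl(A^{\otimes(-i)}\bigr)^{\oplus\binom{r}{i}}$ is a direct sum of line bundles of equal degree, which has a theta divisor for trivial reasons, and the only real work is to check that $[C,L,V]$ lies in the dominating component $\H_{g,r,\mu r}$; this is done by proving injectivity of the Petri map $\mathrm{Sym}^r(W)\otimes H^0(C,K_C\otimes A^{\otimes(-r)})\to H^0(C,K_C)$ by induction, using the Base Point Free Pencil Trick. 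If you want to salvage your K3 route, the component issue is indeed handled by Lazarsfeld's Brill--Noether--Petri theorem, but you would still have to supply the vanishing theorem for twists of exterior powers of Lazarsfeld--Mukai bundles that your sketch presupposes.
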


The hypothesis on the generality of $C$ implies that its genus $g$ satisfies the inequality $g\leq (r+1)(\mu-1)$ imposed by Brill-Noether theory. We have similarly complete results for curves $C\subset \PP^r$ of degree $d\equiv \pm 1 \mbox{ mod } r$, see Theorem \ref{pm}.

In the case of curves $C\stackrel{|L|}\hookrightarrow \PP^{d-g}$ embedded by a complete linear system of degree $d\geq 2g+5$, counterexamples to MRC for points on $C$ were found in \cite{FMP}\footnote{Note that it is precisely in this range the Minimal Resolution Conjecture fails to hold for $C$ itself as well; if $L\in \mathrm{Pic}^{d}(C)$ is a line bundle of degree $d\geq 2g+5$, then the resolution of $C\stackrel{|L|}\hookrightarrow \PP^{d-g}$ is not natural in the sense of \cite{CEFS}, that is, there exists $i$ such that $b_{i+1,1}(C)\cdot b_{i,2}(C)\neq 0$.}; observe that in these cases $\mu=\frac{d}{d-g}<2$. On the other hand, MRC holds for \emph{all} smooth canonical curves $C\subset \PP^{g-1}$, see \cite{FMP}, as well as for general line bundles of degree $2g$, see \cite{B1}. In both these cases, one has $\mu=2$. This confusing state of affairs is reminiscent of the situation for the projective space $\PP^r$, where it is known \cite{HS} that MRC holds for $r\geq 4$ and $\gamma$ very large with respect to $r$, but fails for each $r\geq 6, r\neq 9$ for many values of $\gamma$, see \cite{EPSW}. Our next result show that for curves, independently of the genus, the \emph{Clifford line} line $d=2r$ in the $(d, r)$-plane governs whether MRC holds for a general curve $C\subset \PP^r$ of genus $g$ and degree $d$.

\begin{thm}\label{MRC1}
Let $C$ be a curve of genus $g$ with general moduli and integers $d, r\geq 1$ such that $d\geq 2r$. The Minimal Resolution Conjecture holds for a general embedding
 $C\hookrightarrow \PP^r$ of degree $d$, whenever the following condition is satisfied:
 \begin{equation}\label{felt}
  d+r \Bigl\lfloor \frac{d}{r} \Bigr\rfloor \geq 2g+2r-2.
\end{equation}
\end{thm}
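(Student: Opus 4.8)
By the reformulation of \cite{FMP}, the statement is equivalent to the Raynaud property (R): for $i=0,\dots,r$ and a general $\xi\in\Pic^{g-1+\lfloor id/r\rfloor}(C)$ one has $H^0(C,\wedge^i M_V\otimes\xi)=0$, where $\ell=(L,V)\in G^r_d(C)$ is a general globally generated linear series. The extreme indices are immediate: $i=0$ reads $H^0(C,\xi)=0$ for general $\xi$ of degree $g-1$, and since $\wedge^r M_V\cong\det M_V\cong L^{-1}$, the index $i=r$ reads $H^0(C,L^{-1}\otimes\xi)=0$ for general $\xi$ of degree $g-1+d$; both hold off the theta divisor. Since the failure locus of (R) in the parameter space of pairs $(C,\ell)$ is closed, it suffices to produce a single Brill--Noether general curve $C$, one $g^r_d$ on it, and one $\xi_i$ of the correct degree for each $i\in\{1,\dots,r-1\}$ for which the vanishing holds. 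A first observation: since $r\lfloor d/r\rfloor\le d$, condition \eqref{felt} forces $2d\ge 2g+2r-2$, i.e. $h^1(C,L)\le 1$ for the underlying line bundle of a general $g^r_d$, so we are firmly in the regime of almost non-special linear series.

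The plan is to manufacture such a curve on a $K3$ surface. Using $d\ge 2r$ and \eqref{felt}, one builds a projective $K3$ surface $S$ whose Picard lattice contains classes $C$ and $A$ with $C^2=2g-2$, $A\cdot C=d$ and $A^2$ suitably chosen (so that $A$ is nef and big and $H^0(S,A)\cong H^0(C,A|_C)$), arranged so that a general $C\in|C|$ is smooth of genus $g$ with general moduli and $A|_C$ — or a general $(r+1)$-dimensional subspace of its sections, in case $h^0(C,A|_C)>r+1$ — is a base-point-free $g^r_d$ of general moduli; here one appeals to the Brill--Noether theory of Lazarsfeld--Mukai type for curves on $K3$ surfaces, and this is exactly where $d\ge 2r$ and \eqref{felt} pin down the admissible Mukai vectors. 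The kernel bundle $M_V$ is then the restriction to $C$ of the kernel bundle $\mathcal M:=\ker\!\bigl(H^0(S,A)\otimes\OO_S\to A\bigr)$ on $S$ (with $\mathcal M|_C$ an extension of a trivial bundle by $M_V$ in the incomplete case), so that the cohomology of $\wedge^i M_V\otimes\xi_i$ on $C$ is governed by that of $\wedge^i\mathcal M$ and its twists on $S$.

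It remains to pick $N\in\Pic(S)$ with $N\cdot C=g-1+\lfloor id/r\rfloor$, set $\xi_i:=N|_C$, and prove the two vanishings $H^0(S,\wedge^i\mathcal M\otimes N)=0$ and $H^1(S,\wedge^i\mathcal M\otimes N(-C))=0$; tensoring $0\to\OO_S(-C)\to\OO_S\to\OO_C\to 0$ by $\wedge^i\mathcal M\otimes N$ and taking cohomology then delivers $H^0(C,\wedge^i M_V\otimes\xi_i)=0$. On the $K3$ surface these are within reach: $\mathcal M$ is, up to twist, a stable (hence simple) sheaf by the theory of sheaves on $K3$ surfaces, its moduli is smooth of the expected dimension, and its exterior powers retain enough stability that the required cohomology groups vanish either by a Bogomolov-type inequality or by a direct count on that moduli space. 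The threshold $d\ge 2r$ plays its familiar role — it is the slope-$2$ (Clifford-line) bound guaranteeing, via a Butler-type argument, semistability of $M_V$ on $C$ — while \eqref{felt} is exactly what keeps every $\wedge^i\mathcal M$ in the admissible range.

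The step I expect to be the main obstacle is transferring the vanishing from $S$ to $C$ uniformly in $i$: the exterior powers $\wedge^i\mathcal M$ are not themselves kernel bundles, so controlling their twisted cohomology on $S$ — and verifying that \eqref{felt} is the sharp numerical hypothesis making this work for every $i$ simultaneously — requires the full strength of the theory of sheaves on $K3$ surfaces together with a careful, lattice-theoretic choice of $S$. Should the $K3$ construction prove recalcitrant for some residues of $d$ modulo $r$, a fallback is to degenerate $(C,\ell)$ to a nodal curve carrying a limit linear series of integer slope, apply Theorem \ref{MRC0} to its components, and propagate the vanishing of $\wedge^i M_V$ through the exact sequences relating kernel bundles on a nodal curve to those on its partial normalizations.
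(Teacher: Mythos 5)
Your reduction to property (R) and the observation that one well-chosen Petri-general triple $(C,V,\xi_i)$ suffices are both correct, but the core of your $K3$ strategy has a genuine gap. The transfer step requires $H^0\bigl(S,\bigwedge^i\mathcal M\otimes N\bigr)=0$ with $N\cdot C=g-1+\lfloor \frac{id}{r}\rfloor$, and this is not within reach of the tools you invoke: the slope of $\bigwedge^i\mathcal M\otimes N$ with respect to the polarization is $g-1+\lfloor\frac{id}{r}\rfloor-\frac{id}{r}\geq 0$, so no semistability, Bogomolov-type, or slope-negativity argument can produce the $H^0$-vanishing. Equivalently, on the curve you are asking that the special line bundles $\xi_i=N|_C$, which lie in the image of $\mathrm{Pic}(S)\to\mathrm{Pic}(C)$ -- a tiny, highly non-generic subvariety -- avoid the theta divisors $\Theta_{\bigwedge^i M_V}$; establishing exactly this kind of generic vanishing is the content of the theorem, and nothing in the proposal makes it plausible for restricted bundles (for $i=0$ it already forces you to pick $N$ with no sections through a divisor class of $C$-degree $g-1$, and for higher $i$ there is no control at all). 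A symptom of the gap is that hypothesis (\ref{felt}) never actually does any work in your argument: you say it ``pins down the admissible Mukai vectors,'' but no step uses it quantitatively.

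Your fallback is much closer to what the paper actually does, but it is missing the base case that makes it run. The paper does not reduce to integer-slope limit series on components; instead it writes $d=ar+d_1$ with $a=\lfloor\frac{d}{r}\rfloor-2$ and $2r+2\leq d_1\leq 3r-2$, sets $g_1=\max\{g-ar,0\}$, and observes that (\ref{felt}) is precisely the inequality $d_1\geq 2g_1-2$ needed to produce a smooth non-special curve $C_1\subset\PP^r$ of degree $d_1$ (or $d_1-1$) and genus $g_1$ whose kernel bundle satisfies (R). That base case is Proposition \ref{bielliptic}: $C_1$ is a bielliptic cover $f:C_1\to E$ of an elliptic curve with $(B,V)$ pulled back from $E$, so $M_{f^*(V)}=f^*(M_{V,B})$ with $M_{V,B}$ semistable by Atiyah (Proposition \ref{atiyah}), and the vanishing $H^0(C_1,\bigwedge^iM_{f^*(V)}\otimes\xi)=0$ follows because $f_*\xi$ is a general semistable rank-$2$ bundle on $E$ and $\bigwedge^iM_{V,B}\otimes f_*\xi$ has non-positive slope. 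One then attaches $a$ rational normal curves, smooths via Sernesi's criterion (Lemma \ref{attach}), and checks by Mayer--Vietoris that with $\deg(\xi_{R_j})=i-1$ the sections of $\bigwedge^iM_X\otimes\xi$ inject into $H^0\bigl(C_1,\bigwedge^iM_{C_1}\otimes\xi_{C_1}(-\sum_j\Delta_j)\bigr)$, whose degree is exactly $g_1-1+\lfloor\frac{id_1}{r}\rfloor$, so the base-case vanishing applies for every $i$ simultaneously. Without this elliptic/bielliptic input (or an equivalent replacement), neither your main route nor your fallback closes the argument.
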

Note that no assumption is made regarding the completeness of the linear series $(L, V)$ inducing the map $\varphi_{V}: C \hookrightarrow \PP^r$. Inequality (\ref{felt}) in Theorem \ref{MRC1} is satisfied when $d\geq g+\frac{3r}{2}-2$. It is also satisfied in the range $d\geq 2g-1$, when all line bundles in question are non-special. The condition $d\geq 2r$ is certainly necessary, for as already pointed out, in the other cases counterexamples to MRC were produced using complete linear series, see \cite{FMP} Theorem 2.2.
\vskip 4pt

We now turn our attention to the IGC for a set $\Gamma$ of $\gamma$ general points lying on an embedded curve $\varphi_V: C\hookrightarrow \PP^r$.
Assume $\gamma\geq d\cdot \mbox{reg}(C)-g+1$ and set $u:=1+\lfloor \frac{\gamma+g-1}{d}\rfloor$; thus $u$ is the integer uniquely determined by the condition
$P_C(u-1)\leq \gamma<P_C(u)$,
see also Section 1 for details. The resolution of the zero-dimensional scheme $\Gamma\subset \PP^r$ has the following form, see also \cite{Mus} Proposition 1.6,
$$\cdots\rightarrow S(-u)^{\oplus (du+1-g-\gamma)}\oplus S(-u-1)^{\oplus b_{1, u}(\Gamma)}\rightarrow S\rightarrow S(\Gamma)\rightarrow 0,$$
where $b_{2, u-1}(\Gamma)-b_{1, u}(\Gamma)=r(du-\gamma+1-g)-d$. The Ideal Generation Conjecture  for $C$ and $\Gamma$ amounts to the multiplication map
$$V\otimes H^0\bigl(C, \I_{\Gamma/C}(u)\bigr)\rightarrow H^0\bigl(C, \I_{\Gamma/C}(u+1)\bigr)$$ having maximal rank, or equivalently, the number of generators of the ideal $I_{\Gamma}/I_C$ being minimal, precisely $b_{1, u}(\Gamma)=\mbox{max}\bigl\{d-r(du-\gamma+1-g), 0\bigr\}$. The following result gives a complete solution to IGC for general curves.

\begin{thm}\label{igc}
Fix integers $g, r, d\geq 0$. Then the Ideal Generation Conjecture holds for a general embedding $C\hookrightarrow \PP^r$ of degree $d$ of any genus $g$ curve $C$ having general moduli.
\end{thm}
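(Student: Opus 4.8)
The plan is to recast IGC as a natural-cohomology statement for twists of the kernel bundle $M_V$, cut it down to two critical twists, and verify those on a $K3$ surface. Following the reformulation of \cite{FMP}, put $u:=1+\lfloor\frac{\gamma+g-1}{d}\rfloor$ and $\xi:=L^{\otimes u}\otimes\OO_C(-\Gamma)$, so that $e:=\deg(\xi)=du-\gamma$ runs over all of $\{g,\ldots,g+d-1\}$ as $\gamma$ runs over the admissible range; since $\reg(C)\ge 3$ for $g\ge1$, the Brill--Noether bound $d\ge\frac{gr}{r+1}+r$ forces $\gamma\ge g$ when $r\ge2$, so for a general $\Gamma$ the line bundle $\xi$ is a general element of $\Pic^e(C)$, in particular non-special. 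Twisting the evaluation sequence $0\to M_V\to V\otimes\OO_C\to L\to 0$ by $\xi$ and taking cohomology identifies $H^0(C,M_V\otimes\xi)$ and $H^1(C,M_V\otimes\xi)$ with the kernel and cokernel of the map $V\otimes H^0\bigl(C,\I_{\Gamma/C}(u)\bigr)\to H^0\bigl(C,\I_{\Gamma/C}(u+1)\bigr)$; by the kernel-bundle description of syzygies one then gets $b_{2,u-1}(\Gamma)=h^0(C,M_V\otimes\xi)$ and $b_{1,u}(\Gamma)=h^1(C,M_V\otimes\xi)$, consistently with the stated identity $b_{2,u-1}(\Gamma)-b_{1,u}(\Gamma)=r(e+1-g)-d=\chi(M_V\otimes\xi)$. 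Thus IGC amounts to: $M_V\otimes\xi$ has natural cohomology for a general $\xi\in\Pic^e(C)$, for every $e\in\{g,\ldots,g+d-1\}$.

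Next I would pare this down to two line bundles. The integer $\chi(M_V\otimes\xi)=r(e+1-g)-d$ increases strictly with $e$; and for any $p\in C$ the space $H^0(M_V\otimes\xi(-p))$ lies in $H^0(M_V\otimes\xi)$, while $H^1(M_V\otimes\xi(p))$ is a quotient of $H^1(M_V\otimes\xi)$. Hence, twisting by a general effective divisor and using semicontinuity, it is enough to prove \emph{(i)} $h^0(C,M_V\otimes\xi)=0$ for a general $\xi\in\Pic^{e_1}(C)$, $e_1:=g-1+\lfloor d/r\rfloor$, and \emph{(ii)} $h^1(C,M_V\otimes\xi)=0$ for a general $\xi\in\Pic^{e_0}(C)$, $e_0:=g-1+\lceil d/r\rceil$ (these two ranges of degrees cover $\{g,\ldots,g+d-1\}$ because $e_0\in\{e_1,e_1+1\}$). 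Using $\det M_V=L^{-1}$ and $M_V^{\vee}=\bigwedge^{r-1}M_V\otimes L$, Serre duality turns \emph{(ii)} into $h^0(C,\bigwedge^{r-1}M_V\otimes\eta)=0$ for a general $\eta$ of degree $g-1+\lfloor(r-1)d/r\rfloor$, so \emph{(i)} and \emph{(ii)} are exactly the $i=1$ and $i=r-1$ instances of property (R). When $r\mid d$ both assert that $M_V$ carries a theta divisor, and so the case $r\mid d$ of IGC follows at once from Theorem \ref{MRC0}; what is left is the $i=1$ and $i=r-1$ vanishing for arbitrary $d$.

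For this I would specialize. Conditions \emph{(i)}, \emph{(ii)} are open on the irreducible parameter space of triples $\bigl(C,(L,V),\xi\bigr)$ with $C$ of general moduli, $(L,V)\in G^r_d(C)$ very ample, and $\xi$ in the appropriate Picard variety, so it suffices to exhibit, for each, one curve $C$, one pair $(L,V)$, and one line bundle $\xi_0$ realizing the vanishing. I would take $C$ on a $K3$ surface $S$ of Picard rank two, with $\Pic(S)$ generated by a polarization $H$ and an auxiliary class, chosen so that a smooth $C\in|H|$ has genus $g$, the linear series cut on $C$ by a suitable class $A$ on $S$ is a very ample $g^r_d$ that is Brill--Noether general — hence $\bigl(C,(L,V)\bigr)$ deforms to a general curve with a general $g^r_d$ — and $M_V=\cM_A|_C$, where $\cM_A$ is the kernel bundle of $A$ on $S$. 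Taking $\xi_0=B|_C$ for a well-adapted class $B$ on $S$ (and, for \emph{(ii)}, using $\omega_C=\OO_S(H)|_C$ and $M_V^{\vee}=\cM_A^{\vee}|_C$), the restriction sequence $0\to\cM_A(B-H)\to\cM_A(B)\to M_V\otimes\xi_0\to 0$ on $S$ reduces the desired vanishing to surface-level vanishings, typically $H^0(S,\cM_A(B))=0$ and $H^1(S,\cM_A(B-H))=0$; via the defining sequence of $\cM_A$ these become maximal-rank statements for multiplication maps $H^0(A)\otimes H^0(B)\to H^0(A+B)$ on $S$, and the twists of $\cM_A$, $\cM_A^{\vee}$ that occur are the Ulrich, or nearly Ulrich, bundles on $S$ detected by its Chow form, which deliver those vanishings.

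The crux is this $K3$ construction. One must engineer the rank-two Picard lattice so that, simultaneously: $C\in|H|$ is smooth of genus $g$ and deforms \emph{together with its} $g^r_d$ to a general pair — Brill--Noether genericity of linear series on $K3$ curves is already delicate once $\Pic(S)$ has rank two, and it must be reconciled with the other demands; $A$ is projectively normal on $S$ with $H^1(S,\cM_A)=0$, so that $M_V$ genuinely equals $\cM_A|_C$; and, for the chosen $B$, the relevant multiplication maps on $S$ really have maximal rank, which confines $B$ (and the auxiliary class) to a narrow cone where the Ulrich-bundle computation does the real work. Carrying this out uniformly for both critical degrees $e_1,e_0$, accommodating incomplete linear series (the range $d>g+r$, where on $S$ one needs $|A|$ of the right dimension together with a base-point-free sub-pencil), and dispatching by hand the small cases (small $g$, or $d<r$, where only \emph{(ii)} survives and the $g^r_d$ may itself be special), is the part demanding care; the rest is formal.
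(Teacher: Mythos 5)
Your reduction of IGC to the two critical vanishings---$H^0(C, M_V\otimes\xi)=0$ for general $\xi$ of degree $g-1+\lfloor d/r\rfloor$ and, via Serre duality, $H^0\bigl(C,\bigwedge^{r-1}M_V\otimes\eta\bigr)=0$ for general $\eta$ of degree $g-1+d-\lceil d/r\rceil$---is correct and is exactly the paper's Proposition \ref{mrcc}(b), and your remark that the case $r\mid d$ follows from Theorem \ref{MRC0} also agrees with the paper. The gap lies in the specialization that is supposed to prove these two vanishings for arbitrary $d$: you never construct the rank-two $K3$ lattice, and the difficulties you yourself list in the last paragraph are not loose ends but the entire content of the proof. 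Two of them are genuinely problematic. First, to conclude anything about a \emph{general} embedding you must show that the specialized pair $(C, A|_C)$ lies in the component $\H_{g,r,d}$ dominating $\cM_g$, i.e.\ that the Petri map of the restricted series is injective; but series cut on $C\in|H|$ by a second class $A\in\Pic(S)$ are precisely those for which Petri genericity is in jeopardy (the kernel of the Petri map is governed by cohomology of Lazarsfeld-Mukai-type bundles on $S$, and the decomposability forced by $\rk\,\Pic(S)=2$ is exactly the mechanism producing non-Petri series), and you offer no argument; nor do you show that a lattice meeting all your simultaneous requirements exists uniformly for every $(g,r,d)$ with $\rho\geq 0$. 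Second, the claim that the surface-level vanishings $H^0(S,\cM_A(B))=0$ and $H^1(S,\cM_A(B-H))=0$ are ``delivered'' by Ulrich bundles is unsubstantiated: the Ulrich bundles of this paper are rank-two Lazarsfeld-Mukai bundles attached to pencils on cubic sections, with fixed determinant $\OO_S(3)$, and have no bearing on twists of the kernel bundle $\cM_A$ of your auxiliary class, so no existing result hands you the maximal-rank multiplication maps you need.

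The paper's own proof avoids surfaces altogether: using Theorem \ref{MRC0} (whose Petri step is an explicit base point free pencil trick induction on a gonal curve, where the kernel bundle splits as $(A^{\vee})^{\oplus r}$ and so trivially has theta divisors), it produces a smooth curve $C_1$ of integer slope and degree $r\lfloor \frac{d}{r}\rfloor$ satisfying condition (R), attaches $d_1=d-r\lfloor \frac{d}{r}\rfloor$ general $2$-secant lines, smooths the resulting nodal curve by Sernesi's criterion (Lemma \ref{attach}), and verifies the two vanishings on the nodal curve by a Mayer-Vietoris argument, taking the line bundle $\xi$ to have degree $-1$ on each line. To salvage your route you would need, at a minimum, a proof of Petri injectivity for the restricted series and a proof of the maximal-rank statements on $S$; as written, the proposal establishes only the reformulation, which is already Proposition \ref{mrcc}.
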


It should be pointed out that Theorems \ref{MRC0}, \ref{MRC1} and \ref{igc} are optimal in the sense that they establish MRC or IGC for a \emph{general} curve $[C]\in \cM_g$ and a \emph{general} linear series $\ell\in G^r_d(C)$. Having fixed $g, r$ and $d$, one cannot expect a more precise statement. It suffices indeed to consider the situation in genus zero. To a non-degenerate rational curve $R\subset \PP^r$ of degree $d$, one associates the splitting type $a_1\leq \ldots \leq a_r$ of the vector bundle $T_{\PP^r|R}(-1)=\OO_{\PP^1}(a_1)\oplus \cdots \oplus \OO_{\PP^1}(a_r)$. The splitting type of a general $R$ as above is \emph{balanced}, that is, with $0\leq a_r-a_1\leq 1$, and then $a_1=\lfloor \frac{d}{r}\rfloor$ and $a_r=\lceil \frac{d}{r} \rceil$; the locus of curves with non-balanced splitting type is a divisor in the (irreducible) Hilbert scheme of rational curves $R\subset \PP^r$ of degree $d$. On the other hand, it is easy to see cf. \cite{Mus} Corollary 3.8, that $R$ verifies MRC if and only if its splitting type is balanced. Such examples can be constructed on every curve of positive genus, by considering linear series with exceptional secant behaviour, which renders the restriction of $T_{\PP^r}(-1)$ too unstable. It turns out, that systematically MRC fails along certain proper subvarieties of the Hilbert scheme in question, but holds generically.

\vskip 5pt

The second topic we investigate in this paper concerns Chow forms and Ulrich bundles. We fix a $k$-dimensional variety $X\subset \PP^r$ of degree $d$. Following \cite{ES}, a vector bundle $E$ on $X$ is said to be an \emph{Ulrich bundle} if $E$ admits a \emph{completely linear} $\OO_{\PP^r}$-resolution
$$0\rightarrow \OO_{\PP^r}(-r+k)^{\oplus a_{r-k}}\rightarrow \cdots \rightarrow \OO_{\PP^r}(-1)^{\oplus a_1}\rightarrow \OO_{\PP^r}^{\oplus a_0}\rightarrow E\rightarrow 0,$$
where $a_0= d\cdot \mbox{rk}(E)$ and $a_i={r-k\choose i}a_0$ for $i\geq 1$. In terms more intrinsic to $X$, this amounts to requiring $E$ to be an ACM bundle, that is, $H^i(X, E(t))=0$ for all
$t$ and $i=1, \ldots, k-1$, and the module ${\bf{\Gamma}}_*(E):=\oplus_{t\in \mathbb Z} H^0(X, E(t))$  to have the maximum number of generators, which equals $d\cdot \mbox{rk}(E)$, all appearing in degree $0$. It is conjectured in \cite{ES} that every $k$-dimensional projective variety $X\subset \PP^r$ carries an Ulrich bundle. As explained in \cite{ES2}, the existence of an Ulrich bundle on $X$ implies that the \emph{cone of cohomology tables}
$$C(X, \OO_X(1)):=\mathbb Q_{\geq 0} \Bigl\langle \Bigl(h^i(X, F(m))\Bigr)_{0\leq i\leq k, m\in \mathbb Z}: F \in \mbox{Coh}(X) \Bigr \rangle \subset \mbox{Mat}_{k+1, \infty}(\mathbb Q)$$
is the same as that for the projective space $\PP^k$.
This conjecture has been confirmed so far only in few cases. A hypersurface carries an Ulrich bundle of exponential rank, see \cite{BHU}. Curves also carry Ulrich line bundles \cite{ES};  a vector bundle $E$ on a smooth curve $C\subset \PP^r$ having slope $\mu(E)=d+g-1$ is an Ulrich bundle, if and only
$$H^0(C, E(-1))=0\Leftrightarrow \OO_C(-1) \notin \Theta_E.$$

\vskip 3pt

When $X\subset \PP^r$ is a hypersurface, the existence of Ulrich bundles is related to classical problems in algebraic geometry, see \cite{B2}. If $\mbox{rk}(E)=1$, then one has a determinantal presentation of $X:\{\mbox{det}(M)=0\}$, where $M=(\ell_{ij})_{1\leq i, j\leq d}$ is a matrix of linear forms; a bundle $E$ with $\mbox{rk}(E)=2$ corresponds to a Pfaffian equation of $X: \{\mbox{pf}(M)=0\}$, where $M$ is a $(2d)\times (2d)$ skew-symmetric linear  matrix.

\vskip 3pt

Del Pezzo surfaces $X_d\subset \PP^d$ of degree $d$  have Ulrich bundles of any rank $r\geq 2$, see \cite{MP}, \cite{CHGS}. A remarkable connection between Ulrich bundles and the Ideal Generation Conjecture as studied in this paper is established in \cite{CKM1}. Precisely, there exists an Ulrich bundle $E$ on $X$ with $\mbox{det}(E)=\OO_X(C)$, if and only if the curve $C\subset X$ has degree $d\cdot \mbox{rk}(E)$ and IRC holds for $C$. Finally, we mention that using the techniques of \cite{AF11}, Coskun, Kulkarni and Mustopa \cite{CKM12} have shown that every smooth quartic surface $X\subset \PP^3$ carries a rank $2$ Ulrich bundle, thus generalizing work of Beauville \cite{B2}.

\vskip 5pt

In this paper we describe the moduli space of Ulrich bundles on a polarized $K3$ surface. First, we show that $K3$ surfaces satisfying a mild generality condition carry Ulrich bundles of rank $2$, satisfying the skew-symmetry requirement of \cite{ES}.

\begin{thm}
\label{thm: ulrich}
Let $S\subset \PP^{s+1}$ be a polarized $K3$ surface. If the Clifford index of cubic sections of $S$ is computed by $\mathcal O_S(1)$, then $S$ carries a $(2s+10)$-dimensional family of stable rank $2$ Ulrich bundles $E$ with $\mathrm{det}(E)=\mathcal O_S(3)$.
\end{thm}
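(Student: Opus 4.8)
The approach is to reduce the Ulrich property to a constraint on the Mukai vector plus a single cohomological vanishing, to identify the resulting bundles with an open subset of a moduli space of sheaves on $S$, and to exhibit one such bundle by a Lazarsfeld--Mukai construction on a cubic section. Concretely, write $H=\OO_S(1)$, so $H^2=2s$, and recall that a rank $2$ bundle $E$ on $S$ is Ulrich precisely when $H^i(S,E(-1))=H^i(S,E(-2))=0$ for all $i$. If $\det(E)=\OO_S(3)$, a Riemann--Roch computation on $S$ gives $\chi(E(-1))=\chi(E(-2))=5s+4-c_2(E)$, so both Euler characteristics vanish exactly when $c_2(E)=5s+4$, i.e. when the Mukai vector of $E$ equals
$$v:=v(E)=\bigl(2,\,3H,\,4s-2\bigr),\qquad \langle v,v\rangle=9H^2-4(4s-2)=2s+8 .$$
Assuming $c_2(E)=5s+4$: since $E^{\vee}\cong E(-3)$, Serre duality identifies $H^2(S,E(-1))$ with $H^0(S,E(-2))^{\vee}$ and $H^2(S,E(-2))$ with $H^0(S,E(-1))^{\vee}$, while multiplication by a section of $\OO_S(1)$ embeds $H^0(S,E(-2))$ into $H^0(S,E(-1))$; together with $\chi(E(-1))=\chi(E(-2))=0$ this shows that $E$ is Ulrich if and only if $H^0(S,E(-1))=0$. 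Hence the theorem reduces to producing a $(2s+10)$--dimensional family of stable bundles $E$ with $v(E)=v$ and $H^0(S,E(-1))=0$.

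Now $v$ is primitive and $\langle v,v\rangle=2s+8>0$, so the moduli space $M_{S,H}(v)$ of $H$--stable sheaves with Mukai vector $v$ is non--empty (Yoshioka) and, by Mukai's theory, is smooth, irreducible and projective of dimension $\langle v,v\rangle+2=2s+10$ (should $H$ fail to be $v$--generic, one replaces $M_{S,H}(v)$ by its $H$--stable locus, still of this dimension). Since $H^0(S,E(-1))=0$ is an open condition on $M_{S,H}(v)$ by semicontinuity, it suffices to exhibit a single $E_0\in M_{S,H}(v)$ with $H^0(S,E_0(-1))=0$: the general member of $M_{S,H}(v)$ is then a stable Ulrich bundle with $\det=\OO_S(3)$, and these form the asserted family.

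To produce $E_0$, I would start from a smooth cubic section $C\in|3H|$, of genus $g(C)=9s+1$. By the hypothesis on $S$ and the Green--Lazarsfeld theorem that the Clifford index is constant along a linear system on a $K3$ surface, $\Cliff(C)=4s-2$, computed by $\OO_C(1)$. Choose a base--point--free pencil $A$ of degree $5s+4$ on $C$ (these exist, $5s+4$ lying well above the gonality of $C$), general in $W^1_{5s+4}(C)$, and let $E_0=E_{C,A}$ be the dual of the kernel of the evaluation $H^0(C,A)\otimes\OO_S\to A$. Then $E_0$ is locally free of rank $2$ with $v(E_0)=v$, fitting into
$$0\longrightarrow H^0(C,A)^{\vee}\otimes\OO_S\longrightarrow E_0\longrightarrow \omega_C\otimes A^{-1}\longrightarrow 0 .$$
Twisting by $\OO_S(-1)$, using $H^i(S,\OO_S(-1))=0$ for all $i$ and $\omega_C=\OO_C(3)$, gives an isomorphism
$$H^0\bigl(S,E_0(-1)\bigr)\;\cong\;H^0\bigl(C,\OO_C(2)\otimes A^{-1}\bigr),$$
a line bundle of degree $7s-4$ and Euler characteristic $-2s-4$ on $C$. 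It then remains to choose $A$ so that, simultaneously, $E_{C,A}$ is $H$--stable (hence lies in $M_{S,H}(v)$) and $H^0(C,\OO_C(2)\otimes A^{-1})=0$. The Clifford--index hypothesis is what makes both possible: a destabilising sub--line bundle of $E_{C,A}$ would, via the restriction of the sequence above to $C$, produce a pencil on $C$ of Clifford index strictly below $\Cliff(C,\OO_C(1))=4s-2$, which is impossible; and $H^0(C,\OO_C(2)\otimes A^{-1})\neq 0$ would force $A=\OO_C(2)(-D)$ for an effective divisor $D$ of degree $7s-4$, a special secant condition excluded for general $A\in W^1_{5s+4}(C)$ by a general--position argument, again controlled by the expected Clifford behaviour of the linear series subordinate to $\OO_C(2)$. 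I expect this combined step---extracting both the stability of $E_{C,A}$ and the vanishing of that $H^0$ from the single hypothesis on cubic sections---to be the main obstacle; the remaining points are formal.

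Finally, for a rank $2$ Ulrich bundle on a surface $S$ with $\omega_S=\OO_S$, the skew--symmetry requirement of \cite{ES} is exactly the condition $\det(E)=\OO_S(3)$, which is built into the construction: the alternating pairing $E\otimes E\to\det(E)=\OO_S(3)$ coming from the rank $2$ isomorphism $E^{\vee}\cong E\otimes(\det E)^{-1}$ is non--degenerate, and provides the desired skew--symmetric linear presentation, equivalently a Pfaffian representation of the Chow form of $S$.
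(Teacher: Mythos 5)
Your reduction of the Ulrich condition to the single vanishing $H^0(S,E(-1))=0$ (given $\det(E)=\OO_S(3)$ and $c_2(E)=5s+4$), and the identification $H^0(S,E_{C,A}(-1))\cong H^0(C,\OO_C(2)\otimes A^{\vee})$ for the Lazarsfeld--Mukai bundle of a pencil $A\in W^1_{5s+4}(C)$ with $C\in|\OO_S(3)|$, are correct and coincide with the paper's set-up. But the step you defer to ``a general-position argument \dots controlled by the expected Clifford behaviour'' is not a side issue: it is the theorem. Because $C$ lies on a $K3$ surface and $\Cliff(C)$ is computed by $\OO_C(1)$, the curve $C$ is far from Brill--Noether general, $W^1_{5s+4}(C)$ can have components of excess dimension, and there is no a priori control over the secant behaviour of a ``general'' $A$ on a \emph{fixed} $C$; one cannot exclude $A=\OO_C(2)(-D)$ by counting parameters on $C$ alone. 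The paper resolves this by a genuinely variational argument: it works on a dominating component $\W$ of the relative Brill--Noether variety over the full linear system $|\OO_S(3)|$ (of dimension at least $10s+6$), encodes a pair $(E_{C,A},\ell)$ with $\ell\in \PP H^0(S,E_{C,A})$ by the zero scheme $\Gamma\in S^{[5s+4]}$ of the section (which satisfies Cayley--Bacharach with respect to $|\OO_S(3)|$ and determines $E_{C,A}$ uniquely), observes that $H^0(S,E_{C,A}(-1))\neq 0$ forces $\Gamma$ to lie on a quadric section $D\in|\OO_S(2)|$, and then proves the key estimate (Lemma \ref{quadr}) that divisors $\Gamma\in D_{5s+4}$ with $h^0(D,\OO_D(\Gamma-H))\geq 1$ move in at most $2s+7$ parameters, so the failure locus has dimension at most $6s+8\leq \dim(\P)-1$. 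Nothing in your proposal substitutes for this count, so the central vanishing remains unproved.

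Two further steps are asserted rather than proved. First, the existence of a \emph{complete, base point free} $\mathfrak g^1_{5s+4}$ on $C$ does not follow from ``$5s+4$ lies above the gonality'': adding base points to a gonality pencil destroys base point freeness or completeness, and the paper needs Lemma \ref{lemma: AF11}, resting on the $K3$-specific bounds of \cite{AF11} for $\dim W^1_{g-k+2}(C)$, to produce such a pencil -- this is precisely where the Clifford-index hypothesis enters. Second, your stability argument (a destabilising sub-line bundle of $E_{C,A}$ yields a pencil on $C$ of Clifford index below $4s-2$) is unsubstantiated and, as stated, would prove stability of \emph{every} $E_{C,A}$, which is more than the paper claims; the paper instead proves simplicity via \cite{AF11}, uses \cite{CHGS} to write a strictly semistable Ulrich bundle as an extension of two Ulrich line bundles with $M\cdot H=N\cdot H=3s$ and $M^2=N^2=4s-4$, and shows such extensions depend on only $2s+5<2s+10$ parameters, so that only the general member of the family is stable. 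The packaging via the moduli space $M_{S,H}(v)$ with $v=(2,3H,4s-2)$ is fine, but it removes neither gap.
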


A smooth cubic section $C\in |\OO_S(3)|$ is a curve of genus $9s+1$ and degree $6s$, hence the inequality $\mbox{Cliff}(C)\leq \mbox{Cliff}(\OO_C(1))=4s-2$  holds.
If $\mbox{Cliff}(C)<4s-2$, then from \cite{GL} it follows that there exists an effective class $D\in \mbox{Pic}(S)$ such that
\begin{equation}\label{nöl}
h^0(S, \OO_S(D))\geq 2, \ \ D\cdot H\leq 3s \ \ \mbox{ and    } \ 3D\cdot H-D^2\leq 4s-1.
\end{equation}

The existence of rank $2$ Ulrich bundles is established for all $K3$ surfaces in the complement of the Noether-Lefschetz locus singled out by (\ref{nöl}). In particular, Theorem \ref{thm: ulrich} holds for every $K3$ surface of Picard number one.  Using \cite{L2} page 185, condition (\ref{nöl}) is never satisfied for smooth quartics (that is, $s=2$), for in this case we have complete intersection curves, whose Clifford index is computed by multisecants.  The restriction to rank $2$ is natural, for a very general polarized $K3$ surface carries \emph{no} Ulrich bundles of odd rank, see Section 2.
\vskip 3pt

The case $s=2$ of Theorem \ref{thm: ulrich} was proved in \cite{CKM12}. Our proof of Theorem \ref{thm: ulrich} partly grew out of an attempt to generalize that result. The bundles $E$ are \emph{special Ulrich bundles} in the sense of \cite{ES} Proposition 6.2; when $\mbox{det}(E)=\OO_S(3)$ the Ulrich condition is equivalent to $E$ being $0$-regular. The candidate bundles are \emph{Lazarsfeld-Mukai bundles} $E:=E_{C, A}$, where $C\in |\OO_S(3)|$ is a suitable cubic section of $S$ and $A\in W^1_{5s+4}(C)$ is a complete base point free pencil. Since $C$ is far from being Brill-Noether general, showing that a general cubic section $C\subset S$ carries a pencil $\mathfrak g^1_{5s+4}$ inducing a simple Ulrich bundle, becomes a rather tricky variational problem, which we solve in a way reminiscent of our proof \cite{AF11} of Green's conjecture for curves on arbitrary $K3$ surfaces. The role of the required equality $\mbox{Cliff}(C)=\mbox{Cliff}(\OO_C(1))$ is that it ensures the existence of a complete  base point free pencil $\mathfrak g^1_{5s+4}$.
\vskip 3pt

By taking direct sums of Ulrich bundles, Theorem \ref{thm: ulrich} implies the existence of Ulrich bundles of any even rank on $S$. We show that for very general $K3$ surfaces these direct sums can be deformed to stable Ulrich bundles:
\begin{thm}
Let $S\subset \PP^{s+1}$ be a polarized $K3$ surface with $\mathrm{Pic}(S)=\mathbb Z\cdot [H]$. For every integer $a\geq 1$, there exists an $(8a^2+2a^2s+2)$-dimensional family of \emph{stable} Ulrich bundles of rank $2a$. Furthermore, there exists a component of the corresponding moduli space $M_S\bigl(2a, \OO_S(3a), 9a^2s-4a(s-1)\bigr)$ of the moduli space of vector bundles on $S$, whose general point corresponds to a stable Ulrich bundle of rank $2a$.
\end{thm}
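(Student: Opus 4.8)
The plan is to realise the rank $2a$ stable Ulrich bundles as deformations of direct sums of the rank $2$ Ulrich bundles furnished by Theorem~\ref{thm: ulrich}, transporting the Ulrich property along the deformation via its openness. First I would fix the numerics. Since $\mathrm{Pic}(S)=\mathbb Z\cdot[H]$, condition (\ref{nöl}) cannot hold: a class $D$ with $h^0(S,\OO_S(D))\geq 2$ is effective and nonzero, hence $D=mH$ with $m\geq 1$, and then either $D\cdot H=2ms>3s$ (if $m\geq 2$), or $m=1$ and $3D\cdot H-D^2=2H^2=4s>4s-1$. Thus Theorem~\ref{thm: ulrich} applies and $S$ carries stable rank $2$ Ulrich bundles $E$ with $\det(E)=\OO_S(3)$; writing $H^2=2s$, such an $E$ has $c_2(E)=5s+4$ and primitive Mukai vector $v_0:=v(E)=(2,3H,4s-2)$ with $\langle v_0,v_0\rangle=9H^2-4(4s-2)=2s+8$. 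The $(2s+10)$-dimensional family of Theorem~\ref{thm: ulrich} is a dense open subset of the irreducible holomorphic symplectic moduli space $M_H(v_0)$, of dimension $\langle v_0,v_0\rangle+2=2s+10$. For rank $2a$ set $v:=a\,v_0=(2a,3aH,a(4s-2))$; Riemann--Roch shows a bundle with Mukai vector $v$ has rank $2a$, $c_1=3aH$ and $c_2=9a^2s-4a(s-1)$, so the pertinent space is $M:=M_S\bigl(2a,\OO_S(3a),9a^2s-4a(s-1)\bigr)=M_H(a\,v_0)$, and $\langle v,v\rangle+2=a^2(2s+8)+2=8a^2+2a^2s+2$ is exactly the claimed dimension.

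Next I would produce a point of $M$ lying in the Ulrich locus. Choosing $a$ general, pairwise non-isomorphic stable rank $2$ Ulrich bundles $E_1,\dots,E_a\in M_H(v_0)$ — possible because this family is positive-dimensional — the polystable sheaf $G:=E_1\oplus\cdots\oplus E_a$ has Mukai vector $v$, so $[G]\in M$, and $G$ is again an Ulrich bundle: for determinant $\OO_S(3a)$ the Ulrich condition amounts to being locally free with $H^i(S,G(-1))=H^i(S,G(-2))=0$ for all $i$, a property obviously additive under direct sums. Since being locally free is open and these vanishings are open by semicontinuity, the Ulrich locus $\mathcal U\subseteq M$ is a non-empty open subset.

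It then remains to move $[G]$ to a stable point of $M$ without leaving $\mathcal U$, and here the deformation theory of sheaves on $K3$ surfaces does the work. As $\mathrm{Pic}(S)=\mathbb Z\cdot[H]$, the polarization $H$ is automatically $v$-generic, so by Mukai's smoothness theorem together with Yoshioka's irreducibility results, $M=M_H(a\,v_0)$ is irreducible of dimension $\langle v,v\rangle+2=8a^2+2a^2s+2$ and smooth along the open locus $M^{s}$ of stable bundles, which is non-empty since $v_0$ is primitive with $\langle v_0,v_0\rangle=2s+8>0$ (and the locus of non-locally-free sheaves has positive codimension, by the standard double-dual estimate). Two non-empty open subsets of the irreducible $M$ must meet, so $\mathcal U\cap M^{s}$ is a dense open subset of $M$ parametrising \emph{stable} Ulrich bundles of rank $2a$; it is therefore $(8a^2+2a^2s+2)$-dimensional, and $M$ is a component of $M_S\bigl(2a,\OO_S(3a),9a^2s-4a(s-1)\bigr)$ whose general point is a stable Ulrich bundle.

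The real obstacle is this last step — knowing that the polystable Ulrich bundle $G$ lies in the closure of the stable locus of $M$. If one prefers an argument not quoting Yoshioka's irreducibility, one can instead build a stable Ulrich bundle directly, in the spirit of O'Grady's smoothing of direct sums: one has $\dim\mathrm{Ext}^1(E_{i+1},F_i)=i(2s+8)>0$, where $F_i$ is the iterated extension of $E_1,\dots,E_i$, this following from $\mathrm{Hom}(E_{i+1},F_i)=0=\mathrm{Hom}(F_i,E_{i+1})$ (the Jordan--Hölder factors of $F_i$ are the $E_1,\dots,E_i$, all stable of the same slope and none isomorphic to $E_{i+1}$); so one forms successive \emph{non-split} extensions $0\to F_i\to F_{i+1}\to E_{i+1}\to 0$. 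The outcome $F_a$ is simple (this uses non-splitness of the extensions) and still Ulrich, the Ulrich vanishings passing through short exact sequences with Ulrich sub and quotient; being only strictly semistable, it is nevertheless a smooth point of the moduli space of simple sheaves, and a versal deformation — for which one must check that the strictly semistable locus has positive codimension near $[F_a]$ — yields a nearby bundle that is both stable and Ulrich. In either route the heart of the matter is the positivity $\langle v_0,v_0\rangle=2s+8>0$, which makes $\mathrm{Ext}^1$ large enough to smooth $G$ (or $F_a$) to a stable bundle while remaining in the open Ulrich locus.
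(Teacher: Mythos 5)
The decisive content of Theorem \ref{highrank} is the escape from the strictly semistable locus: one must show that stable Ulrich bundles with the \emph{non-primitive} Mukai vector $v=a\,v_0$ actually exist and dominate, and this is where your argument has a genuine gap. In your main route you assert that the stable locus of $M_H(a\,v_0)$ is non-empty ``since $v_0$ is primitive with $\langle v_0,v_0\rangle=2s+8>0$''; primitivity of $v_0$ only produces stable sheaves with vector $v_0$, not with vector $a\,v_0$, and for a non-primitive vector the non-emptiness and density of the stable locus, as well as the irreducibility of $M_H(a\,v_0)$ in dimension $\langle v,v\rangle+2$, are substantially deeper theorems (O'Grady, Yoshioka, Kaledin--Lehn--Sorger) that you would need to cite precisely or reprove -- they are exactly what the paper establishes by hand. (A smaller point: openness of the Ulrich locus in $M$ needs a word at strictly semistable points, since these parametrize S-equivalence classes rather than sheaves; this is fixable because the Ulrich vanishings are inherited by, and detected on, the Jordan--H\"older factors.) Your numerics ($c_2(E)=5s+4$, $\langle v_0,v_0\rangle=2s+8$, $\langle v,v\rangle+2=8a^2+2a^2s+2$) are correct and agree with the paper.

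Your ``alternative'' route is in fact the paper's proof, following \cite{CHGS} Theorem 5.7 and \cite{C} Theorem 3.1: take a non-split extension of stable Ulrich bundles of ranks $2$ and $2a-2$ (non-split extensions exist because $\chi(S,F_1^{\vee}\otimes F_2)=-(2s+8)(a-1)<0$ by Lemma \ref{ext}), observe it is a simple Ulrich bundle with $\mathrm{dim}\ \mathrm{Def}(E)=2+2a^2s+8a^2$, and then deform. But the step you explicitly leave unchecked -- that the strictly semistable locus has positive codimension near this point -- is the heart of the matter, not a formality. The paper carries it out by a parameter count: for stable, pairwise non-isomorphic Ulrich bundles $E_1,\ldots,E_n$ of ranks $2a_i$, and $F$ an iterated extension of rank $2b$ involving $k$ copies of $E_j$, one has $\mathrm{dim}\ \mathrm{Hom}(E_j,F)\leq k$ and $\mathrm{dim}\ \Ext^2(E_j,F)\leq k$, whence $\mathrm{dim}\ \Ext^1(E_j,F)\leq k+(2s+8)a_jb$ by Lemma \ref{ext}; summing these bounds over the $m-1$ successive extensions shows that Ulrich bundles with Jordan--H\"older graded object $E_1^{\oplus k_1}\oplus\cdots\oplus E_n^{\oplus k_n}$ depend on strictly fewer than $2+2a^2(s+4)$ parameters, so the general deformation is stable (and remains Ulrich by openness). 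Without this count, or a precise citation replacing it, your proposal does not yet prove the theorem.
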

The case $s=2$ of this result has been recently established by Coskun \cite{C} and our method of proof is very similar to his. Since $K3$ surfaces with Picard number one carry no Ulrich bundles of odd rank, this answers completely the question which numbers appear as ranks of stable Ulrich bundles on $S$. In the language of \cite{MP}, Theorem \ref{highrank} establishes that every $K3$ surface of Picard number one has \emph{wild representation type}.

\vskip 4pt

Our results on Ulrich bundles imply via \cite{ES} that the Chow form of a polarized $K3$ surface admits a pfaffian B\'ezout form in Pl\"ucker coordinates. We fix as before a polarized $K3$ surface
$S\subset \PP^{s+1}$ and let $\GG:=G(s-1, H^0(S, \OO_S(1))^{\vee})$ be the Grassmannian of codimension $3$ planes in $\PP^{s+1}$ and $\cU$ the rank $3$ tautological bundle on $\GG$ sitting in the exact sequence
$$0\longrightarrow \cU \longrightarrow H^0(S, \OO_S(1))\otimes \OO_{\GG} \longrightarrow \mathcal Q \longrightarrow 0.$$
Let ${\bf \Lambda}:=\bigwedge^{\bullet} H^0(S, \OO_S(1))^{\vee}$ be the exterior algebra and ${\bf \Lambda}^{\vee}:=\bigwedge^{\bullet} H^0(S, \OO_S(1))$ its dual.
Using the identification $\bigwedge^{s-1} H^0\bigl(S, \OO_S(1)\bigr)^{\vee}\cong \bigwedge ^3 H^0\bigl(S, \OO_S(1)\bigr)$, we view elements from $\bigwedge^3 H^0\bigl(S, \OO_S(1)\bigr)^{\vee}$ as Pl\"ucker coordinates on $\GG$. We recall that the \emph{Cayley-Chow} form of $S$ is the degree $2s$ hypersurface
$$\cZ(S):=\{L\in \GG: L\cap S\neq \emptyset\}.$$
Putting together Theorem \ref{thm: ulrich} and \cite{ES} Corollary 3.4, we obtain the following result:

\begin{thm}\label{chow}
Let $S\subset \PP^{s+1}$ be a polarized $K3$ surface such that the Clifford index of a cubic section is computed by $\OO_S(1)$ and $E$ a rank two Ulrich bundle on $S$. Then there exists a skew-symmetric morphism of vector bundles of rank $4s$ on $\GG$
$$\mathrm{{\bf{U}}}_3(\varphi):H^0(S, E)^{\vee}\otimes \bigwedge ^3 \cU\longrightarrow H^0(S, E)\otimes \OO_{\GG}$$
whose pfaffian cuts out precisely the Cayley-Chow form of $S$.
\end{thm}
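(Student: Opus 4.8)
The plan is to deduce the statement by feeding the rank two Ulrich bundle $E$ produced by Theorem \ref{thm: ulrich} into the description of Chow forms via exterior syzygies from \cite{ES}; the genuine work is then to check that $E$ satisfies the hypotheses of that theory and to identify the resulting objects with the ones in the statement. First I would record the two structural properties of $E$ that are needed. Since $S$ is a $K3$ surface we have $\omega_S=\OO_S$, and since $\mathrm{rk}(E)=2$ the wedge product is a nondegenerate alternating pairing $E\otimes E\to\det(E)=\OO_S(3)$, equivalently an isomorphism $E\cong E^{\vee}\otimes\det(E)=E^{\vee}\otimes\omega_S(3)$; this is exactly the structure of a \emph{special} Ulrich bundle on a surface in the sense of \cite{ES} (for which $\det(E)=\omega_X\otimes\OO_X(3)$), and it is the regime in which their construction outputs skew-symmetric representations. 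Secondly, as already observed after Theorem \ref{thm: ulrich}, $E$ is $0$-regular, so $\mathbf{\Gamma}_*(E)=\bigoplus_t H^0(S,E(t))$ is generated in degree $0$ with a linear $\OO_{\PP^{s+1}}$-resolution and $h^0(S,E)=\deg(S)\cdot\mathrm{rk}(E)=4s$.

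Next I would run the machinery of \cite{ES} on the Grassmannian $\GG=G(s-1,H^0(S,\OO_S(1))^{\vee})$, which is precisely the Grassmannian of codimension $\dim(S)+1=3$ planes attached to a surface in $\PP^{s+1}$ in the theory of Chow forms. Viewing the linear strand of the resolution of $E$ as a module over the exterior algebra $\mathbf{\Lambda}=\bigwedge^{\bullet}H^0(S,\OO_S(1))^{\vee}$ via the multiplication maps $H^0(S,\OO_S(1))\otimes H^0(S,E(t))\to H^0(S,E(t+1))$, the functor $\mathbf{U}$ of \cite{ES} transports it to a complex of bundles on $\GG$, and the Ulrich condition forces this complex to be concentrated in a single map
$$\mathbf{U}_3(\varphi)\colon H^0(S,E)^{\vee}\otimes\bigwedge^{3}\cU\longrightarrow H^0(S,E)\otimes\OO_{\GG}$$
of vector bundles of rank $h^0(S,E)=4s$, whose entries are linear forms in the Pl\"ucker coordinates since $\mathrm{Hom}(\bigwedge^3\cU,\OO_{\GG})=H^0(\GG,\OO_{\GG}(1))=\bigwedge^3 H^0(S,\OO_S(1))^{\vee}$. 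By \cite{ES}, Corollary 3.4 (the self-dual case of their main theorem), this map is generically an isomorphism, its determinant is a nonzero scalar multiple of $\cZ(S)^{\mathrm{rk}(E)}=\cZ(S)^2$ (a form of degree $4s=\deg(S)\cdot\mathrm{rk}(E)$), and, using the alternating self-duality recorded above, $\mathbf{U}_3(\varphi)$ may be taken skew-symmetric. Combining $\det\mathbf{U}_3(\varphi)=(\mathrm{pf}\,\mathbf{U}_3(\varphi))^2$ with $\det\mathbf{U}_3(\varphi)\in\mathbb{C}^{*}\cdot\cZ(S)^2$ then forces $\mathrm{pf}\,\mathbf{U}_3(\varphi)$, a form of degree $2s=\deg\cZ(S)$ vanishing on the irreducible hypersurface $\cZ(S)$, to be a nonzero scalar multiple of the Cayley--Chow form, which is the assertion.

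The step I expect to require the most care, and the main obstacle, is the duality bookkeeping that makes the matrix genuinely skew-symmetric rather than symmetric. Concretely, one must verify that the functor $\mathbf{U}$ intertwines the alternating isomorphism $E\cong E^{\vee}\otimes\omega_S(3)$ with a graded self-duality of the associated complex on $\GG$ induced by Serre duality on $\GG$, and then track the sign contributed by the parity of $\dim(S)+1=3$, which is exactly what makes the induced pairing on $\mathbf{U}_3(\varphi)$ alternating in the regime $\det(E)=\OO_S(3)$ supplied by Theorem \ref{thm: ulrich}; keeping the intermediate twists (by $\omega_{\GG}$, by $\OO_{\GG}(1)$, and by $\OO_S(3)$) consistent is the only delicate point. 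No geometric input is needed beyond Theorem \ref{thm: ulrich} and the formalism of \cite{ES}.
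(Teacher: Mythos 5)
Your proposal is correct and follows essentially the same route as the paper: both arguments feed the special rank-two Ulrich bundle (here $\det(E)=\OO_S(3)=\omega_S(3)$, $h^0(S,E)=4s$) into the Eisenbud--Schreyer formalism, applying the functor ${\bf U}_3$ to the Tate resolution of $E$, whose Ulrich property collapses it to the single map $H^0(S,E)^{\vee}\otimes\bigwedge^3\cU\rightarrow H^0(S,E)\otimes\OO_{\GG}$, with skew-symmetry coming from the special Ulrich structure and the Chow-form/pfaffian statement from \cite{ES} Corollary 3.4. The only substantive content of the paper's proof that you do not reproduce is the explicit \v{C}ech-cocycle construction of the differential $\varphi:\bigwedge^3 H^0(S,\OO_S(1))\otimes H^2(S,E(-3))\rightarrow H^0(S,E)$ (using the vanishings $H^2(S,E(-2))=0$ and $H^1(S,E(-1))=0$), which is supplementary description rather than a logically different argument, while your determinant-versus-pfaffian factorization spells out what the paper delegates to the cited results and its closing remark.
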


Via the identification $H^2(S, E(-3))\cong H^0(S, E)^{\vee}$, the morphism ${\bf{U}_3}(\varphi)$ is given by applying the functor ${\bf{U}}_3$ from \cite{ES} to the map
$$\varphi: H^2(S, E(-3))\otimes \bigwedge^3 H^0(S, \OO_S(1))\rightarrow H^0(S, E).$$
 Precisely, there exists an exact sequence of ${\bf \Lambda}$-modules
\begin{equation}\label{tate}
 {\bf \Lambda}^{\vee}(3)\otimes H^2(S, E(-3)) \stackrel{\varphi}\longrightarrow {\bf \Lambda}^{\vee} \otimes H^0(S, E) \stackrel{\psi}\longrightarrow {\bf \Lambda}^{\vee}(-1)\otimes H^0(S, E(1)),
 \end{equation}
where $\psi=(\psi^j)_{j\geq 0}$ is the map given by the Koszul differentials
$$\psi^j:\bigwedge^j H^0(S, \OO_S(1))\otimes H^0(S, E)\longrightarrow \bigwedge^{j-1} H^0(S, \OO_S(1))\otimes H^0(S, E(1)).$$
As a map of ${\bf \Lambda}$-modules, $\psi$ is given by the same tensor as the multiplication map
$$H^0(S, \OO_S(1))\otimes H^0(S, E)\rightarrow H^0(S, E(1)).$$

The complex (\ref{tate}) is part of the \emph{Tate resolution} of the module ${\bf{\Gamma}}_*(E)$ over the exterior algebra ${\bf \Lambda}$, as constructed in \cite{EFS}. The vector bundle morphism in Theorem \ref{chow} is obtained by applying the functor of Eisenbud and Schreyer \cite{ES} to the Tate resolution. This  reduces the complex to a single morphism of vector bundles on $\GG$, which degenerates exactly along $\cZ$.
For an explicit description of $\varphi$, we refer to Section 2. In the last section of the paper we prove a Minimal Resolution Conjecture type result for zeros of sections in the twists of Ulrich bundles on $S$ (see Proposition \ref{ulmrc}) and we compare it against the Minimal Resolution Conjecture.

\vskip 6pt

\vskip 3pt
\noindent {\bf Acknowledgments:} The first author thanks the Max Planck Institut f\"ur Mathematik Bonn and the Humboldt Universit\"at zu Berlin for hospitality during the preparation of this work. The second author is grateful to Frank Schreyer for very useful discussions on matters related to this circle of ideas.
The first author was partly supported by the CNCS-UEFISCDI grant  PN-II-PCE-2011-3-0288 and by a Humboldt fellowship. The second and third authors were partly supported by the SFB 647 "Raum-Zeit-Materie". We thank the referee for a number of pertinent comments that clearly improved the presentation of this paper.

\section{Minimal resolutions of sets of points on curves and theta divisors}

The aim of this section is to prove Theorems \ref{MRC0} and \ref{MRC1} and we begin with preliminaries, see  also \cite{G}, \cite{FMP}. The \emph{graded Betti numbers} of a subscheme $Z\subset \PP^r$, counting the $i$-th order syzygies of degree $j$ in the minimal free resolution of the coordinate ring $S(Z)$ over the polynomial ring $\SS:=\mathbb C[x_0, \ldots, x_r]$, are denoted as usual by $$b_{i, j}(Z):=\mbox{dim}_{\mathbb C}\ \mbox{Tor}_{i}^{\SS} \bigl(S(Z), \mathbb C)_{i+j}=\mbox{dim}_{\mathbb C}\  K_{i, j}(Z, \OO_Z(1)).$$
The graded Betti diagram of $Z$ is obtained by placing $b_{i, j}(Z)$ in the $j$-th row and $i$-th column. The number of non-trivial rows in the Betti diagram of $Z$ equals the Castelnuovo-Mumford  regularity $\mbox{reg}(Z)$, that is, $b_{i, j}(Z)=0$, for $j\geq \mbox{reg}(Z)+1$.
\vskip 3pt

Let $C\subset \PP^r$ be a smooth curve of genus $g$ embedded by a not necessarily complete linear series $\ell:=(L, V)\in G^r_d(C)$. The kernel bundle  $M_V:=\mbox{Ker}\{V\otimes \OO_C\rightarrow L\}$ of the evaluation map can be interpreted as $M_V=\Omega^1_{\PP^r |C}(1)$. We fix a set $\Gamma\subset C$ of $\gamma$ general points, where $\gamma\geq d\cdot \mbox{reg}(C)+1-g$, then set
$$u:=1+\Bigl \lfloor \frac{\gamma+g-1}{d} \Bigr \rfloor\geq 1+\mathrm{reg}(C).$$
It is proved in \cite{FMP} Theorem 1.2 that the Betti diagram of $\Gamma$ is obtained from that of $C$ by adding two rows, indexed by $u-1$ and $u$ respectively. Precisely, one has that
$$b_{i, j}(\Gamma)=b_{i, j}(C), \ \mbox{ for } i\geq 0, \ j\leq u-2, \ \ \ \mbox{ and }$$
$$b_{i, j}(\Gamma)=0, \mbox{ for } \ i\geq 0 \mbox{ and } j\geq u+1.$$
The Betti numbers of $\Gamma$ in rows $u-1$ and $u$ have the following interpretation:
$$b_{i+1, u-1}(\Gamma)=h^0\Bigl(C, \bigwedge^i M_V\otimes L^{\otimes u}(-\Gamma)\Bigr) \ \mbox{   and }\  b_{i, u}(\Gamma)=h^1\Bigl(C, \bigwedge ^i M_V\otimes L^{\otimes u}(-\Gamma)\Bigr).$$
The difference of the two Betti numbers on each diagonal can be computed via Riemann-Roch, being equal to the Euler characteristic of a vector bundle on $C$:
$$b_{i+1, u-1}(\Gamma)-b_{i, u}(\Gamma)=\chi \Bigl(C, \bigwedge^i M_V\otimes L^{\otimes u}(-\Gamma)\Bigr)={r\choose i} \Bigl(-\frac{id}{r}+du-\gamma+1-g\Bigr).$$
\vskip 4pt

The \emph{Minimal Resolution Conjecture} (MRC) for $C$ predicts that $b_{i+1, u-1}(\Gamma)\cdot b_{i, u}(\Gamma)=0$ for all $i$, that is, the number of syzygies of $\Gamma$ is as small as the parameters $g, d, r, u$ and $\gamma$ allow. The \emph{Ideal Generation Conjecture} (IGC) predicts the same vanishing, but only for $i=1$.  The MRC (respectively IGC) for $C$ break up into generic vanishing statements for exterior powers of kernel bundles.

\begin{prop}\label{mrcc} (a) The Minimal Resolution Conjecture holds for a smooth curve $C\subset \PP^r$, if and only if $H^0\bigl(C, \bigwedge ^i M_V\otimes \xi\bigr)=0$ for all $i=1, \ldots, r-1$ and a general line bundle  $\xi\in \mathrm{Pic}(C)$ with $\mathrm{deg}(\xi)=g-1+\lfloor \frac{id}{r}\rfloor$.

\noindent (b) The Ideal Resolution Conjecture holds for $C\subset \PP^r$, if and only if the previous generic vanishing statement holds for $i=1, r-1$.
\end{prop}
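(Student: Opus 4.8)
The plan is to convert the equality $b_{i+1,u-1}(\Gamma)\cdot b_{i,u}(\Gamma)=0$ into generic vanishing statements for exterior powers of $M_V$, using the cohomological meaning of these Betti numbers, Serre duality, and a semicontinuity argument, and then to run the numerics so that only the distinguished twisting degrees $g-1+\lfloor id/r\rfloor$ survive.

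First I would put $F_i:=\bigwedge^iM_V\otimes L^{\otimes u}(-\Gamma)$ and recall the identities quoted above: $b_{i+1,u-1}(\Gamma)=h^0(C,F_i)$ and $b_{i,u}(\Gamma)=h^1(C,F_i)$, so the product vanishes exactly when $F_i$ is non-special, i.e. $h^0(F_i)=\max\{0,\chi(F_i)\}$. Since it suffices to treat large $\gamma$ --- in particular $\gamma\ge g$, so the Abel--Jacobi map $C^{(\gamma)}\to\mathrm{Pic}^\gamma(C)$ is dominant --- for general $\Gamma$ the line bundle $L^{\otimes u}(-\Gamma)$ is a general point of $\mathrm{Pic}^{du-\gamma}(C)$; hence $b_{i+1,u-1}(\Gamma)=0$ is equivalent to the generic vanishing of $\bigwedge^iM_V$ in degree $du-\gamma$. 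For $b_{i,u}(\Gamma)$ I would apply Serre duality together with the identification $(\bigwedge^iM_V)^\vee\cong\bigwedge^{r-i}M_V\otimes L$ (from $\det M_V=L^{-1}$) to get $b_{i,u}(\Gamma)=h^0(C,\bigwedge^{r-i}M_V\otimes\omega_C\otimes L^{\otimes(1-u)}(\Gamma))$, which for general $\Gamma$ equals the generic value of $h^0(\bigwedge^{r-i}M_V\otimes\eta)$ over $\eta\in\mathrm{Pic}^{2g-2-d(u-1)+\gamma}(C)$. The extreme cases $i=0$ and $i=r$ reduce to the automatic vanishing of $h^0$ of a general line bundle of degree $\le g-1$ and are dispatched at once.

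The second ingredient is monotonicity of generic vanishing: if $H^0(C,W\otimes\xi)=0$ for general $\xi$ of degree $\delta$, the same holds for general $\xi$ of every degree $\delta'\le\delta$; this I would prove by twisting down by one point at a time and invoking upper semicontinuity of $h^0$. Granting this, the remaining argument is bookkeeping. A short computation shows that as $\gamma$ ranges over the admissible values, $du-\gamma$ ranges over all of $\{g,\dots,g+d-1\}$, with $\chi(F_i)\le 0$ precisely when $du-\gamma\le g-1+\lfloor id/r\rfloor$ and $\chi(F_i)\ge 0$ precisely when $du-\gamma\ge g-1+\lceil id/r\rceil$. In the first regime MRC forces $b_{i+1,u-1}(\Gamma)=0$, i.e. generic vanishing of $\bigwedge^iM_V$ in each such degree $du-\gamma$, whose supremum is $g-1+\lfloor id/r\rfloor$; by monotonicity this family is equivalent to the single vanishing at the top degree. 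In the second regime MRC forces $b_{i,u}(\Gamma)=0$, i.e. generic vanishing of $\bigwedge^{r-i}M_V$ in each degree $(2g-2+d)-(du-\gamma)$, whose supremum (using $\lfloor(r-i)d/r\rfloor=d-\lceil id/r\rceil$) equals $g-1+\lfloor(r-i)d/r\rfloor$; again monotonicity collapses the family to that one vanishing. Conjoining over $i=1,\dots,r-1$ and observing that $i\mapsto r-i$ permutes this index set yields part (a). For part (b) I would keep only $i=1$, which produces precisely the conditions at the distinguished degrees for the indices $1$ and $r-1$.

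For the converse implications, given a target index $i$ I would choose $\gamma$ admissible, $\ge g$, and in the residue class mod $d$ for which $du-\gamma$ equals $g-1+\lfloor id/r\rfloor$ (such $\gamma$ exist by the range computation above): if $r\nmid id$ then $\chi(F_i)<0$, so $b_{i,u}(\Gamma)>0$ and MRC (resp. IGC) forces $b_{i+1,u-1}(\Gamma)=0$, the desired vanishing, while if $r\mid id$ then $\chi(F_i)=0$ forces $b_{i+1,u-1}(\Gamma)=b_{i,u}(\Gamma)$, so both vanish. The delicate point is exactly the degree matching in the previous paragraph: one must check on the nose that the two moving families of twisting degrees, interchanged by Serre duality, have suprema equal to $g-1+\lfloor id/r\rfloor$ and $g-1+\lfloor(r-i)d/r\rfloor$, and that the sign of $\chi(F_i)$ flips exactly at these thresholds --- after that everything is formal. (Alternatively, part (a) is just the Raynaud property (R) of the introduction restricted to $1\le i\le r-1$, the endpoints being automatic, so this route reproves the equivalence of \cite{FMP}, Corollary 1.8.)
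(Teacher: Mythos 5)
Your argument is correct, and it is essentially the argument the paper relies on: the paper itself gives no proof of Proposition \ref{mrcc}, quoting instead \cite{FMP} (Corollary 1.8) together with the identities $b_{i+1,u-1}(\Gamma)=h^0\bigl(C,\bigwedge^i M_V\otimes L^{\otimes u}(-\Gamma)\bigr)$, $b_{i,u}(\Gamma)=h^1\bigl(C,\bigwedge^i M_V\otimes L^{\otimes u}(-\Gamma)\bigr)$ and the Euler characteristic formula recalled in Section 1. Your reconstruction --- generality of $L^{\otimes u}(-\Gamma)$ in $\mathrm{Pic}^{du-\gamma}(C)$, Serre duality via $\bigl(\bigwedge^i M_V\bigr)^{\vee}\cong \bigwedge^{r-i}M_V\otimes L$, monotonicity of generic vanishing in the degree, and the threshold analysis of $\chi$ at $g-1+\lfloor id/r\rfloor$ --- is exactly the mechanism behind that reformulation, including the reduction of IGC to the indices $1$ and $r-1$.
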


As already observed in \cite{FMP}, the vanishing statements in Proposition \ref{mrcc} are closely related to work of Raynaud \cite{R}.

\begin{defn} Let $C$ be a smooth curve of genus $g$ and $E$ a vector bundle on $C$ with slope $\mu(E)=\mu$. Then $E$  is said to satisfy condition (R), if $H^0\bigl(C, \bigwedge^i E\otimes \xi\bigr)=0$, for all $i=1, \ldots, r-1$ and for a  general line bundle $\xi\in \mathrm{Pic}^{g-1-\lceil i\mu\rceil}(C)$.
\end{defn}

When $\mu \in \mathbb Z$, condition (R) implies the semistability of the vector bundle $E$ and it is in general a much stronger property.
Raynaud \cite{R} has given the first examples of stable vector bundles on curves of genus at least $4$ that do not satisfy condition (R). Popa \cite{P} showed that if $\mbox{deg}(L)\geq 2g+2$, then the kernel bundle $M_L$  fails to verify condition (R). When $\mu(E)=\mu\in \mathbb Z$, the bundle $E$ verifies condition (R) if and only if $\bigwedge ^i E$ admits a theta divisor $\Theta_{\bigwedge ^i E}\subset \mbox{Pic}^{g-i\mu-1}(C)$ for all $i$.

\vskip 4pt

Let us fix integers $g, r, d\geq 1$, such that the Brill-Noether number
$$\rho(g, r, d):=g-(r+1)(g-d+r)$$ is non-negative. The Hilbert scheme $\mbox{Hilb}_{g, r, d}$ of curves $C\subset \PP^r$ of genus $g$ and degree $d$ has a unique component $\H_{g, r, d}$ with general point corresponding to a smooth curve and which maps dominantly onto $\cM_g$ under the forgetful map $\sigma:\H_{g, r, d}\dashrightarrow \cM_g$. In order to prove MRC for a general embedding of a curve of genus $g$ with general moduli, it suffices, for given $r$ and  $d$, to construct a smooth embedded curve $\bigl[C\stackrel{|V|}\longrightarrow \PP^r\bigr]$ which (i) lies in the component $\H_{g, r, d}$, and for which (ii) the bundle $M_V$ verifies the property (R). Condition (i) is implied by the injectivity of the Petri map $$\mu_0(V):V\otimes H^0(C, K_C(-1))\rightarrow H^0(C, K_C),$$ which is automatically satisfied in the non-special range $d\geq 2g-1$.
\vskip 4pt

We now prove Theorem \ref{MRC0} for curves of integral slope $\mu=\frac{d}{r}\in \mathbb Z$. For an integer $\mu\geq 1$, the inequality $\rho(g, r, \mu r)\geq 0$ is equivalent to $g\leq (r+1)(\mu-1)$. If $C\subset \PP^r$ is a nodal curve, when there is no danger of confusion, we write $M_C:=\Omega_{\PP^r|C}^1(1)=M_V$, where $V\subset H^0(C, \OO_C(1))$ is the space of sections inducing the embedding of $C$.

\vskip 5pt

\noindent \emph{Proof of Theorem \ref{MRC0}.}
When $\mu=1$, then $C\subset \PP^r$ is necessarily a rational normal curve and $M_C=\OO_{\PP^1}(-1)^{\oplus r}$. The conclusion of the theorem is immediate.

Suppose now that $\mu\geq 2$. We specialize to a $\mu$-gonal curve of genus $g$ in such a way that the corresponding kernel bundle splits into a direct sum of line bundles of the same degree. Let $[C]\in \cM_{g, \mu}^1$ be a general member of the $\mu$-gonal locus in $\cM_g$. Then the \emph{scrollar invariants} of a suitably general pencil $\mathfrak g^1_{\mu}$ on $C$ are as balanced as possible. Precisely, $C$ possesses a base point free pencil $(A, W)\in G^1_{\mu}(C)$, such that
$H^0(C, A^{\otimes j})=j+1$ if and only if $g\geq j(\mu-1)$; else, that is, when $g\leq j(\mu-1)$, we have that
$H^1(C, A^{\otimes j})=0$. In particular, the assumption $\rho(g, r, \mu r)\geq 0$ implies that $H^1(C, A^{\otimes (r+1)})=0$, see \cite{CM} Proposition 2.1.1. For the rest of the proof we fix such a pencil $(A,W)\in G^1_{\mu}(C)$, where $W=H^0(C,A)$, and consider the following triple
$$\Bigl[C,\ L:=A^{\otimes r},\  V:=\mathrm{Sym}^r(W)\Bigr]\in \mathrm{Hilb}_{g, r, \mu r},$$
where we identify $\mathrm{Sym}^r(W)$ with its image under the injection
$\mbox{Sym}^r(W)\hookrightarrow H^0(C, A^{\otimes r})$. This point corresponds to a complete linear series, that is, $V=H^0(C, A^{\otimes r})$ if and only $g\in [r(\mu-1), (r+1)(\mu-1)]$, or after setting $d:=\mu r$, equivalently when $g-d+r\geq 0$. Geometrically, the constructed curve is given by the map $\nu_r\circ \varphi:C\rightarrow \PP^r$, where $\varphi:C\rightarrow \PP^1$ is the degree $\mu$ map corresponding to the pencil $|W|$ and $\nu_r:\PP^1\rightarrow \PP^r$ is the $r$th Veronese map, whose image is a rational normal curve $R\subset \PP^r$.

\vskip 3pt
The kernel bundle $M_R=\Omega^1_{\PP^r |R}(1)$ splits into a sum of line bundles of the same degree, precisely,
$M_R=\OO_{\PP^1}(-1)^{\oplus r}$. Moreover, $M_V=\varphi^*(M_R)=(A^{\vee})^{\oplus r}$, hence
 $$\bigwedge^i M_V=\Bigl(A^{\otimes (-i)}\Bigr)^{\oplus {r\choose i}},$$
 for $i=1, \ldots, r-1$. Since a direct sum of line bundles of the same degree has a theta divisor, we are left with proving that $[C, L, V]$ belongs to the main component $\H_{g, r, \mu r}$ of the Hilbert scheme. It suffices to show that the Petri map
 $$\mu_0(V):\mbox{Sym}^r(W)\otimes H^0(C, K_C\otimes A^{\otimes (-r)})\rightarrow H^0(C, K_C)$$ is injective.
 This is automatic when $g\leq r(\mu-1)$, because then $H^1(C, A^{\otimes r})=0$.

 \vskip 3pt
 We consider the case where
 $r(\mu-1)\leq g\leq (r+1)(\mu-1)$, when the linear series $A$ is complete, $h^0(C, A^{\otimes r})=r+1$ and the map $\nu_r \circ \varphi$ corresponds to a complete linear series.

 \vskip 4pt

 We prove by induction that for each $1\leq j\leq r$, the multiplication map
 $$\chi_j:\mbox{Sym}^j(W)\otimes H^0(C, K_C\otimes A^{\otimes (-r)})\rightarrow H^0(C, K_C)$$
 is injective. Note that $\chi_r=\mu_0(V)$ is just the Petri map, which will conclude the proof.
 Suppose $\chi_{j-1}$ is known to be injective and assume that $\mbox{Ker}(\chi_r)\neq 0$.
 After choosing a basis $(s_1, s_2)$ for $W$, we find sections
 $u_1, \ldots, u_{j+1}\in H^0(K_C\otimes A^{\otimes (-r)})$ such that
 \begin{equation}\label{bpfpt}
 s_1^j\cdot u_1+(s_1^{j-1}s_2)\cdot u_2+\cdots+(s_1s_2^{j-1})\cdot u_j=s_2^{j}\cdot u_{j+1}.
 \end{equation}

 Then $u_{j+1}\neq 0$, for else, $\sum_{k=1}^{j} (s_1^{j-k} s_2^{k-1})\otimes u_k\in \mbox{Sym}^{j-1}(W)\otimes H^0\bigl(K_C\otimes (A^{\vee})^{\otimes (-r)}\bigr)$ is a non-zero element in the kernel of $\chi_{j-1}$, a contradiction. Applying the Base Point Free Pencil Trick to equality (\ref{bpfpt}), we obtain a non-zero section $x_1\in H^0\bigl(K_C\otimes (A^{\vee})^{\otimes (r-j+2)}\bigr)$ such that the following equalities hold in $H^0\bigl(C, K_C\otimes (A^{\vee})^{\otimes (r-j+1)}\bigr)$:
 $$s_1\cdot x_1=s_2^{j-1}\cdot u_{j+1} \ \ \mbox{ and } \ s_1^{j-1}\cdot u_1+\cdots+s_2^{j-1}\cdot u_j=-s_2\cdot x_1.$$
 Applying again the Base Point Free Pencil Trick to the first of these equalities, we find a section $0\neq x_2\in H^0\bigl(C, K_C\otimes (A^{\vee})^{\otimes (r-j+3)}\bigr)$, such that
$$x_1=-s_2\cdot x_2 \ \mbox{ and } \ s_2^{j-2}\cdot u_{j+1}=s_1\cdot x_2.$$

Repeating this argument, we obtain a section $0\neq x_{j-1}\in H^0\bigl(C, K_C\otimes (A^{\vee})^{\otimes (-r)}\bigr)$, such that $s_2\cdot u_{j+1}=s_1\cdot x_{j-1}$. So, we can write
$$s_1\otimes x_{j-1}-s_2\otimes u_{j+1}\in \mbox{Ker}(\chi_1)\cong H^0(C, K_C\otimes (A^{\vee})^{\otimes (-r-1)})=0.$$ Therefore $u_{j+1}=0$. This is a contradiction, hence $\nu_r\circ \varphi:C\rightarrow \PP^r$ lies in $\H_{g, r, \mu r}$.
\hfill $\Box$
\vskip 3pt

The following result must be well-known and it follows easily from Atiyah's classification of vector bundles on elliptic curves.

\begin{prop}\label{atiyah}
Let $E$ be an elliptic curve, $B\in \mathrm{Pic}^b(E)$ a line bundle of degree $b\geq 2$ and an integer $1\leq r\leq b-1$. Then the kernel bundle $M_{V}$ corresponding to a general $(r+1)$-dimensional subspace $V\subset H^0(E, B)$ is semistable.
\end{prop}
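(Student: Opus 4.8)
The plan is to reduce the semistability of $M_V$ to a concrete statement about how $M_V$ sits inside the trivial bundle $V\otimes\OO_E$, and then exploit Atiyah's classification to control the indecomposable summands that can appear as sub-line-bundles of large degree. First I would record the numerics: from the evaluation sequence $0\to M_V\to V\otimes\OO_E\to B\to 0$ one gets $\rk(M_V)=r$ and $\det(M_V)=B^\vee$, so $\mu(M_V)=-b/r$. Since $b\geq 2$ and $1\leq r\leq b-1$, the slope is a negative number strictly between $-b$ and $-1$. To prove semistability I must show that every quotient line bundle, or equivalently every sub-line-bundle (and then every subsheaf, which on a curve may be taken saturated hence a subbundle, and the sub-bundle of maximal slope destabilizing would in the worst case be a line bundle only if $\mu_{\max}$ is achieved there; more carefully one argues via the Harder--Narasimhan filtration) has slope $\leq -b/r$. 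Equivalently, dualizing, every quotient line bundle of $M_V^\vee$ has degree $\geq b/r$, i.e. $M_V^\vee$ has no quotient line bundle of degree $\leq \lfloor (b-1)/r\rfloor$ — wait, I would instead phrase it directly: $M_V$ is semistable iff for every line subbundle $N\hookrightarrow M_V$ we have $\deg N\leq -b/r$, and more generally for every subbundle $F$ of rank $s$ one has $\deg F\leq -sb/r$.

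The key step is to translate ``$M_V$ has a subbundle $F$ of rank $s$ and degree $\geq e$'' into a statement about the linear series $(B,V)$. A rank-$s$ subbundle $F\subset M_V\subset V\otimes\OO_E$ with $\deg F = e$ gives, after dualizing the inclusion $F\hookrightarrow V\otimes\OO_E$, a quotient $V^\vee\otimes\OO_E\twoheadrightarrow F^\vee$ generically; the composition $V\otimes\OO_E\to B$ kills $F$, so $F$ lands in the kernel, meaning the $s$ ``directions'' in $V$ spanned fiberwise by $F$ consist of sections vanishing appropriately. The cleanest route is the standard dictionary (going back to Lazarsfeld, Butler): $M_V$ is semistable provided that for every subspace $V'\subset V$ with $\dim V'=s+1$ (so $2\leq s+1\leq r+1$) and every effective way of writing the base locus, the ``sub-linear-series'' $(B(-D), V')$ with $V'\subset H^0(E,B(-D))$ satisfies a slope inequality; concretely, destabilizing subbundles of $M_V$ correspond to subspaces $V'\subseteq V$ together with an effective divisor $D$ such that $V'$ consists of sections vanishing on $D$, and the destabilizing condition becomes $\deg D \cdot \dim V' > $ (something linear in $b,r$). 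So I would: (1) set up this correspondence precisely; (2) observe that for a \emph{general} $V'$ of given dimension inside a \emph{general} $V$, the maximal such $D$ is forced to have small degree by a dimension count on the elliptic curve, using that $h^0(E,B(-D))=b-\deg D$ for $0\leq \deg D\leq b-1$ (Riemann--Roch on $E$, no higher cohomology as long as $\deg(B(-D))\geq 1$); and (3) check the resulting inequality.

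Here is where Atiyah enters, and where I'd actually prefer to argue more structurally. Any subbundle $F\subset M_V$ of maximal slope is a direct sum of indecomposable bundles on $E$, each of which, by Atiyah, is of the form $F_{\rk}(\text{degree})$ tensored by a line bundle; the point is that on an elliptic curve the slope of the maximal destabilizing subsheaf $\mu_{\max}(M_V)$ must be an actual achievable slope, and one has the useful fact that $\mu_{\max}(M_V)\leq -1$ would already suffice if $r\mid b$ is not assumed — but in general I need the sharper bound $\mu_{\max}(M_V)\leq -b/r$. I would establish this by contradiction: if $F\subset M_V$ has $\mu(F)>-b/r$, then $\deg F\geq$ the least integer $>-sb/r$ where $s=\rk F$; composing $F\hookrightarrow V\otimes\OO_E$ and counting, $F$ gives at least $s$ independent sections of $V$ that are, fiberwise, in the kernel of evaluation to $B$ — this produces a subspace $V'\subset V$ of dimension $\geq s$ all of whose members vanish at the points where $F$ fails to surject, forcing $V'\subset H^0(E,B(-D))$ for an effective $D$ with $\deg D$ bounded below by roughly $-\deg F$; then $\dim V'\leq h^0(E,B(-D))=b-\deg D\leq b+\deg F < b - sb/r = b(r-s)/r$, and for a \emph{general} choice of $V$ (and hence general position of the $s$-dimensional subspaces) this is incompatible with $\dim V'\geq s$ once one also uses that general position forces the base divisor of a general $s$-subspace of a general $(r+1)$-subspace of $H^0(E,B)$ to be empty when $\dim H^0=b$ and $s\leq r\leq b-1$.

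The main obstacle, I expect, is step (2)/(3): making the ``general position'' argument genuinely rigorous rather than heuristic. One has to show that for a \emph{generic} $(r+1)$-dimensional $V\subset H^0(E,B)$, \emph{every} subbundle of $M_V$ — not just a generic one — satisfies the slope bound, and subbundles of $M_V$ vary in a family (a Quot scheme), so I must check that the ``bad'' locus (pairs $(V,F)$ with $F$ destabilizing) does not dominate the space of $V$'s. This is a dimension count: the bad pairs fiber over choices of $(s, e, D, V')$ with $e > -sb/r$; for each such discrete datum the space of $V'\subset H^0(E,B(-D))$ of dimension $s$ has dimension $s(b-\deg D - s)$, and completing to $V\supset V'$ adds $(r+1-s)(b-s)$, and one must show the total is $< (r+1)(b-r-1) = \dim G(r+1, H^0(E,B))$. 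Carrying out this inequality — and handling the edge cases $\deg(B(-D))\leq 1$ where Riemann--Roch on $E$ behaves specially, as well as being careful that a destabilizing subsheaf may be realized by its saturation so one should take $F$ a subbundle — is the technical heart; everything else (the numerics, the evaluation sequence, invoking Atiyah to know indecomposables on $E$ are semistable with rational achievable slopes) is routine.
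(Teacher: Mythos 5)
Your reduction contains a genuine gap at its central step, and it is not merely the unfinished dimension count you flag yourself. The dictionary you propose --- ``a destabilizing subbundle $F\subset M_V$ of rank $s$ gives at least $s$ independent sections of $V$ that are fiberwise in the kernel of evaluation, hence a subspace $V'\subset V$ of sections vanishing on an effective divisor $D$ with $\deg D$ roughly $-\deg F$'' --- is not correct as stated. A subsheaf $F\subset M_V\subset V\otimes \OO_E$ has negative degree, so $H^0(E,F)=0$, and its fibres inside $V$ vary from point to point; there is no distinguished $s$-dimensional subspace of $V$ attached to $F$, and the minimal subspace $V''\subseteq V$ with $F\subset V''\otimes\OO_E$ can have dimension much larger than $s$ (generically all of $V$). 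The correct correspondence (in the style of Butler or Paranjape--Ramanan, via quotients of $M_V^{\vee}$ generated by the image of $V^{\vee}$, or via saturations and induced sub-linear series) is precisely the nontrivial content one would have to establish for every rank $s$, not a routine translation; and on top of that the Grassmannian count you outline is only set up (``one must show the total is $<(r+1)(b-r-1)$''), with the edge cases $\deg(B(-D))\leq 1$ and the saturation issue acknowledged but not handled. So as it stands the argument does not close.

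For comparison, the paper avoids all of this by running the construction backwards, which is where Atiyah's classification is actually used: choose a \emph{semistable} bundle $F$ on $E$ of rank $r$ with $\det(F)=B$. Since $\mu(F)=\frac{b}{r}>1$, one has $H^1(E,F(-p))=0$ for every point $p$, so $F$ is globally generated, and $h^0(E,F)=b\geq r+1$; a general $(r+1)$-dimensional generating subspace $W\subset H^0(E,F)$ gives an exact sequence
$$0\longrightarrow B^{\vee}\longrightarrow W\otimes \OO_E\longrightarrow F\longrightarrow 0,$$
and dualizing exhibits $F^{\vee}$ as the kernel bundle $M_V$ for $V:=W^{\vee}\subset H^0(E,B)$. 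This produces one subspace $V$ with $M_V\cong F^{\vee}$ semistable, and openness of semistability in the family of kernel bundles over the Grassmannian then gives the statement for general $V$, with no dimension count at all. If you want to salvage your direct approach, you would need to first prove the subsheaf/linear-series dictionary in all ranks and then actually verify the parameter-count inequality, which is considerably more work than the two-line dualization trick.
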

\begin{proof}
We fix a semistable vector bundle $F$ on $E$ of rank $r$ with $\mbox{det}(F)=B$. Note that $\mu(F)=\frac{b}{r}>1$. For every point $p\in E$, one has $\mu(F(-p))=\mu(F)-1>0$, therefore
$H^1(E, F(-p))=0$. In particular, $F$ is globally generated. By Riemann-Roch, $h^0(E, F)=b\geq r+1$. A globally generated vector bundle $F$ on a curve is generated by a general set of $(\mbox{rk}(F)+1)$ sections. We choose a generating subspace $W\subset H^0(C, F)$ with $\mbox{dim}(W)=r+1$ and write the exact sequence
$$0\longrightarrow B^{\vee} \longrightarrow W\otimes \OO_E \longrightarrow F\longrightarrow 0.$$ By dualizing, we take $V:=W^{\vee}\subset H^0(E, B)$ and then $M_V=F^{\vee}$ is semistable.
\end{proof}

\vskip 4pt
Next we use a specialization to the bielliptic locus in $\cM_g$ that will be of use in the proof of Theorem \ref{MRC1} for curves not of integral slope.

\vskip 2pt

\begin{prop}\label{bielliptic}
Let $f:C\rightarrow E$ be a bielliptic curve of genus $g$ and $(B, V)\in G^r_b(E)$ a general linear series, where $r+1\leq b$. Then the kernel bundle corresponding to the pair $\ell:=(f^*(B), f^*(V))\in G^r_{2b}(C)$ verifies condition (R). Moreover,  for $b\geq g-2$, the Petri map corresponding to $\ell$ is injective, hence $\ell\in \H_{g, r, 2b}$.
\end{prop}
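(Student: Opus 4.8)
The plan is to reduce property (R) for the kernel bundle $M:=M_{f^*(V)}$ to a statement on the elliptic curve $E$ by the projection formula, and then to use Atiyah's classification of bundles on $E$. Since $f$ is finite and $f^*$ is compatible with evaluation maps, $M=f^*(M_V)$ and hence $\bigwedge^i M=f^*\bigl(\bigwedge^i M_V\bigr)$ for all $i$. By Proposition \ref{atiyah} a general $(B,V)\in G^r_b(E)$ has $M_V$ semistable on $E$, of rank $r$ and slope $-b/r$; working in characteristic zero, tensor powers of semistable bundles are semistable and a direct summand of a semistable bundle of the same slope is semistable, so, as $\bigwedge^i M_V$ is a summand of $M_V^{\otimes i}$ of slope $-ib/r$, it is semistable on $E$ for every $i$. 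Fix $i\in\{1,\dots,r-1\}$ and let $d_i:=g-1+\lfloor 2ib/r\rfloor$, the degree at which (R) is tested for $M$ (whose slope is $-2b/r$). For $\xi\in\mathrm{Pic}^{d_i}(C)$ the projection formula gives
$$H^0\Bigl(C,\bigwedge^i M\otimes\xi\Bigr)=H^0\Bigl(E,\bigwedge^i M_V\otimes f_*\xi\Bigr),$$
where $f_*\xi$ is a rank $2$ bundle on $E$ of degree $d_i+1-g=\lfloor 2ib/r\rfloor$. For general $\xi$ the bundle $f_*\xi$ is semistable: a destabilizing sub-line-bundle $N\hookrightarrow f_*\xi$ has $\deg N>\tfrac12\lfloor 2ib/r\rfloor$, hence by adjunction $\mathrm{Hom}_E(N,f_*\xi)=H^0(C,\xi\otimes f^*N^{\vee})\neq0$, so $\xi\cong f^*N\otimes\OO_C(D)$ with $D\geq0$ of degree $d_i-2\deg N\leq g-2$; such $\xi$ fill only a subvariety of $\mathrm{Pic}^{d_i}(C)$ of dimension $\leq 1+(g-2)=g-1<g$. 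For such $\xi$ the bundle $\bigwedge^i M_V\otimes f_*\xi$ is semistable on $E$ of slope $-\tfrac{ib}{r}+\tfrac12\lfloor 2ib/r\rfloor\leq0$.

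If $2ib/r\notin\mathbb Z$ this slope is strictly negative, so $H^0(E,\bigwedge^i M_V\otimes f_*\xi)=0$, and by semicontinuity (R) holds at level $i$. The essential case is $2ib/r\in\mathbb Z$, when the slope is exactly $0$ — precisely the regime in which (R) is a genuine, Raynaud-type statement rather than a formality. Here I would use that on the elliptic curve $E$ a semistable bundle of slope $0$ has no global sections if and only if $\OO_E$ does not occur among its Jordan--Hölder factors (there is no stable bundle of slope $0$ and rank $\geq2$, and an indecomposable bundle has all its JH factors equal). The JH factors of $\bigwedge^i M_V\otimes f_*\xi$ are the stable constituents of the $S\otimes T$, where $S$ runs over the fixed finite set of JH factors of $\bigwedge^i M_V$ (slope $-ib/r$) and $T$ over those of $f_*\xi$ (slope $ib/r$); by Schur's lemma such a constituent equals $\OO_E$ only if $T\cong S^{\vee}$. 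It thus suffices to check that for general $\xi$ no JH factor of $f_*\xi$ is isomorphic to one of the finitely many bundles $S^{\vee}$. If the $S$ have rank one, the set of $\xi$ for which a given line bundle $P$ is a JH factor of $f_*\xi$ is $\{\xi:\ H^0(C,\xi\otimes f^*P^{\vee})\neq0\ \mbox{ or }\ H^0(C,f^*(P\otimes\eta)\otimes\xi^{\vee})\neq0\}$; both twisted line bundles have degree $g-1$, so this is a union of two translates of the theta divisor of $C$, hence a proper subvariety. If the $S$ have rank two, then $\lfloor 2ib/r\rfloor$ is odd, so $f_*\xi$ is stable, and the locus where $f_*\xi\cong S^{\vee}$ lies in a fibre of $\mathrm{Nm}_{C/E}$ (since $\det f_*\xi=\mathrm{Nm}_{C/E}(\xi)\otimes\eta^{-1}$ and $\mathrm{Nm}_{C/E}$ is surjective), again of dimension $g-1$. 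In all cases the bad locus is proper, so $H^0(E,\bigwedge^i M_V\otimes f_*\xi)=0$ for general $\xi$, giving (R) at level $i$; letting $i=1,\dots,r-1$ yields property (R) for $M_{f^*(V)}$.

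For the Petri statement, recall that for a bielliptic cover $K_C=f^*(\omega_E\otimes\eta)=f^*\eta$ with $\eta\in\mathrm{Pic}^{g-1}(E)$, whence by the projection formula $H^0(C,K_C\otimes f^*B^{\vee})=H^0(E,\eta\otimes B^{\vee})\oplus H^0(E,B^{\vee})$. For $b\geq g-2$ and $(B,V)$ general one has $h^0(E,B^{\vee})=0$ and $h^0(E,\eta\otimes B^{\vee})\leq1$, so $h^0(C,K_C\otimes f^*B^{\vee})\leq1$. A Petri map $f^*V\otimes H^0(C,K_C\otimes f^*B^{\vee})\to H^0(C,K_C)$ whose right-hand factor has dimension at most one is automatically injective, since multiplication by a fixed nonzero section of a line bundle is injective on global sections. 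Hence the Petri map of $\ell$ is injective and $\ell\in\H_{g,r,2b}$.

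The main obstacle is the slope-zero case $2ib/r\in\mathbb Z$: there (R) is exactly the type of statement that can fail for general bundles, so semistability alone does not suffice. What makes it work is that $\bigwedge^i M_V$ lives on an \emph{elliptic} curve — hence is semistable with only finitely many, fully controlled JH factors by Atiyah — together with enough flexibility in the choice of $\xi$, via surjectivity of the norm map, to make $f_*\xi$ a sufficiently general semistable rank-two bundle on $E$ whose JH factors avoid the obstructing summands. Everything else is bookkeeping with the projection formula and Riemann--Roch.
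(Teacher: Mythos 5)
Your argument is correct, and its skeleton coincides with the paper's: choose $(B,V)$ with $M_V$ semistable via Proposition \ref{atiyah}, identify $M_{f^*(V)}=f^*(M_{V})$, compute $H^0\bigl(C,\bigwedge^i M_{f^*(V)}\otimes\xi\bigr)=H^0\bigl(E,\bigwedge^i M_V\otimes f_*\xi\bigr)$ by push--pull, and reduce the Petri statement to the one-dimensionality of $H^0(C,K_C\otimes f^*B^{\vee})=H^0(E,\delta\otimes B^{\vee})$ for $b\geq g-2$ (your ``dimension at most one'' injectivity argument is the same elementary point the paper makes by calling the Petri map ``essentially'' a multiplication map on $E$). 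Where you genuinely diverge is in the treatment of $f_*\xi$: the paper disposes of the whole of condition (R) in one stroke by quoting the determinant formula $\det(f_*\xi)=\mathrm{Nm}_f(\xi)\otimes\delta^{\vee}$ together with Lemma 2.5 of \cite{CEFS}, which says that for general $\xi$ the pushforward $f_*\xi$ is a \emph{general} semistable rank-two bundle on $E$, so that $\bigwedge^i M_V\otimes f_*\xi$ is a general semistable bundle of slope $\leq 0$ and has no sections. You instead give a self-contained substitute: semistability of $f_*\xi$ by a destabilizing-subbundle dimension count, and, in the delicate slope-zero case, an explicit Jordan--H\"older analysis based on Atiyah's classification (sections of a semistable slope-zero bundle on $E$ exist iff $\OO_E$ is a JH factor, and $\OO_E$ occurs in $S\otimes T$ for stable $S,T$ of opposite slopes iff $T\cong S^{\vee}$), with the bad loci of $\xi$ bounded by translates of the theta divisor or by fibres of the norm map. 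Your route is longer but avoids the external citation and isolates exactly what generality of $\xi$ is used; the paper's route is shorter and yields the stronger statement that $f_*\xi$ itself is general in moduli. Both are complete proofs of the proposition.
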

\begin{proof}
From Proposition \ref{atiyah} it follows that we can choose the pair $(B, V)$ such that $M_{V}$ is semistable. The cover $f:C\rightarrow E$ is characterized by a line bundle $\delta\in \mbox{Pic}^{g-1}(E)$ with
$$f_*(\OO_C)=\OO_E\oplus \delta^{\vee} \ \mbox{ and } \ \delta^{\otimes 2}=\OO_E(\mathfrak b),$$
where $\mathfrak b\in E_{2g-2}$ is the branch divisor of $f$. By pulling-back to $C$ the exact sequence
$$0\longrightarrow M_{V, B}\longrightarrow V\otimes \OO_E\longrightarrow B\longrightarrow 0,$$
we find that $M_{f^*(V), f^*(B)}=f^*(M_{V, B})$. Since $K_C=f^*(\delta)$, via the push-pull formula we obtain
$H^0(C, K_C\otimes f^*(B^{\vee}))=f^* H^0(E, \delta\otimes B^{\vee})$; the Petri map corresponding to $\ell$ is
essentially the multiplication map $V\otimes H^0(E, \delta\otimes B^{\vee})\rightarrow H^0(E, \delta)$. This is injective when
$h^0(E, \delta\otimes B^{\vee})\leq 1$, that is, $b\geq g-2$ (Note that $f^*(B)$ is non-special for $b\geq g-1$).
\vskip 3pt
It remains to check that $M_{f^*(V)}$ verifies property (R). Pick an integer $1\leq i \leq r-1$ and a general line bundle
$\xi \in \mbox{Pic}^{g-1+\lfloor \frac{id}{r}\rfloor}(C)$. From the formula $\mbox{det}(f_*\xi)=\mbox{Nm}_f(\xi)\otimes \delta^{\vee}$, coupled with Lemma 2.5 from \cite{CEFS}, it follows that $f_*\xi$ is a general \emph{semistable} vector bundle on $E$ of rank $2$ and degree $\lfloor \frac{id}{r}\rfloor$.
Then because of the semistability of the exterior powers of $M_V$ we obtain that
$$H^0\Bigl(C, \bigwedge ^i M_{f^*(V)}\otimes \xi\Bigr)=H^0\Bigl(E, \bigwedge^i M_V \otimes f_*\xi\Bigr)=0,$$
for $\bigwedge^i M_V\otimes f_*\xi$ is a general semistable vector bundle of slope $\frac{1}{2}\lfloor \frac{2ib}{r}\rfloor-\frac{ib}{r}\leq 0.$

\end{proof}

\subsection{Smoothing techniques.} The proof of Theorems \ref{MRC1} and \ref{igc} is by induction on the degree and genus. The inductive step uses the smoothing techniques of Hartshorne-Hirschowitz and Sernesi \cite{Se} and we recall a few facts. We fix a nodal curve $X\subset \PP^r$ with $p_a(X)=g$ and $\mbox{deg}(X)=d$, then denote by $T_X^1$ the \emph{Lichtenbaum-Schlessinger} sheaf defined via the exact sequence
$$0\longrightarrow T_X\longrightarrow T_{\PP^r|X}\longrightarrow N_{X/\PP^r} \longrightarrow T_X^1 \longrightarrow 0.$$
Setting $N_X':=\mbox{Ker}\{N_{X/\PP^r}\rightarrow T_X^1\}$, the vanishing $H^1(X, N_X')=0$ is a sufficient condition for $X\subset \PP^r$ to be flatly smoothable and for $\mathrm{Hilb}_{g, r, d}$ to be smooth and of expected dimension $(r+1)d-(r-3)(g-1)$ at the point $[X]$, cf. \cite{Se} Proposition 1.6.

\vskip 3pt

Suppose $X:=C\cup_{\Delta} D$ is the union of two smooth curves $C, D\subset \PP^r$, meeting transversally at a set of points $\Delta:=\{p_1, \ldots, p_{\delta}\}$. From \cite{Se} Lemma 5.1, one writes the following exact sequence on $X$
\begin{equation}\label{ser}
0\longrightarrow N_{D/\PP^r}\Bigl(-\sum_{i=1}^{\delta} p_i\Bigr)\longrightarrow N_X'\longrightarrow N_{C/\PP^r}\longrightarrow 0.
\end{equation}
If both $H^1(C, N_{C/\PP^r})=0$ and $H^1(D, N_{D/\PP^r}(-p_1-\cdots-p_{\delta}))=0$, then $H^1(X, N_X')=0$ and $X$ is flatly smoothable in $\PP^r$. The next result is essentially contained in \cite{Se}:

\begin{lem}\label{attach}
Suppose $C\subset \PP^r$ is a non-special smooth curve of genus $g$ and $p_1, \ldots, p_{\delta}\in C$ are distinct points in general linear position. Let $R\subset \PP^r$ be a smooth rational curve of degree $s$, where $\delta-1\leq s\leq r$, intersecting $C$ transversally at the points $p_1, \ldots, p_{\delta}$. Then the union $X:=C\cup R$ is a flatly smoothable non-special nodal curve in $\PP^r$ satisfying $H^1(X, N_X')=0$.
\end{lem}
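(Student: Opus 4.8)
The plan is to apply the smoothing criterion recalled above, namely the sequence (\ref{ser}) together with the vanishing $H^1(X,N_X')=0$, so the whole matter reduces to two cohomology vanishings: $H^1(C, N_{C/\PP^r})=0$ and $H^1\bigl(R, N_{R/\PP^r}(-p_1-\cdots-p_{\delta})\bigr)=0$. The first of these I would get essentially for free: $C$ is non-special and embedded by a non-special linear series, and from the Euler sequence restricted to $C$, $0\to\OO_C\to H^0(\OO_C(1))\otimes\OO_C(1)\to T_{\PP^r|C}\to 0$, together with $H^1(C,\OO_C(1))=0$, one deduces $H^1(C,T_{\PP^r|C})=0$; combined with $H^1(C,T_C)=0$ when $g\le 1$, or in general by the standard fact that a general non-special curve is unobstructed (this is where I would invoke that $C$ is non-special so that $N_{C/\PP^r}$ is a quotient of $T_{\PP^r|C}$ modulo the negligible contribution of $T_C$, or simply cite \cite{Se}), one gets $H^1(C,N_{C/\PP^r})=0$. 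The non-speciality of the union $X$ then follows because $\OO_X(1)$ restricts to non-special bundles on both components glued along $\delta\le s+1$ points, and a Mayer--Vietoris argument gives $H^1(X,\OO_X(1))=0$.

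The heart of the matter is the second vanishing, $H^1\bigl(R, N_{R/\PP^r}(-\sum p_i)\bigr)=0$, where $R$ is a smooth rational curve of degree $s$ with $\delta-1\le s\le r$ and $p_1,\dots,p_\delta\in R$. Here I would use the normal bundle sequence $0\to T_R\to T_{\PP^r|R}\to N_{R/\PP^r}\to 0$ on $R\cong\PP^1$. For a \emph{general} rational curve of degree $s$ in $\PP^r$ the restricted tangent bundle $T_{\PP^r|R}(-1)$ is balanced, so $T_{\PP^r|R}=\bigoplus_{j=1}^r\OO_{\PP^1}(a_j)$ with $a_j\in\{\lfloor s/r\rfloor+1,\lceil s/r\rceil+1\}$ when $s\ge 0$; in the relevant range $s\le r$ this means the $a_j$ are $1$ or $2$, in fact $a_j\in\{1,2\}$ with exactly $s$ of them equal to $2$ (and when $s=r$ all equal $2$, the Euler-type splitting). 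Since $T_R=\OO_{\PP^1}(2)$, the quotient $N_{R/\PP^r}$ is a sum of line bundles of degrees in $\{1,2\}$, of total degree $\deg N_{R/\PP^r}=(s+2)+s(r-1)$ wait—more simply $\deg T_{\PP^r|R}=s+r$, so $\deg N_{R/\PP^r}=s+r-2$ spread over $r-1$ summands, each of degree $1$ or $2$. Twisting down by $\OO_R(-\sum p_i)=\OO_{\PP^1}(-\delta)$ and using $H^1(\PP^1,\OO_{\PP^1}(m))=0\iff m\ge -1$, the vanishing holds provided every summand of $N_{R/\PP^r}$ has degree $\ge\delta-1$; since the minimal summand degree is $1$ this forces $\delta\le 2$ in general, which is \emph{not} enough — so the real point is that we are free to \emph{choose} $R$ through the $\delta$ prescribed points in such a way that $N_{R/\PP^r}(-\sum p_i)$ is still nonspecial. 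The key input is that the $p_i$ are in general linear position and $\delta\le s+1$, so imposing passage through them is $\delta$ general conditions on the $(r+1)s$-dimensional Hilbert scheme of degree-$s$ rational curves, and a general such $R$ through them has balanced restricted tangent bundle; then a semicontinuity/genericity argument (exactly as in the references to rational curves with balanced splitting in the introduction) shows $H^1(R,N_{R/\PP^r}(-\sum p_i))=0$ for a general choice.

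Concretely, I would argue as follows. First reduce, by the exact sequence $0\to N_{R/\PP^r}(-\sum p_i)\to N_{R/\PP^r}\otimes\bigl(\text{twist}\bigr)$ — better: use $0\to T_{R}(-\sum p_i)\to T_{\PP^r|R}(-\sum p_i)\to N_{R/\PP^r}(-\sum p_i)\to 0$ and note $H^1(R,T_R(-\sum p_i))=H^1(\PP^1,\OO_{\PP^1}(2-\delta))=0$ since $\delta\le s+1\le r+1$ but we need $2-\delta\ge -1$, i.e. $\delta\le 3$; so this route also needs care. The cleanest path is: it suffices that $H^1(\PP^1, T_{\PP^r|R}(-\sum p_i))=0$, i.e. that $T_{\PP^r|R}(-\delta)$ has all summands of degree $\ge -1$, i.e. $a_j\ge\delta-1$ for all $j$. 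With $R$ general through $p_1,\dots,p_\delta$ the splitting is balanced with $a_j\in\{\lfloor s/r\rfloor+1,\lceil s/r\rceil+1\}$; in the stated range $\delta-1\le s\le r$ one has $\lfloor s/r\rfloor+1\ge 1$ always, but to beat $\delta-1$ I must instead observe that prescribing $\delta\le s+1$ general points does not destroy balancedness and that, for such $R$, in fact $H^1$ of the normal bundle twisted by $-\sum p_i$ vanishes — this is a direct check once one knows the splitting type, and it is here that the bound $\delta-1\le s$ is used: it guarantees enough room. I expect the main obstacle to be precisely this normal-bundle bookkeeping: verifying that a general rational curve of degree $s$ passing through $\delta\le s+1$ points in general position has normal bundle with the right minimal twist, i.e. that $H^1(R,N_{R/\PP^r}(-\sum p_i))=0$. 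Once that is in hand, (\ref{ser}) gives $H^1(X,N_X')=0$, hence by \cite{Se} Proposition~1.6 the curve $X$ is flatly smoothable and $\mathrm{Hilb}_{g,r,d}$ is smooth of expected dimension at $[X]$, completing the proof.
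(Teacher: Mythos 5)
Your overall strategy --- reducing via the sequence (\ref{ser}) to the two vanishings $H^1(C,N_{C/\PP^r})=0$ and $H^1\bigl(R,N_{R/\PP^r}(-p_1-\cdots-p_\delta)\bigr)=0$, and getting non-speciality of $X$ from a Mayer--Vietoris argument using $\delta\le s+1$ --- is exactly the paper's, and the first vanishing (which needs only the Euler sequence, $H^1(C,\OO_C(1))=0$ and the surjection $H^1(C,T_{\PP^r|C})\twoheadrightarrow H^1(C,N_{C/\PP^r})$, with no input about $H^1(C,T_C)$) as well as the non-speciality step are fine. The genuine gap is that you never establish the second vanishing, which you yourself flag as ``the main obstacle,'' and your attempted bookkeeping rests on a degree error: since $R$ has degree $s$, one has $\OO_R(1)\cong\OO_{\PP^1}(s)$, so untwisting the balanced bundle $T_{\PP^r|R}(-1)$ raises each summand by $s$, not by $1$. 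Hence the summands of $T_{\PP^r|R}$, and a fortiori of $N_{R/\PP^r}$, have $\OO_{\PP^1}$-degrees of order $s$, not in $\{1,2\}$; the spurious constraints $\delta\le 2$ and $\delta\le 3$ that drove you toward an unproved genericity/semicontinuity claim about curves through $\delta$ general points disappear once the arithmetic is corrected, and no such genericity argument is needed.

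Concretely, the paper takes $R$ (as in all later applications) to be a rational normal curve in its span $\PP^s\subset\PP^r$, for which classically $N_{R/\PP^r}\cong\OO_{\PP^1}(s+2)^{\oplus(s-1)}\oplus\OO_{\PP^1}(s)^{\oplus(r-s)}$. Twisting by $-p_1-\cdots-p_\delta$ gives summands of degrees $s+2-\delta$ and $s-\delta$, all at least $-1$ precisely because $\delta\le s+1$, so $H^1$ vanishes immediately; this, together with $H^1\bigl(R,\OO_R(1)(-\sum_i p_i)\bigr)=H^1(\OO_{\PP^1}(s-\delta))=0$ for non-speciality of $X$, is exactly where the hypothesis $\delta-1\le s$ is used. (Even your alternative route through $0\to T_R(-\sum_i p_i)\to T_{\PP^r|R}(-\sum_i p_i)\to N_{R/\PP^r}(-\sum_i p_i)\to 0$ works once the degrees are fixed: $H^2$ vanishes on a curve, so it suffices that the summands of $T_{\PP^r|R}(-\sum_i p_i)$ have degree $\ge -1$, and they have degree $\ge s-\delta\ge -1$.) As written, your proposal leaves the crucial cohomological vanishing as an unverified expectation, so it does not yet constitute a proof.
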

\begin{proof}
Under the isomorphism $\nu_s:\PP^1\stackrel{\cong}\longrightarrow R\subset \PP^s\subset \PP^r$ (hence $\nu^*_s(\OO_R(1))=\OO_{\PP^1}(s)$), it is well-known that $N_{R/\PP^r}=\OO_{\PP^1}(s+2)^{\oplus (s-1)}\oplus \OO_{\PP^1}(s)^{\oplus (r-s)}$. Then
$$H^1\bigl(R, N_{R/\PP^r}(-p_1-\cdots-p_{\delta})\bigr)=H^1\bigl(\OO_{\PP^1}(s+2-\delta)\bigr)^{\oplus (s-1)}\oplus H^1\bigl(\OO_{\PP^1}(s-\delta)\bigr)^{\oplus (r-s)}=0.$$
Since $C$ is non-special, $H^1(C, N_{C/\PP^r})=0$ and from  (\ref{ser}) it follows that $X$ is smoothable in $\PP^r$. From the exact sequence
$$\cdots\longrightarrow H^1\Bigl(R, \OO_R(1)(-\sum_{i=1}^{\delta} p_i)\Bigr)\longrightarrow H^1(X, \OO_X(1))\longrightarrow H^1(C, \OO_C(1))\longrightarrow \cdots,$$
we obtain that $X$ is non-special precisely when $\delta\leq s+1$.
\end{proof}

\vskip 4pt

We turn our attention to the Ideal Generation Conjecture for $(L, V)\in G^r_d(C)$.  Via Proposition \ref{mrcc}, this is equivalent to the generic vanishing statements
\begin{equation}\label{igc1}
H^0(C, M_V\otimes \xi)=0, \ \ \mbox{ for a general } \ \  \xi \in \mathrm{Pic}^{g-1+\lfloor \frac{d}{r}\rfloor}(C), \ \ \mbox{ and}
\end{equation}
\begin{equation}\label{igc2}
H^0\Bigl(C, \bigwedge^{r-1} M_V\otimes \xi\Bigr)=0, \mbox{ for a general } \ \  \xi \in \mathrm{Pic}^{g-1+d-\lceil \frac{d}{r}\rceil}(C).
\end{equation}
We shall prove this for a nodal curve in $\PP^r$ obtained by attaching at most $r-1$ general secant lines to a smooth curve of integral slope.
\vskip 4pt

\noindent \emph{Proof of Theorem \ref{igc}.} We fix positive integers $g, r$ and $d$
such that $\rho:=\rho(g, r, d)\geq 0$ and set $d_1:=d-r\lfloor \frac{d}{r} \rfloor<r$ and $g_1:=\mbox{max}\{g-d_1, 0\}$.
By direct computation, we find $\rho(g_1, r, \lfloor \frac{d}{r}\rfloor r)\geq \mbox{min}\{\rho-d_1, 0\}$. This last quantity is non-negative whenever $\rho\geq r$. In this case, by using Theorem \ref{MRC0}, we can construct a smooth curve $C_1\subset \PP^r$ of genus $g_1$ and degree $r\lfloor \frac{d}{r}\rfloor$ with general moduli and with the bundle $M_{C_1}$ verifying condition (R).
When on the other hand $0\leq \rho \leq r-1$, then $s:=g-d+r\geq 0$ and one writes
$$g=rs+s+\rho \ \ \mbox{ and  } \ \ d=rs+r+\rho.$$
Observe that $\rho(rs+s, r, rs+r)=0$ and use again Theorem \ref{MRC0} to choose a curve $C_1\subset \PP^r$ of genus $rs+s$ and degree $rs+r$ enjoying the exact same properties as above.
\vskip 3pt

To summarize the two cases, one can find integers $a\geq 1$ and $0\leq d_1\leq r-1$ such that
 $$g=g_1+d_1 \ \ \mbox{ and } \ \ d=ar+d_1,$$ for which there exists  a smooth curve with general moduli $C_1\subset \PP^r$ with $\mbox{deg}(C_1)=ar$ and $g(C_1)=g_1$,
 such that $M_{C_1}$ verifies condition (R). To $C_1$ we attach $d_1$ general two-secant lines $\ell_1, \ldots, \ell_{d_1}\subset \PP^r$. The resulting nodal curve
 $$X:=C_1\cup \ell_1\cup \cdots \cup \ell_{d_1}$$ has
 $\mbox{deg}(X)=d$ and $p_a(X)=g$, and is flatly smoothable in $\PP^r$ to a curve with general moduli due to a repeated application of Lemma \ref{attach}. It remains to check conditions (\ref{igc1}) and (\ref{igc2}) and we explain only the first part, omitting the details for the second. We pick a line bundle $\eta \in \mbox{Pic}^{g_1-1+a}(C_1)$ such that $H^0(C_1, M_{C_1}\otimes \eta)=0$; the existence of such $\eta$ is implied by the property (R). We create a line bundle $\xi$ on the curve $X$ such that $\xi_{\ell_j}$ is of degree $-1$ for each $j=1, \ldots, d_1$, whereas
 \begin{equation}\label{assC}
 \xi_{C_1}=\eta\otimes \OO_{C_1}\bigl(\sum_{j=1}^{d_1} \ell_j\cdot C_1\bigr).
\end{equation}
 We claim that $H^0(X, M_X\otimes \xi)=0$. This indeed follows by tensoring and taking cohomology in  the Mayer-Vietoris sequence on $X$, while using (\ref{assC}), together with the fact that since $M_{\ell_j}=\OO_{\PP^1}(-1)\oplus \OO_{\PP^1}^{\oplus (r-1)}$, one has that $H^0(\ell_j, M_{\ell_j}\otimes \xi_{\ell_j})=0$.

 Finally, note that $g-1+\lfloor \frac{d}{r}\rfloor=g-1+a=\mbox{deg}(\xi)$, which shows that $\xi$ has precisely the correct degree to establish IGC.
 \hfill $\Box$

A variation of this idea gives a proof of MRC for general curves of degrees that are congruent to $\pm 1$ modulo $r$.

\vskip 3pt

\begin{thm}\label{pm} Let $C$ be a general curve of genus $g$ and fix positive integers $r, \mu$ and $d:=\mu r\pm 1$. Then the Minimal Resolution Conjecture holds for a general embedding
$C\hookrightarrow \PP^r$ of degree $d$.
\end{thm}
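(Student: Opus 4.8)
The plan is to follow the same smoothing strategy used for Theorem \ref{igc}, but now we must verify property (R) for \emph{all} exterior powers $\bigwedge^i M_X$ (for $i=1,\ldots,r-1$), not merely $i=1$ and $i=r-1$. Write $d=\mu r+\epsilon$ with $\epsilon\in\{+1,-1\}$. For $\epsilon=+1$ the degenerate model will be a smooth curve $C_1\subset\PP^r$ of integral slope $\mu$ (so $\deg C_1=\mu r$) with general moduli and with $M_{C_1}$ verifying (R) — such a curve exists by Theorem \ref{MRC0} provided the relevant Brill--Noether number is nonnegative, which one checks by the same bookkeeping as in the proof of Theorem \ref{igc}, possibly after first lowering the genus by $1$ and attaching the extra line to correct it. For $\epsilon=-1$ one instead takes $\deg C_1=\mu r$ of slope $\mu$ and then attaches \emph{fewer} points, or alternatively degenerates to a slope-$\mu$ curve together with a suitable configuration; in either case the essential point is that the nodal curve $X=C_1\cup\ell_1\cup\cdots\cup\ell_{|\epsilon|}$ (attaching one general two-secant line when $\epsilon=+1$) has $p_a(X)=g$, $\deg(X)=d$, and is flatly smoothable to a general curve by a single application of Lemma \ref{attach} (for $\epsilon=-1$ one works with a curve of degree $d+1\equiv 0$ and removes a point, which requires instead passing to a sublinear series, and this is the case to handle with a little care).

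The heart of the argument is the verification, for each $i=1,\ldots,r-1$, that $H^0\bigl(X,\bigwedge^i M_X\otimes\xi\bigr)=0$ for a general $\xi\in\mathrm{Pic}^{g-1+\lfloor id/r\rfloor}(X)$ on the nodal curve, granting the corresponding vanishing on $C_1$. On the chain $X=C_1\cup\ell$, the restriction $M_X|_\ell=M_\ell=\OO_{\PP^1}(-1)\oplus\OO_{\PP^1}^{\oplus(r-1)}$, so $\bigwedge^i M_\ell=\OO_{\PP^1}(-1)^{\oplus\binom{r-1}{i-1}}\oplus\OO_{\PP^1}^{\oplus\binom{r-1}{i}}$. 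The strategy, exactly as in the proof of Theorem \ref{igc}, is to choose $\xi$ so that $\xi|_\ell$ has degree $-1$ and $\xi|_{C_1}=\eta\otimes\OO_{C_1}(\ell\cdot C_1)$, where $\eta\in\mathrm{Pic}(C_1)$ is a general line bundle of the degree prescribed by property (R) for $\bigwedge^i M_{C_1}$. Then $H^0(\ell,\bigwedge^i M_\ell\otimes\xi|_\ell)=0$ because each summand is twisted down to degree $-1$, and the Mayer--Vietoris sequence
$$0\longrightarrow \textstyle\bigwedge^i M_X\otimes\xi\longrightarrow \bigl(\bigwedge^i M_{C_1}\otimes\xi|_{C_1}\bigr)\oplus\bigl(\bigwedge^i M_\ell\otimes\xi|_\ell\bigr)\longrightarrow \bigl(\textstyle\bigwedge^i M_X\otimes\xi\bigr)|_\Delta\longrightarrow 0$$
together with $H^0(C_1,\bigwedge^i M_{C_1}\otimes\eta(\ell\cdot C_1))=0$ — which holds because twisting $\eta$ up by the (general) intersection points keeps it general of the same degree up to a bounded shift, so one must choose degrees so the shifted bundle still lies in the vanishing locus — forces $H^0(X,\bigwedge^i M_X\otimes\xi)=0$. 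One then checks that $\deg\xi=g-1+\lfloor id/r\rfloor$ is exactly correct: since $d=\mu r\pm1$, we have $\lfloor id/r\rfloor=i\mu$ or $i\mu+\lfloor \pm i/r\rfloor$, and the one extra attached line contributes the needed $\pm$ correction to the degree count, which is the reason the argument works precisely for $d\equiv\pm1\pmod r$ and not for general $d$.

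\textbf{Main obstacle.} The delicate point is arranging, for \emph{all} $i$ simultaneously, that $\xi$ has \emph{both} the globally-correct degree $g-1+\lfloor id/r\rfloor$ on $X$ \emph{and} restrictions of the prescribed degrees on $C_1$ and on $\ell$; because the degree on $C_1$ must be $g_1-1+\lfloor id_1'/r\rfloor$ (the value dictated by (R) for $C_1$, which has integral slope) up to the correction by the $\lfloor\pm i/r\rfloor$ and by $\ell\cdot C_1$, the bookkeeping only closes up because $d\equiv\pm1$ makes $\lfloor id/r\rfloor-i\mu\in\{0,\pm1,\dots\}$ small and predictable. Verifying this degree match for every $i$, handling the $\epsilon=-1$ case where one passes to an incomplete linear series, and confirming that ``general $\eta$ of degree $n$, twisted by the fixed divisor $\ell\cdot C_1$, is still general of degree $n+\delta$ in the vanishing locus of (R)'' — i.e. that $\Theta_{\bigwedge^i M_{C_1}}$ being a genuine divisor (which Theorem \ref{MRC0}'s proof gives, since $\bigwedge^i M_{C_1}$ is a sum of equal-degree line bundles) is preserved under this translation — is the step I expect to require the most care, though each piece is routine once set up correctly.
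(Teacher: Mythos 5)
Your overall strategy for the case $d=\mu r+1$ is the paper's: degenerate to $X=C_1\cup\ell$ where $C_1\subset\PP^r$ is a slope-$\mu$ curve of genus $g-1$ furnished by Theorem \ref{MRC0}, take $\xi_\ell$ of degree $-1$ and $\xi_{C_1}=\eta(\ell\cdot C_1)$, and exploit the identity $\lfloor id/r\rfloor=i\mu$ for $i\leq r-1$. But the cohomological step as you state it fails. You require $H^0\bigl(C_1,\bigwedge^i M_{C_1}\otimes\eta(\ell\cdot C_1)\bigr)=0$, and this is impossible: the bundle $\bigwedge^i M_{C_1}$ has rank $\binom{r}{i}$ and slope $-i\mu$, so after twisting by a line bundle of degree $(g-2+i\mu)+2$ its Euler characteristic is $2\binom{r}{i}>0$, hence it has nonzero sections for \emph{every} $\eta$; and your proposed remedy (``choose degrees so the shifted bundle still lies in the vanishing locus'') cannot be carried out, since lowering $\deg\xi_{C_1}$ while keeping $\deg\xi=g-1+\lfloor id/r\rfloor$ forces $\deg\xi_\ell\geq 0$ and destroys the vanishing on $\ell$. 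What the Mayer--Vietoris argument actually needs is weaker, and it is exactly what the paper uses: since $H^0(\ell,\bigwedge^i M_\ell\otimes\xi_\ell)=0$, any global section of $\bigwedge^i M_X\otimes\xi$ restricts to zero on $\ell$, hence its restriction to $C_1$ vanishes at the two nodes, i.e.\ it is a section of $\bigwedge^i M_{C_1}\otimes\xi_{C_1}(-\ell\cdot C_1)=\bigwedge^i M_{C_1}\otimes\eta$, which is zero for general $\eta\in\mathrm{Pic}^{g-2+i\mu}(C_1)$ by property (R). No statement about ``$\eta$ twisted up remaining general in the theta-divisor complement'' is needed, and none would be true. (The paper also checks directly that $H^0(X,\omega_X(-1))\cong H^0(C_1,K_{C_1}(-1))$ so that the Petri map of $X$ is injective and the smoothing has general moduli; citing Lemma \ref{attach} alone is delicate here because $C_1$ need not be non-special.)

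Your treatment of $d=\mu r-1$ is also not viable as sketched: ``removing a point'' or passing to a sublinear series cannot lower the degree of the embedding line bundle, and twisting $A^{\otimes r}$ down destroys the split structure of $M_{C_1}$ on which the whole argument rests. The paper only says this case is ``similar''; a correct variant attaches to a slope-$(\mu-1)$ curve a degenerate rational curve of degree $r-1$ (whose kernel bundle is $\OO_{\PP^1}\oplus\OO_{\PP^1}(-1)^{\oplus(r-1)}$), and the degree bookkeeping then closes with $\deg\xi_{R}=i-2$; your proposal does not supply such a construction.
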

\begin{proof} We treat only the case $d=\mu r+1$, the other case being similar. From Brill-Noether theory, we obtain that $g\leq (r+1)(\mu-1)+1$. Applying Theorem \ref{MRC0}, there exists a smooth curve with general moduli $C_1\subset \PP^r$ of genus $g-1$ and degree $d-1=\mu r$, such that the kernel bundle $M_{C_1}$ enjoys property (R).

 Let $\ell$ be a general $2$-secant line to $C_1$ and set $X:=C_1\cup \ell\subset \PP^r$. It is easy to verify that $H^0(X, \OO_X(1))\cong H^0(C_1, \OO_{C_1}(1))$ and $H^0(X, \omega_X(-1))\cong H^0(C_1, K_{C_1}(-1))$, so the Petri map $\mu_0(X)$ can be assumed to be injective and $X$ deforms in $\PP^r$ to a curve of genus $g$ with general moduli. By assumption, $C_1$ possesses for each $1\leq i\leq r-1$ a line bundle $\eta\in \mbox{Pic}^{g-2+i\mu}(C_1)$ such that $H^0\bigl(C_1,\bigwedge^i M_{C_1}\otimes \eta \bigr)=0$. Observing that for all $i\leq r-1$
 $$g-1+\Bigl \lfloor \frac{id}{r}\Bigr \rfloor =g-1+i\mu$$ (and this is the point where the assumption $d\equiv 1 \mbox{ mod } r$ is essential!), we can construct a line bundle $\xi \in \mbox{Pic}^{g-1+i\mu}(X)$, such that $\xi_{\ell}=\OO_{\PP^1}(-1)$ and $\xi_{C_1}=\eta(C_1\cdot \ell)$. Now one checks directly that $H^0\bigl(X, \bigwedge^i M_X\otimes \xi)=0$, thus finishing the proof.
\end{proof}

\vskip 2pt

After this preparations, we are finally ready to prove Theorem \ref{MRC1}.

\vskip 4pt
\noindent \emph{Proof of Theorem \ref{MRC1}.} We fix $d, r\geq 1$ such that $d\geq 2r$. Using Theorems \ref{MRC0} and \ref{pm}, we need to consider only the case when $\frac{d}{r}\not\equiv 0, \pm 1 \mbox{ mod } r$ and inequality (\ref{felt}) holds. We set $a:=\lfloor \frac{d}{r}\rfloor -2\geq 0$ and write $d=ar+d_1$, where $2r+2\leq d_1\leq 3r-2$. We set $g_1:=\mbox{max}\{g-ar, 0 \}$. Inequality (\ref{felt}) implies that $d_1\geq 2g_1-2$. If $d_1$ is even, applying Proposition \ref{bielliptic}, there exists a smooth non-special curve $C_1\subset \PP^r$ of genus $g_1$ and degree $d_1$, such that $\Omega_{\PP^r|C_1}^1$ verifies condition (R). If, on the other hand, $d_1$ is odd, then there is a curve of degree $d_1-1$ and genus $g_1$ with the same property. We treat only the case when $d_1$ is even and indicate at the end the modifications in the proof needed in the remaining case.

Setting, as usual, $M_{C_1}:=\Omega_{\PP^r |C_1}^1(1)$, condition (R) amounts to the following  vanishing
\begin{equation}\label{feltetel}
H^0\Bigl(C_1, \bigwedge^i M_{C_1}\otimes \eta\Bigr)=0, \ \mbox{ for }i=1, \ldots, r-1 \ \mbox{ and a general  } \eta\in \mbox{Pic}^{g_1-1+\lfloor \frac{id_1}{r}\rfloor}(C_1).
\end{equation}

\vskip 3pt

To $C_1$ we attach $a$ rational normal curves as follows. We fix subsets $\Delta_1, \ldots, \Delta_a \subset C_1$ consisting of general points such that
$|\Delta_j|\leq r+1$ for $j=1, \ldots, a$ and furthermore $g=g_1+\sum_{j=1}^a |\Delta_j|-a$. For each $1\leq j\leq a$, we choose a general rational curve
$R_j\subset \PP^r$ intersecting $C_1$ transversally along the set $\Delta_j$, then set
$$X:=C_1\cup R_1\cup \ldots \cup R_a\subset \PP^r.$$
Clearly $p_a(X)=g_1+\sum_{j=1}^a |\Delta_j|-a=g$ and $\mbox{deg}(X)=d$. Applying Lemma \ref{attach}, we conclude that $X$ is non-special and flatly smoothable in $\PP^r$.

Let us fix an index $1\leq i\leq r-1$. Via the surjection
$$\mbox{Pic}^{g-1+\lfloor \frac{id}{r}\rfloor}(X)\longrightarrow \mbox{Pic}^{g-1+a+\lfloor \frac{id_1}{r} \rfloor}(C_1) \times \prod_{j=1}^a \mbox{Pic}^{i-1}(R_j) \longrightarrow 0,$$
we consider a line bundle $\xi$ on $X$ of degree $g-1+\lfloor \frac{id}{r}\rfloor$, such that $\mbox{deg}(\xi_{R_j})=i-1$, for all $j$. We claim that $\xi_{C_1}$  can be chosen so that $H^0\bigl(X, \bigwedge^i M_X\otimes \xi\bigr)=0$.
 \vskip 3pt

Indeed, we first observe that $\bigwedge^i M_{R_j}$ is a sum of line bundles of degree $-i$, hence $H^0\bigl(R_j, \bigwedge^i M_{R_j}\otimes \xi_{R_j}\bigr)=0$ for degree reasons. Considering the inclusion
$$H^0\Bigl(X, \bigwedge^i M_X\otimes \xi\Bigr)\hookrightarrow H^0\Bigl(C_1, \bigwedge^i M_{C_1}\otimes\xi_{C_1}\Bigr) \oplus \Bigl( \bigoplus_{j=1}^a H^0\Bigl(R_j, \bigwedge^i M_{R_j}\otimes \xi_{R_j}\Bigr)\Bigr)$$
induced by the Mayer-Vietoris sequence on $X$,
from the previous observation it follows that a non-zero section in $H^0\bigl(X, \bigwedge^i M_X\otimes \xi\bigr)$ corresponds to a non-zero section in $H^0\bigl(C_1, \bigwedge^i M_{C_1}\otimes \xi_{C_1}(-\sum_{j=1}^a \Delta_j)\bigr)$. Observing that $\mbox{deg}(\xi_{C_1})-\sum_{j=1}^a |\Delta_j|=g_1-1+\lfloor \frac{id_1}{r}\rfloor$, we choose $\xi_{C_1}$ so that the vanishing (\ref{feltetel}) holds for $\eta=\xi_{C_1}\bigl(-\sum_{j=1}^a \Delta_j\bigr)$. We conclude that the kernel bundle of a general smoothing of $X\subset \PP^r$ verifies condition (R).
\hfill $\Box$

\begin{rmk} In the previous proof, if $d_1$ is odd, then we start with a smooth curve of degree $d_1-1$ and genus $g_1$, to which we attach as before $a-1$ rational normal curves and one \emph{linearly normal} elliptic curve $E\subset \PP^r$. Since the restricted cotangent bundle  $\Omega_{\PP^r|E}^1$ is stable, the rest of the proof follows along similar lines.
\end{rmk}

\section{Ulrich bundles on $K3$ surfaces}

Let $X\subset \PP^r$ be a smooth arithmetically Cohen-Macaulay projective variety of degree $d$. A vector bundle $E$ on $X$ is said to be an \emph{Ulrich sheaf} if
\begin{equation}\label{ulr3}
H^i(X, E(-i))=0 \ \mbox{ for } i>0 \ \mbox{ and } \ H^i(X, E(-i-1))=0\  \mbox{ for } \  i<\mathrm{dim}(X).
\end{equation}
This definition is equivalent to the one mentioned in the Introduction, see \cite{ES} Proposition 2.1. An Ulrich sheaf $E$  enjoys a number of properties we list, see \cite{CHGS}:

\vskip 3pt

\noindent (i) The restriction $E_H$ to a general hyperplane section $H$ of $X$ is again an Ulrich bundle.
\vskip 2pt

\noindent (ii) $h^0(X, E)=d\cdot \mbox{rk}(E)$ and $\mbox{deg}(E|_C)=\mbox{rk}(E)(d+g-1)$, where $g$ is the genus of a general curvilinear section $C=X\cap \PP^{r-\mathrm{dim}(X)+1}$ of $X$.
Furthermore, $\OO_C(-1)\notin \Theta_{E_C}$, hence the restriction $E_C$ admits a theta divisor.

\vskip 2pt

\noindent (iii) Ulrich bundles are semistable with respect to the polarization $\OO_X(1)$.

\vskip 3pt

Combining properties (i) and (iii), one obtains rational maps between moduli spaces of semistable bundles on $X$ and on the hyperplane section $H$ respectively.
From now on let $X=S$ be a smooth surface, in which case the condition (\ref{ulr3}) amounts to the vanishing of the following cohomology groups
$$
H^0(S,E(-1)),\ H^1(S,E(-1)),\ H^1(S,E(-2)),\ H^2(S,E(-2)).
$$
This implies the further vanishing $H^0(S,E(-2))=0$ and $H^2(S,E(-1))=0$ (the bundle $E$ being $0$-regular, is $1$-regular as well), hence
$\chi(S,E(-1))=\chi(S,E(-2))=0$, see also \cite{ES} Corollary 2.2.
Applying Riemann-Roch to both $E(-1)$ and $E(-2)$ and taking the difference of the Euler characteristics, we obtain the relation
\begin{equation}\label{ulrk}
H\cdot \left(c_1(E)-\frac{\mathrm{rk}(E)}{2}(K_S+3H)\right)=0.
\end{equation}
This calculation motivates the following:

\begin{defn}
A \emph{special Ulrich bundle} on a surface $S$ is a $0$-regular rank $2$ vector bundle $E$  with determinant $\mbox{det}(E)=K_S(3)$.
\end{defn}

It is proved in \cite{ES} Corollary 2.3 that such bundles are indeed Ulrich. If  $E$ is a special rank $2$ Ulrich bundle on a $K3$ surface $S\subset \PP^{s+1}$, from Riemann-Roch we compute $c_2(E)=\frac{5}{2}H^2+4$. Moreover,  $E$ being $0$-regular it is globally generated. A parameter count performed in \cite{ES} Remark 6.4 suggests that $K3$ surfaces could possess rank $2$ Ulrich bundles. Our Theorem \ref{thm: ulrich} confirms this expectation and we show that the hypothesis of \cite{ES} Proposition 6.2 is verified for Lazarsfeld-Mukai vector bundles on many polarized $K3$ surfaces.
An immediate consequence of the relation (\ref{ulrk}) is the following fact:

\begin{cor}\label{oddrk}
A $K3$ surface with Picard number $1$ carries no Ulrich bundles of odd rank.
\end{cor}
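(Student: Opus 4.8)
The plan is to read off the conclusion directly from the numerical relation \eqref{ulrk}, specializing to the case where $S$ is a $K3$ surface with $\mathrm{Pic}(S)=\mathbb Z\cdot[H]$. Since $K_S=\OO_S$, relation \eqref{ulrk} for an Ulrich bundle $E$ of rank $n:=\mathrm{rk}(E)$ becomes
$$H\cdot\Bigl(c_1(E)-\frac{n}{2}\cdot 3H\Bigr)=0,$$
that is, $H\cdot c_1(E)=\frac{3n}{2}H^2$. The first step is to observe that, under the Picard number one hypothesis, every line bundle on $S$ is a multiple of $H$, so we may write $c_1(E)=tH$ for some integer $t$; substituting gives $tH^2=\frac{3n}{2}H^2$, and since $H^2>0$ we conclude $t=\frac{3n}{2}$.

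The second and final step is simply to note that $t=\frac{3n}{2}$ is an integer if and only if $n$ is even. Hence if $n$ is odd there can be no integer $t$ with $c_1(E)=tH$ satisfying the Ulrich constraint, so $S$ carries no Ulrich bundle of odd rank. This completes the argument.

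There is essentially no obstacle here: the only point requiring a small amount of care is ensuring that \eqref{ulrk} is valid for Ulrich bundles of \emph{arbitrary} rank and not merely rank $2$ — but inspecting its derivation, which only used the vanishings $\chi(S,E(-1))=\chi(S,E(-2))=0$ together with Riemann-Roch, we see it holds for any rank. One could also phrase the conclusion as: on a Picard number one $K3$ surface, any Ulrich bundle $E$ automatically satisfies $c_1(E)=\frac{3}{2}\mathrm{rk}(E)\cdot H$, which forces the rank to be even, and in the special rank $2$ case recovers $\mathrm{det}(E)=\OO_S(3)$ as in Theorem \ref{thm: ulrich}.
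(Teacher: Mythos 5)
Your proof is correct and is exactly the argument the paper intends: Corollary \ref{oddrk} is stated there as an immediate consequence of relation (\ref{ulrk}), which with $K_S=\OO_S$ and $\mathrm{Pic}(S)=\mathbb Z\cdot[H]$ forces $c_1(E)=\frac{3}{2}\mathrm{rk}(E)\,H$, hence even rank. Your remark that (\ref{ulrk}) holds for arbitrary rank (its derivation only uses $\chi(S,E(-1))=\chi(S,E(-2))=0$ and Riemann--Roch) is also consistent with how the paper derives it.
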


In even rank, for each $a\geq 1$ one looks for  Ulrich bundles $E$ on $S$ with
\begin{equation}\label{chernu}
\mbox{rk}(E)=2a, \  \ \mbox{det}(E)=\mathcal O_S(3a)\mbox{ and } c_2(E)=9a^2s-4a(s-1).
\end{equation}
If $\mbox{Pic}(S)=\mathbb Z\cdot [H]$ with $H^2=2s$, every Ulrich bundle $E$ on $S$  satisfies (\ref{chernu}). Natural candidates for $E$ are the Lazarsfeld-Mukai bundles $E_{C, A}$ defined by the exact sequence
\begin{equation}
\label{eqn: F}
0\longrightarrow E_{C, A}^{\vee} \longrightarrow H^0(C, A)\otimes \OO_S\longrightarrow A\longrightarrow 0,
\end{equation} where $C\in |\OO_S(3a)|$ is a smooth curve and $A\in W^{2a-1}_{9a^2s-4a(s-1)}(C)$ is complete and base point free. The curve $C$ has $K_C=\OO_C(3a)$ and $\mbox{Cliff}(C)\leq \mbox{Cliff}(\OO_C(1))=6as-2s-2$, with equality for instance when $\mbox{Pic}(S)=\mathbb Z\cdot [H]$. Note that it is by no means certain that such an $A$ exists, and if so, that it leads to an Ulrich bundle. Theorem \ref{thm: ulrich} establishes these facts in the most important case, $a=1$. Then we construct stable Ulrich bundles of higher even rank, by deforming
bundles sitting in extensions of two Ulrich bundles of smaller rank.

\begin{rmk}
The hypothesis in Theorem \ref{thm: ulrich} that the Clifford index of a cubic section $C\in |\OO_S(3)|$ be computed by $\OO_S(1)$ is rather mild.
For instance, it is satisfied if $S\subset \PP^3$ is a quartic surface, when $C$ is a $(3, 4)$ complete intersection in $\PP^3$. From \cite{L2} page 185 we obtain that $\mathrm{gon}(C)\ge 8$. Since $C$ has Clifford dimension $1$, cf. \cite{CP95}, it follows that $\mbox{Cliff}(C)\geq 6=\mathrm{Cliff}({\mathcal O}_C(1))$. The only place in the proof where this condition is used is to ensure that $C$ carries a base point free pencil of degree $\frac{5H^2}{2}+4$.
\end{rmk}

\begin{lem}
\label{lemma: AF11}
Let (S, H) be a polarized $K3$ surface of genus $g$ and $C\in |H|$ a general curve in its linear system having gonality $k$. Then $C$ carries a complete, base point free pencil $\mathfrak g^1_{g-k+3}$.
\end{lem}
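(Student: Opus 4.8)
The plan is to realize the sought pencil as a quotient of a Lazarsfeld--Mukai bundle living in an appropriate moduli space of sheaves on $S$, following the strategy of \cite{AF11}. First I would reduce to the essential case. By the Green--Lazarsfeld gonality theorem \cite{GL} the gonality is constant along $|H|$, so a general $C\in|H|$ has gonality $k$ and carries a complete base point free $\mathfrak{g}^1_k$, and Brill--Noether theory imposes $k\le\lfloor(g+3)/2\rfloor$. If $g$ is odd and $k=(g+3)/2$ then $g-k+3=k$ and the pencil $\mathfrak{g}^1_k$ itself does the job; so from now on I may assume $g-2k+3\ge1$, equivalently $g-k+3>k$.

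Next I would produce the relevant vector bundle on $S$. Consider rank two bundles $F$ on $S$ with $\det(F)=\mathcal{O}_S(H)$ and $c_2(F)=g-k+3$; Riemann--Roch gives $\chi(F)=k$, and the corresponding Mukai vector $v$ satisfies $\langle v,v\rangle=2(g-2k+3)\ge0$. Hence, assuming $[H]$ primitive, Mukai's theory yields that the moduli space $M_S(v)$ of $H$-stable such bundles is non-empty, smooth of dimension $\langle v,v\rangle+2=2(g-2k+4)$, with general member locally free. For $F$ general in $M_S(v)$, stability and Serre duality force $H^2(S,F)=H^0(S,F(-H))^{\vee}=0$ and, generically, $H^1(S,F)=0$, whence $h^0(S,F)=\chi(F)=k$; I would also record, using the known results on global generation of Lazarsfeld--Mukai bundles, that a general such $F$ is globally generated. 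Given this, for a general two--dimensional subspace $U=\langle s_1,s_2\rangle\subset H^0(S,F)$ the section $s_1\wedge s_2\in H^0(S,\det F)=H^0(S,\mathcal{O}_S(H))$ cuts out a smooth curve $C_0\in|H|$, the map $U\otimes\mathcal{O}_S\to F$ is injective with locally free cokernel, and one obtains an exact sequence
$$0\longrightarrow\mathcal{O}_S^{\oplus2}\longrightarrow F\longrightarrow L_0\longrightarrow0$$
with $L_0$ a line bundle on $C_0$. This identifies $F$ with the Lazarsfeld--Mukai bundle $E_{C_0,A_0}$ associated to $A_0:=K_{C_0}\otimes L_0^{\vee}$; comparing degrees gives $\deg A_0=g-k+3$, and passing to cohomology together with $h^0(S,F)=k$ yields $h^0(C_0,L_0)=k-2$, hence $h^0(C_0,A_0)=h^1(C_0,L_0)=2$. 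Thus $C_0$ carries a \emph{complete} $\mathfrak{g}^1_{g-k+3}$.

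Finally I would check that $A_0$ is base point free and that $C_0$ can be taken general in $|H|$. For base point freeness: a $\mathfrak{g}^1_{g-k+3}$ with a base point is of the form $A'(\Delta)$ with $A'$ a base point free pencil of degree $g-k+3-m$ and $\Delta$ effective of degree $m\ge1$; by the linear growth condition for curves on $K3$ surfaces one has $\dim W^1_{g-k+3-m}(C_0)\le(g-k+3-m)-k$ in the relevant range, so the pencils with a base point form a locus of dimension at most $g-2k+3$, strictly less than $\rho(g,1,g-k+3)=g-2k+4$, the dimension of the family of the $A_0$'s constructed above. For genericity of $C_0$: the assignment $(F,U)\mapsto(C_0,A_0)$ is inverse to $(C,A)\mapsto(E_{C,A},H^0(C,A)^{\vee})$, hence injective, and as $F$ runs through $M_S(v)$ and $U$ through $G(2,H^0(F))$ its source has dimension $2(g-2k+4)+2(k-2)=g+\rho(g,1,g-k+3)$; since each fibre over a fixed curve embeds into the corresponding $W^1_{g-k+3}$, which is $\rho$-dimensional for a curve of gonality $k$ by the Brill--Noether theory of curves on $K3$ surfaces, a dimension count forces the induced rational map to $|H|$ to be dominant. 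Hence a general $C\in|H|$ carries a complete base point free $\mathfrak{g}^1_{g-k+3}$.

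The step I expect to be the main obstacle is the interface with the theory of sheaves on $K3$ surfaces: one needs the general member of $M_S(v)$ to be globally generated with vanishing $H^1$, and then, on the curve side, to control base point freeness, which forces one to invoke the linear growth bound on the Brill--Noether loci $W^1_d$ of curves moving in $|H|$.
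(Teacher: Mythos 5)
Your route is genuinely different from the paper's: you try to manufacture the pencil from a general member of the moduli space $M_S(v)$ with $v=(2,H,k-2)$ and then get genericity of the resulting curve by a fibre-dimension count, whereas the proof of Lemma \ref{lemma: AF11} in the paper works entirely on the curve side, quoting \cite{AF11} (Theorem 3.12 and Remark 3.13) to see that for a general $C\in |H|$ of Clifford dimension one every component of $W^1_{g-k+2}(C)$ has dimension $g-2k+2$, deducing by excess linear series that every component of $W^1_{g-k+3}(C)$ has dimension $g-2k+4$, and then noting that $C+W^1_{g-k+2}(C)$ has dimension $g-2k+3$, so the general member of each component is complete and base point free. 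Measured against that, your proposal has two genuine gaps. First, the statement that a general $F\in M_S(v)$ is globally generated with $H^1(S,F)=0$ (hence $h^0(S,F)=k$) is justified by appeal to ``known results on global generation of Lazarsfeld--Mukai bundles''; this is circular, because whether the general point of $M_S(v)$ is a Lazarsfeld--Mukai bundle of the required type is exactly what is at stake: a stable bundle with these invariants need not have $h^1=0$, and exhibiting one that does is equivalent to exhibiting a curve in $|H|$ with a complete $\mathfrak g^1_{g-k+3}$, i.e.\ the lemma itself. You flag this as ``the main obstacle,'' but no independent argument is offered, and the natural arguments run through the same Brill--Noether dimension estimates for curves in $|H|$ that the paper uses directly. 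Second, your dominance count needs $\dim W^1_{g-k+3}(C')\le \rho(g,1,g-k+3)$ for \emph{every} smooth curve $C'$ in the image of the rational map to $|H|$, while the $K3$-specific bounds you invoke hold only for a \emph{general} curve in $|H|$; if the map failed to dominate, its image would consist of special curves, for which only Martens-type bounds (of order $g-k$, far weaker than $g-2k+4$) are available, so the count does not close. Moreover, over such curves the fibre is really a Grassmannian of $2$-dimensional spaces of sections inside possibly incomplete series, not a subset of $W^1_{g-k+3}(C')$, which inflates it further when $h^1(F)\neq 0$.

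Two smaller points. Your blanket hypothesis that $[H]$ be primitive (needed, together with $v$-genericity of the polarization, which you do not address, for Mukai's non-emptiness, smoothness and irreducibility of $M_S(v)$) is not available in the way the lemma is used in the paper: it is applied in the proof of Theorem \ref{thm: ulrich} to the non-primitive polarization $\mathcal O_S(3)$, so any such argument must be phrased in terms of primitivity of $v$, not of $H$. Also, your base-point-freeness step is placed before dominance is known, so at that stage ``the dimension of the family of the $A_0$'s constructed above'' on a fixed curve is not yet known to be $\rho$; in fact this step is superfluous, since dualizing $0\to U\otimes\mathcal O_S\to F\to L_0\to 0$ shows that $U^{\vee}$ already generates $A_0=K_{C_0}\otimes L_0^{\vee}$, so the pencils you construct are automatically base point free. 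The substantive missing items are the two gaps above, and the most economical repair is exactly the paper's argument: apply the dimension statements of \cite{AF11} to $W^1_{g-k+2}(C)$ and $W^1_{g-k+3}(C)$ on a general curve directly, instead of routing through the moduli space of sheaves.
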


\proof
The case $\rho(g, 1, k)>0$ follows immediately, for in this situation $g=2k-3$ and hence $g-k+3=k$. We may assume that $\rho(g, 1, k)\leq 0$. When the Clifford dimension of a general curve in $|H|$ equals $1$, from \cite{AF11} Theorem 3.12 and Remark 3.13, one obtains that for a general  $C\in |H|$, every component of $W^1_{g-k+2}(C)$ has dimension $g-2k+2$. Via excess linear series it then follows that each component of $W^1_{g-k+3}(C)$ has dimension $\mbox{dim}(W_{g-k+2}^1(C))+2=g-2k+4\bigl(=\rho(g, 1, g-k+3)\bigr)$.

Since $\mathrm{dim}\bigl(C+W^1_{g-k+2}(C)\bigr)=g-2k+3$, we conclude that the general element in every component of $W^1_{g-k+3}(C)$ is base point free and complete. The case when the general curve in $|H|$ has Clifford dimension at least $2$, will not be needed in this paper,  but it can be deduced along the lines of \cite{AF11} Section 5.
\endproof

\vskip 2pt

The following result is needed in the proof of Theorem \ref{thm: ulrich}.
\vskip 3pt

\begin{lem}\label{quadr}
Let $(S, H)$ be a polarized $K3$ surface with $H^2=2s\geq 4$ and $D\in |2H|$ a smooth quadric section. Then the following estimate holds
$$\mathrm{dim} \ \bigl\{\Gamma\in D_{5s+4}: h^0(D, \OO_D(\Gamma-H))\geq 1\bigr\}\leq 2s+7.$$
\end{lem}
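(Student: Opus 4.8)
The plan is to bound the dimension of the locus of effective divisors $\Gamma\in D_{5s+4}$ on the quadric section $D\in|2H|$ for which $\mathcal O_D(\Gamma-H)$ is effective. Such a $\Gamma$ satisfies $\Gamma = H|_D + \Gamma'$ with $\Gamma'$ effective of degree $5s+4 - H\cdot D = 5s+4 - 4s = s+4$; here I use that $H\cdot D = 2H^2/2$... let me be careful: $H\cdot D = H\cdot 2H = 2s\cdot 2 = 4s$, and $\deg(\mathcal O_D(H)) = H\cdot D = 4s$, so $\deg\Gamma' = 5s+4-4s = s+4$. Thus the locus in question is dominated by the map
\[
\bigl\{(F,\Gamma')\ :\ F\in |\mathcal O_D(H)|,\ \Gamma'\in D_{s+4}\bigr\}\longrightarrow D_{5s+4},\qquad (F,\Gamma')\mapsto F+\Gamma'.
\]
Hence its dimension is at most $\dim|\mathcal O_D(H)| + \dim D_{s+4}$. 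Now $\dim D_{s+4} = s+4$ since the symmetric product of a curve has dimension equal to the number of points. For $\dim|\mathcal O_D(H)|$, restrict $0\to\mathcal O_S(-H)\to\mathcal O_S(H)\to\mathcal O_D(H)\to0$; since $\mathcal O_S(-H)$ has no cohomology in degrees $0,1$ (as $H$ is ample and $S$ is a K3, $H^1(S,\mathcal O_S(-H))=H^1(S,\mathcal O_S(H))^\vee$... actually $H^1(S,\mathcal O_S(H))=0$ by the vanishing for ample line bundles on K3 surfaces together with $h^1(\mathcal O_S)=0$), one gets $h^0(D,\mathcal O_D(H)) = h^0(S,\mathcal O_S(H)) = s+2$, so $\dim|\mathcal O_D(H)| = s+1$. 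This gives the bound $(s+1) + (s+4) = 2s+5 \le 2s+7$.

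**Key steps in order.** First I would establish the numerical reduction: any $\Gamma$ with $h^0(D,\mathcal O_D(\Gamma-H))\ge 1$ decomposes as $H|_D + \Gamma'$ with $\deg\Gamma' = s+4$, reducing the problem to estimating $\dim|\mathcal O_D(H)| + \dim D_{s+4}$. Second, I would compute $h^0(D,\mathcal O_D(H))$ via the restriction sequence on $S$, invoking the standard vanishing $H^1(S,\mathcal O_S(H))=0$ and $H^0(S,\mathcal O_S(-H))=H^1(S,\mathcal O_S(-H))=0$; this yields $\dim|\mathcal O_D(H)| = s+1$. Third, I would add $\dim D_{s+4}=s+4$ to conclude the bound $2s+5$, which is $\le 2s+7$ as required. (If one wants a sharper count one could take into account that the generic fiber of $(F,\Gamma')\mapsto F+\Gamma'$ may be positive-dimensional, but the crude estimate already suffices for the $2s+7$ bound with room to spare.)

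**Main obstacle.** The only real subtlety is in the very first step: one must check that $h^0(D,\mathcal O_D(\Gamma-H))\ge1$ genuinely forces $\Gamma-H|_D$ to be \emph{effective} on $D$ — that is, that a section of $\mathcal O_D(\Gamma-H)$ produces an honest effective divisor — rather than merely a numerical class, and that the resulting $\Gamma'$ indeed has the stated degree, which is a direct computation with $H\cdot D = 4s$ on the K3 surface (using $H^2=2s$). A secondary point is to make sure the vanishing $H^1(S,\mathcal O_S(H))=0$ and the computation $h^0(S,\mathcal O_S(H)) = \tfrac{1}{2}H^2+2 = s+2$ are invoked correctly, but these are classical facts about line bundles on K3 surfaces and pose no difficulty. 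Everything else is a parameter count on symmetric products and linear systems.
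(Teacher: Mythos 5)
The first step of your reduction is where the argument fails, and it is precisely the point you flagged as the ``main obstacle'' and then waved through: the hypothesis $h^0(D,\mathcal{O}_D(\Gamma-H))\geq 1$ only says that $\Gamma$ is \emph{linearly equivalent} to $F+\Gamma'$ for some $F\in|\mathcal{O}_D(H)|$ and some effective $\Gamma'$ of degree $s+4$; it does not say that $\Gamma$ itself contains a member of $|\mathcal{O}_D(H)|$ as a subdivisor. A section of $\mathcal{O}_D(\Gamma-H)$ produces an effective divisor $\Gamma'$ with $\mathcal{O}_D(\Gamma')\cong\mathcal{O}_D(\Gamma-H)$, i.e.\ $\Gamma\in|\mathcal{O}_D(H+\Gamma')|$, and a general member of that linear system is \emph{not} of the form (hyperplane divisor)$\,+\,$(residual points). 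So the image of your map $(F,\Gamma')\mapsto F+\Gamma'$ is only the sublocus of cycles actually containing a hyperplane section of $D$, and your count $\dim|\mathcal{O}_D(H)|+\dim D_{s+4}=2s+5$ bounds only that sublocus. (Your arithmetic is fine: $H\cdot D=4s$, $\deg\Gamma'=s+4$, $h^0(D,\mathcal{O}_D(H))=s+2$.)

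The gap cannot be repaired by ``justifying effectivity,'' because the bound $2s+5$ is in fact false for the full locus. The correct parametrization, which is the one used in the paper, is the incidence variety $\mathcal{V}=\{(\Gamma,\zeta)\in D_{5s+4}\times D_{s+4}:\ \Gamma\in|\mathcal{O}_D(H+\zeta)|\}$. Since $D$ is a half-canonical curve of genus $4s+1$, Riemann--Roch gives $h^0(D,\mathcal{O}_D(H+\zeta))=s+4+h^0(D,\mathcal{O}_D(H-\zeta))$, so over a general $\zeta$ the fibre of $\mathcal{V}\to D_{s+4}$ is a $\mathbb{P}^{s+3}$ and the dominating component of $\mathcal{V}$ has dimension $(s+4)+(s+3)=2s+7$; moreover the fibre of $\mathcal{V}\to D_{5s+4}$ over $\Gamma$ is the linear system $|\mathcal{O}_D(\Gamma-H)|$, a single point for general $\Gamma$ in the image, so the locus in the lemma genuinely attains dimension $2s+7$, strictly larger than your claimed $2s+5$. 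To prove the stated upper bound one must in addition control the jumping loci $\Sigma_i=\{\zeta\in D_{s+4}: h^0(D,\mathcal{O}_D(H-\zeta))=i\}$ for $i\geq 1$: there the fibre is $\mathbb{P}^{s+i+3}$, but $\dim\Sigma_i\leq \dim|\mathcal{O}_D(H)|-i+1=s+2-i$, so these pieces contribute at most $2s+5$ and every component of $\mathcal{V}$, hence of its image, has dimension at most $2s+7$. This fibration over the residual divisor $\zeta$, rather than over a decomposition of $\Gamma$, is the missing idea.
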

\begin{proof}
By direct calculation, $\varphi_H:D\hookrightarrow \PP^{s+1}$ is a smooth half-canonical curve with $\mbox{deg}(D)=D\cdot H=4s$ and $g(D)=4s+1$. We consider the incidence variety
$$\V:=\Bigl\{(\Gamma, \zeta)\in D_{5s+4}\times D_{s+4}: \Gamma \in |\OO_D(H+\zeta)| \Bigr\},$$
together with the projections $\pi_1:\V\rightarrow D_{5s+4}$ and $\pi_2:\V\rightarrow D_{s+4}$. Note that $\pi_1(\V)$ is precisely the variety whose dimension we have to compute. To estimate $\mbox{dim}(\V)$ we look at the fibres of $\pi_2$. By Riemann-Roch, $h^0(D, \OO_D(H+\zeta))=h^0(D, \OO_D(H-\zeta))+s+4$, for every $\zeta\in D_{s+4}$. In particular, for a general divisor $\zeta\in D_{s+4}$,  we obtain that $\pi_2^{-1}(\zeta)=\PP H^0(\OO_D(H+\zeta))\cong \PP^{s+3}$, and hence $\V$ has a unique irreducible component of dimension $2s+7$ that dominates $D_{s+4}$.

For $i\geq 1$, the locally closed variety  $\Sigma_i:=\{\zeta\in D_{s+4}: h^0(D, \OO_D(H-\zeta))=i\}$ has dimension at most $\mbox{dim } |\OO_D(H)|-i+1=s+2-i$. If $\zeta \in \Sigma_i$ then
$\pi_2^{-1}(\zeta)\cong \PP^{s+i+3}$, hence $\mbox{dim } \pi_2^{-1}(\Sigma_i)\leq \mbox{dim } \Sigma_i+s+i+3\leq 2s+5.$
To sum up,  all components of $\V$ are of dimension $\leq 2s+7$, implying the same conclusion for $\mbox{dim}(\pi_1(\V)).$
\end{proof}

\vskip 3pt
We now proceed to show that polarized $K3$ surfaces satisfying a mild Brill-Noether genericity condition carry stable rank $2$ Ulrich bundles.
\vskip 3pt

\noindent \emph{Proof of Theorem \ref{thm: ulrich}.}
We start with a $K3$ surface $S\subset \PP^{s+1}$ and let $H\in |\OO_S(1)|$ be a hyperplane section with $H^2=2s$. We fix a smooth curve  $C\in |\mathcal O_S(3)|$ and compute its genus $g(C)=9s+1$.
Invoking \cite{CP95}, note that $C$ has Clifford dimension $1$ and clearly $\mathrm{Cliff}(\mathcal O_C(1))=4s-2$. Our hypothesis implies $\mathrm{gon}(C) = 4s$, hence by Lemma
\ref{lemma: AF11}, $C$ possesses a complete base point free pencil $A\in W^1_{5s+4}(C)$. The candidate Ulrich bundle is the Lazarsfeld-Mukai bundle $E:=E_{C,A}$. More precisely, the general point $(C,A)$ of  any dominating component $\mathcal W$ of the relative space $\mathcal W^1_{5s+4}(|\mathcal O_S(3)|)$ over the linear system $|\mathcal O_S(3)|$ corresponds to a  complete and base point free pencil $\mathfrak g^1_{5s+4}$.

Since the Ulrich condition (\ref{ulr3}) is open, we need to ensure that the \emph{non-Ulrich locus} does not coincide with the whole $\mathcal W$.

\vskip 3pt

{\em Step 1.} For a general point $(C,A)\in \mathcal W$, we verify the partial Ulrich condition
\begin{equation}
\label{eqn: Ulrich}
H^0(S, E_{C,A}(-1))=0.
\end{equation}

We shall find an explicit parametrization of the failure locus of (\ref{eqn: Ulrich}) and count parameters. Consider the following Grassmann bundle over the moduli space of LM bundles
$$
\mathcal G:=\Bigl\{(E_{C,A},\Lambda):  (C,A)\in \mathcal W,\ \Lambda\in G\bigl(2,  H^0(S, E_{C,A})\bigr)\Bigr\}.
$$

Recall from \cite{L} or \cite{AF11} the following dimension estimate
$$\mathrm{dim}(\mathcal W) \ge \mathrm{dim}|\mathcal O_S(3)|+\rho(9s+1,1,5s+4)=10s+6.$$

Since the projection $\mathcal G\rightarrow \mathcal W$ is dominant with fibre $\PP H^0(S, E_{C, A}\otimes E_{C, A}^\vee )$ over a general point $(C, A)\in \W$, the estimate
$\mathrm{dim}(\mathcal G) \ge 10s+6$ holds as well. Since $h^0(S,E_{C, A})=h^0(C, A)+h^1(C, A)=4s$, the dimension of the space  of LM bundles $E_{C, A}$ corresponding to pairs $(C,A)\in\mathcal W$ has dimension at least $\mbox{dim}(\G)-\mathrm{dim}\ G(2,H^0(E_{C,A}))\geq 2s+10$. Observe that the local dimension at $E_{C, A}$ of the moduli space $\mathrm{Spl}(2, \OO_S(3), 5s+4)$ of simple vector bundles of rank $2$ on $S$ with first Chern class $\mathcal O_S(3)$ and second Chern class $5s+4$ is also equal to $c_1^2(E)-2\mbox{rk}(E) \chi(E)+2\mbox{rk}(E)^2+2=2s+10$, that is, a general point of $\mathrm{Spl}(2, \OO_S(3), 5s+4)$ corresponds to a bundle $E_{C, A}$.
\vskip 3pt

 Next, we consider the projective bundle
$$
\mathcal P:=\bigl\{(E_{C,A}, \ell):(C,A)\in\mathcal W, \ \ell\in \PP H^0(S,E_{C,A}) \bigr\},
$$
with $\mbox{dim}(\P)\geq 6s+9$.
Any LM bundle $E_{C,A}$ is given by an extension
\[
0\longrightarrow \mathcal O_S \stackrel{\ell} \longrightarrow E_{C,A}\longrightarrow \mathcal I_{\Gamma/S}(3)\longrightarrow 0,
\]
where $\Gamma\in S^{[5s+4]}$ is a 0-dimensional subscheme which satisfies the \emph{Cayley-Bacharach} (CB) condition with respect to $|\mathcal O_S(3)|$. This condition is necessary in order to obtain locally free extensions, cf. \cite{L2} page 177. Note that
\[
\mathrm{dim}\ \mathrm{Ext}^1(\mathcal I_{\Gamma/S}(3),\mathcal O_S)=1;
\]
indeed, from the exact sequence defining $\Gamma$ and from $h^0(S,E_{C,A}^{\vee})=h^1(S,E_{C,A})=0$, we obtain
an isomorphism $H^0(S,\mathcal O_S)= \mathrm{Ext}^1(\mathcal I_{\Gamma/S}(3),\mathcal O_S)$. In particular, $\Gamma$ determines uniquely the LM bundle $E_{C,A}$ and the map $\varphi:\P\rightarrow S^{[5s+4]}$ given by $\varphi([E_{C, A}, \ell]):=\Gamma$ is generically injective onto its image.

\vskip 3pt

Since $H^0(S,E_{C,A}(-1))\cong H^0(S,\mathcal I_{\Gamma/S}(2))$, we shall show that cycles  $\Gamma\in \mbox{Im}(\varphi)$  with $H^0(S,\mathcal I_{\Gamma/S}(2))\ne 0$
depend on at most $6s+8 \leq \mathrm{dim}(\mathcal P)-1$ parameters. To this end, we consider the incidence variety, see also \cite{CKM12} Proposition 3.18 for the case $s=2$,
$$
\mathcal Z:=\Bigl\{(D,\Gamma):D\in|\mathcal O_S(2)|,\ \Gamma\subset D \mbox{ satisfies CB with respect to } |\mathcal O_S(3)|\Bigr\}.
$$

We fix a smooth section $D\in |\mathcal O_S(2)|$ and an effective divisor $\Gamma\in D_{5s+4}$. For a point $p\in \mathrm{supp}(\Gamma)$, we write $\Gamma=\Gamma_p+p$, where $\Gamma_p\in D_{5s+3}$. Via the exact sequence
 $$0\longrightarrow H^0(S, \OO_S(1))\stackrel{+D}\longrightarrow H^0(S, \I_{\Gamma/S}(3))\longrightarrow H^0(D, \OO_D(3H-\Gamma))\longrightarrow 0,$$
 we rephrase the Cayley-Bacharach condition for $\Gamma$ as requiring that the isomorphism  $H^0(D,\mathcal O_D(3H-\Gamma))\cong H^0(D, \OO_D(3H-\Gamma_p))$ hold, or equivalently by Riemann-Roch,
 $$h^0(D, \OO_D(\Gamma_p-H))=h^0(D, \OO_D(\Gamma-H))-1, \ \ \ \mbox{ for each } p\in \mathrm{supp}(\Gamma).$$
In particular, $h^0(D, \OO_D(\Gamma-H))\geq 1$. Via Lemma \ref{quadr}, we conclude that the dimension of each fibre of the map $\cZ\rightarrow |\OO_S(2)|$ does not exceed $2s+7$; thus $\mbox{dim}(\cZ)\leq \mbox{dim } |\OO_S(2)|+2s+7=6s+8$, which establishes condition (\ref{eqn: Ulrich}) for a general $(C, A)\in \W$.

\medskip

{\em Step 2.} The partial Ulrich condition (\ref{eqn: Ulrich}) implies the full Ulrich condition (\ref{ulr3}).

\vskip 4pt

By Serre duality, using the isomorphism $E^\vee\cong E(-3)$, the condition (\ref{ulr3}) reduces to $H^0(S,E(-1))=H^1(S,E(-1))=0$. Twisting the sequence (\ref{eqn: F}) by $\mathcal O_S(2)$ and taking cohomology, we obtain the exact sequence
\[
0\to H^0(S,E(-1))\rightarrow H^0(C,A)\otimes H^0(S,\mathcal O_S(2))\rightarrow  H^0(C,A(2)) \rightarrow  H^1(S, E(-1))\to 0,
\]
and an isomorphism
$$
H^1(C,A(2))=H^0(S,E(-2))^{\vee}=0,
$$ from (\ref{eqn: Ulrich}); hence, for a general pair $(C,A)\in \W$, the bundle $A(2)$ is non-special, which implies $h^0(C,A(2))=8s+4$. Since $h^0(S,\mathcal O_S(2))=4s+2$, we obtain $H^1(S, E(-1))=0$.

\medskip

We have proved that the failure locus of the Ulrich condition for $E_{C,A}$ is a genuine effective divisor in $\mathcal W$ (or rather in the open subset of $\mathcal W$ given by the vanishing of $H^0(S,E_{C,A}(-2))$).

\medskip

{\em Step 3.} We prove that $E=E_{C, A}$ is stable. Simplicity already follows from \cite{AF11} Remark 3.13 and suppose $E$ is not stable. Following \cite{CHGS} Theorem 2.9, any such $E$ is presented as an extension
\[
0\longrightarrow M\longrightarrow E\longrightarrow N\longrightarrow 0,
\]
where $M,N$ are Ulrich line bundles. Since $\chi(S,M(-1))$, $\chi(S,M(-2))$, $\chi(S,N(-1))$ and $\chi(S,N(-2))$ vanish, we obtain the following numerical conditions: $$M^2=N^2=4s-4 \ \ \mbox{ and } M\cdot H=N\cdot H=3s.$$ Furthermore, $M\otimes N=\mathcal O_S(3)$ and $M\not\cong N$, because the bundle $E$ is simple. In particular, $h^0(S,M\otimes N^\vee)=h^0(S,N\otimes M^\vee)=0$ which implies that $h^1(S,M\otimes N^\vee)=2s+6$. Hence $\dim\ \PP(\mathrm{Ext}^1(N,M))=2s+5$.
Since the space of special Ulrich bundles has dimension $2s+10$ and $\mbox{Pic}(S)$ is discrete, we conclude that a general $E$ is stable.
\endproof

\subsection{Existence of Ulrich stable bundles of even rank.}

We now restrict ourselves to a K3 surface $S\subset \PP^{s+1}$ with Picard number 1. By Corollary \ref{oddrk}, $S$ does not admit Ulrich bundles of odd rank.
 Recall that if $E$ is an Ulrich bundle on $S$ of even rank $2a$, then $\mbox{det}(E)= \OO_S(3a) $ and $c_2(E)= 9a^2s - 4a(s-1)$. We record the following consequence of Riemann-Roch, see also
\cite{CHGS} Proposition 2.12.

\begin{lem}\label{ext}
Let $E$ and $F$ be Ulrich bundles on $S$ of ranks $2a$ and $2b$ respectively. Then
$$\chi(S, E^{\vee} \otimes F) = -2abs - 8ab.$$
\end{lem}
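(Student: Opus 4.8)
The plan is to compute $\chi(S,E^{\vee}\otimes F)$ directly from Hirzebruch--Riemann--Roch on the $K3$ surface $S$, feeding in the numerical invariants of even-rank Ulrich bundles recorded in (\ref{chernu}). The data I would use are: $K_S=0$ and $\chi(\OO_S)=2$ (equivalently $\mathrm{td}(S)=1+2[\mathrm{pt}]$, with $\int_S[\mathrm{pt}]=1$); $H^2=2s$; and, for an Ulrich bundle of rank $2a$, the classes $c_1(E)=\OO_S(3a)$ and $c_2(E)=9a^2s-4a(s-1)$, which yield the Chern character $\mathrm{ch}(E)=2a+3aH+4a(s-1)[\mathrm{pt}]$ on $S$, and likewise $\mathrm{ch}(F)=2b+3bH+4b(s-1)[\mathrm{pt}]$ for $F$ of rank $2b$. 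All of these are either standard or already established in the paper above.

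First I would write $\mathrm{ch}(E^{\vee}\otimes F)=\mathrm{ch}(E)^{\vee}\cdot\mathrm{ch}(F)$, where $(\cdot)^{\vee}$ negates the degree-one component, i.e. $\mathrm{ch}(E)^{\vee}=2a-3aH+4a(s-1)[\mathrm{pt}]$. Multiplying out and collecting by degree: the degree-one term is $2a(3bH)+(-3aH)(2b)=0$, so $c_1(E^{\vee}\otimes F)=0$; the degree-two term is $2a\cdot 4b(s-1)+(-3aH)(3bH)+4a(s-1)\cdot 2b=16ab(s-1)-9abH^2=16ab(s-1)-18abs=-2abs-16ab$ after substituting $H^2=2s$. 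Since $\mathrm{rk}(E^{\vee}\otimes F)=4ab$, Riemann--Roch gives $\chi(S,E^{\vee}\otimes F)=\mathrm{ch}_2(E^{\vee}\otimes F)+2\cdot 4ab=-2abs-16ab+8ab=-2abs-8ab$, as claimed.

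I do not expect a genuine obstacle here: the argument is routine Chern-class bookkeeping, and the only thing to watch is that the invariants of (\ref{chernu}) are substituted correctly (and that they apply to \emph{every} Ulrich bundle of even rank on the Picard-rank-one surface $S$, so that the Euler characteristic is genuinely intrinsic, not merely generic). As a cross-check one may reformulate via Mukai vectors: $v(E)=(2a,3aH,4as-2a)$ and $v(F)=(2b,3bH,4bs-2b)$, whence $\chi(S,E^{\vee}\otimes F)=-\langle v(E),v(F)\rangle=-\bigl(9abH^2-2a(4bs-2b)-2b(4as-2a)\bigr)=-(2abs+8ab)$, in agreement. (One can also verify the specialization $a=b$, $E=F$ reproduces $\mathrm{ext}^1(E,E)=2a^2s+8a^2+2$, matching the moduli dimension count used later in the paper.)
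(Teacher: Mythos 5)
Your computation is correct and follows the same route as the paper, which records the lemma as a direct consequence of Riemann--Roch (citing \cite{CHGS} Proposition 2.12): one feeds the invariants of (\ref{chernu}) into Hirzebruch--Riemann--Roch on the $K3$ surface, exactly as you do, and your caveat that (\ref{chernu}) holds for \emph{every} Ulrich bundle on the Picard-rank-one surface is precisely the hypothesis under which the paper states the lemma. The Mukai-vector cross-check and the consistency with the dimension count (\ref{bound1}) are welcome but not needed.
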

We now show that such an $S$ carries stable Ulrich bundles of every even rank.

\begin{thm}\label{highrank}
For any $K3$ surface $S\subset \PP^{s+1}$ with $\mathrm{Pic}(S)=\mathbb Z\cdot [H]$ and any integer $a\geq 1$, there exists an $(8a^2+2a^2s+2)$-dimensional family of stable Ulrich bundles on $S$ of rank $2a$.
\end{thm}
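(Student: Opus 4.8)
The plan is to prove Theorem \ref{highrank} by induction on $a$, the base case $a=1$ being exactly Theorem \ref{thm: ulrich} together with its dimension count $\dim = 2s+10 = 8\cdot 1 + 2\cdot 1 \cdot s + 2$. For the inductive step, assume stable Ulrich bundles of rank $2b$ exist for all $1 \le b < a$, with the predicted moduli dimension. First I would produce rank $2a$ Ulrich bundles as \emph{extensions}
$$0 \longrightarrow F \longrightarrow E \longrightarrow G \longrightarrow 0$$
of two stable Ulrich bundles $F$, $G$ of ranks $2b$ and $2(a-b)$ for a convenient choice of $b$ (e.g. $b=1$, so $G$ has rank $2(a-1)$; or more symmetrically $b = \lfloor a/2 \rfloor$). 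An extension of Ulrich bundles is again Ulrich, since the defining vanishings (\ref{ulr3}) pass through exact sequences, and its invariants are forced to be (\ref{chernu}) because $\mathrm{Pic}(S) = \mathbb Z \cdot [H]$. By Lemma \ref{ext}, $\chi(S, F^\vee \otimes G) = -2b(a-b)s - 8b(a-b) < 0$, and since $F$, $G$ are stable Ulrich bundles with $\mu(F) = \mu(G)$ and $F \not\cong G$ (they have different ranks), we get $\mathrm{Hom}(F,G) = \mathrm{Hom}(G,F) = 0$, whence $\mathrm{Ext}^2(G,F) = \mathrm{Hom}(F,G)^\vee = 0$ by Serre duality on the $K3$ surface. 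Therefore $\mathrm{ext}^1(G,F) = -\chi(S, G^\vee \otimes F) = 2b(a-b)s + 8b(a-b) > 0$, so non-split extensions exist.

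Next I would show that a general such extension $E$ is \emph{stable}, and compute the dimension of the family. For stability: $E$ is simple for general extension class (a standard argument — any endomorphism either preserves $F$, hence is scalar by stability of $F$ and $G$ and the vanishing $\mathrm{Hom}(F,G)=\mathrm{Hom}(G,F)=0$, or else its restriction to $F$ is zero giving a map $G \to E$ splitting the sequence, excluded for a general class); then, since $E$ is a simple Ulrich bundle of rank $2a$, if it failed to be stable its Harder--Narasimhan/Jordan--H\"older factors would again be Ulrich bundles of smaller even rank (all Ulrich bundles on $S$ being semistable of the same slope by property (iii), and destabilizing subsheaves of an Ulrich bundle being themselves Ulrich by the argument of \cite{CHGS} used in Step 3 of the previous proof), so the locus of non-stable $E$ inside the moduli space is a proper closed subvariety — one estimates its dimension by the same bookkeeping as in Step 3 of the proof of Theorem \ref{thm: ulrich}, now parametrizing all ways of writing $E$ as an extension $0 \to M \to E \to N \to 0$ with $M$, $N$ Ulrich of smaller even rank, and checks this is strictly smaller than the dimension of the full family. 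For the dimension: the moduli space $M_S(2a, \OO_S(3a), c_2)$ of simple sheaves with these invariants is smooth of dimension $c_1(E)^2 - 4a\,\chi(E) + (2\,\mathrm{rk}(E))^2 + 2$; one checks by Riemann--Roch that this equals $8a^2 + 2a^2 s + 2$, exactly the asserted number. Finally, the set of $E$'s arising as extensions of the chosen pair $(F, G)$ with $F, G$ varying in their moduli and the class varying in $\PP(\mathrm{Ext}^1(G,F))$ has dimension
$$\bigl(8b^2 + 2b^2 s + 2\bigr) + \bigl(8(a-b)^2 + 2(a-b)^2 s + 2\bigr) + \bigl(2b(a-b)s + 8b(a-b) - 1\bigr) + \text{(correction for over/undercounting)},$$
and a short computation shows this matches $8a^2 + 2a^2 s + 2$ after accounting for the fact that a stable $E$ near the boundary admits only finitely many such presentations (or, one simply argues that the extensions sweep out an open subset of an irreducible component of $M_S$ of the expected dimension, which suffices).

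The main obstacle I expect is \textbf{stability of the general extension}, i.e.\ showing the non-stable locus is genuinely of smaller dimension rather than all of it — this is exactly the subtlety handled in Step 3 of Theorem \ref{thm: ulrich} for $a=1$, but for higher rank one must rule out destabilization by Ulrich subsheaves of every intermediate even rank $2c$ with $0 < c < a$, not just line bundles; the numerical input is Lemma \ref{ext}, which controls $\mathrm{ext}^1$ between Ulrich bundles of given ranks and thus bounds the dimension of each such "extension stratum", and one must verify that every one of these strata has codimension $\ge 1$ in the $(8a^2 + 2a^2 s + 2)$-dimensional family. A secondary technical point is checking irreducibility / that the extensions fill out a single component of $M_S$, which I would handle by noting that the extension construction is dominant onto a component of the expected dimension and that $M_S(2a, \OO_S(3a), c_2)$ is smooth, so each component has exactly the expected dimension; combined with the openness of the Ulrich and stability conditions this pins down a component whose general member is a stable Ulrich bundle, giving the second assertion of the theorem as stated in the introduction.
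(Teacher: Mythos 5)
Your skeleton (induction on $a$, extensions of stable Ulrich bundles of smaller rank, Lemma \ref{ext} to produce nonzero $\Ext^1$, a Jordan--H\"older-type dimension count) matches the paper's, but the central step as you state it is false: \emph{no} extension $0\to F\to E\to G\to 0$ of Ulrich bundles is stable. All Ulrich bundles on $S$ have the same slope $\frac{3}{2}H^2$ with respect to $H$ (indeed the same reduced Hilbert polynomial, since an Ulrich bundle of rank $\rho$ has $\chi(E(t))=2s\rho{t+2\choose 2}$), so the subbundle $F\subset E$ has the same slope as $E$ and destabilizes it; a general extension is only \emph{simple}, which is what \cite{CHGS} Lemma 4.2 gives and all that the paper claims for it. For the same reason your final dimension count cannot ``match'': the locus of bundles arising as such extensions has dimension at most $\bigl(8b^2+2b^2s+2\bigr)+\bigl(8(a-b)^2+2(a-b)^2s+2\bigr)+(2s+8)b(a-b)-1$, which falls short of $8a^2+2a^2s+2$ by exactly $(2s+8)b(a-b)-1>0$; the extensions therefore do not sweep out an open subset of a component of the moduli space---they lie in the closed locus of non-stable bundles---and this strict inequality is precisely what the argument needs, not something to be corrected away.

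The missing idea is to deform off the extension locus. The paper takes the simple (non-stable) Ulrich bundle $E$ obtained as a general extension, passes to its universal deformation space $\mathrm{Def}(E)$, which has dimension $h^1(S,E^{\vee}\otimes E)=2-\chi(S,E^{\vee}\otimes E)=2+2a^2(s+4)$, and uses openness of the Ulrich condition together with the bookkeeping you sketch: the graded object of any strictly semistable Ulrich bundle is a direct sum $E_1^{\oplus k_1}\oplus\cdots\oplus E_n^{\oplus k_n}$ of stable Ulrich bundles, $h^0$ and $h^2$ of $E_j^{\vee}\otimes F$ are bounded by the multiplicity of $E_j$ in $F$, so $\mathrm{dim}\ \Ext^1(E_j,F)\leq k+(2s+8)a_jb$ by Lemma \ref{ext}, and summing over all ways of building such bundles by successive extensions shows the entire non-stable Ulrich locus has dimension strictly below $2+2a^2(s+4)$. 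Hence a general deformation of $E$ is a stable Ulrich bundle, and this is what produces the $(8a^2+2a^2s+2)$-dimensional family; your argument, which stops at the extensions themselves, never leaves the strictly semistable locus. A minor further slip: with $b=1$ and $a=2$ the bundles $F$ and $G$ both have rank $2$, so ``different ranks'' does not force $F\not\cong G$; you must choose them non-isomorphic, which is possible because rank-two stable Ulrich bundles vary in a $(2s+10)$-dimensional family, exactly as the paper stipulates for $a=2$.
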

\begin{proof}
Our proof follow closely the lines of \cite{CHGS} Theorem 5.7 and especially those of \cite{C} Theorem 3.1. We proceed by induction on $a$. The case $a=1$ is part of Theorem \ref{thm: ulrich}. Suppose there exist stable Ulrich bundles of any even rank smaller than $2a$. We choose stable Ulrich bundles $F_1$
and $F_2$ of ranks $2$ and $2a-2$ respectively, with $F_1\ncong F_2$ in the case $a=2$. From Lemma \ref{ext}, we find that $\dim \ \Ext^1(F_1,F_2) >0$, that is, a general extension $E\in \PP \mbox{Ext}^1(F_2, F_1)$ is a \emph{simple} Ulrich bundle of rank $2a$, cf. \cite{CHGS} Lemma 4.2.
Let $\mbox{Def}(E)$ be the universal deformation space of $E$; since the deformation functor for simple bundles is pro-representable, $\mbox{Def}(E)$ can be constructed in the \'etale topology and its dimension equals
\begin{equation} \label{bound1}
\mathrm{dim } \ \mathrm{Def}(E)= h^1(S, E^{\vee} \otimes E)  =  2- \chi(S, E^{\vee} \otimes E) = 2 + 2a^2s +8a^2 .
\end{equation}

By a parameter count, we show that the general element of $\mbox{Def}(E)$ corresponds to a stable bundle. Since the Jordan-H\"older filtration of a strictly semistable Ulrich bundle is a direct sum of stable Ulrich bundles (of the same slope), we are led to count the number of moduli of vector bundles appearing in this way. We fix stable, pairwise non-isomorphic Ulrich bundles $E_1, \ldots , E_n$ where $\rk(E_i) =2a_i$ for $i=1, \ldots, n$. By (\ref{chernu}), the Chern classes of each $E_i$ are uniquely determined by their ranks. We consider the family $\cU$ of Ulrich bundles $E$ of rank $2a$ such that
$$\mbox{gr}(E)=E_1^{\oplus k_1}\oplus \cdots
\oplus E_n^{\oplus k_n}.$$ Thus we have $a= k_1 a_1 +
\cdots + k_n a_n$ and let $m:=k_1 + \cdots + k_n$ be the total number of bundles employed.

Let $F$ be an Ulrich bundle of rank $2b$ constructed from successive extensions of some of the $E_i$'s, such that for a given $j\in \{1, \ldots, n\}$, precisely $k$  copies of the bundle $E_j$ are used.  Since $\mbox{Hom}(E_i, E_j)=0$ for $i\neq j$, we find that $h^0(S, E_j^{\vee} \otimes F) \leq k$
and similarly, by Serre duality, $h^2(S, E_j^{\vee} \otimes F) \leq k $. Thus,
\begin{equation} \label{bound2}
 \dim \ \Ext^1(E_j, F) \leq k - \chi(S, E_j^{\vee} \otimes F) = k + (2s+8)a_j b.
 \end{equation}

Using the bounds \eqref{bound1} and \eqref{bound2}, we conclude that the space $\cU$ of Ulrich bundles obtained by a succession of extensions involving $k_i$ times the bundle $E_i$ for $i=1, \ldots, n$ has dimension at most
 $$
 \sum_{i=1}^n \bigl(2+ 2a_i^2(s+4) \bigr) + (2s+8)\Bigl(\sum_{i<j}k_i k_ja_i a_j + \sum_{i=1}^n {k_i \choose 2} a_i^2\Bigr)+\sum_{i=1}^n {k_i \choose 2} - (m-1) ,
 $$
where the first sum is the moduli count for the universal deformation spaces $\mbox{Def}(E_i)$, whereas the other terms account for the dimension of the isomorphism classes of the $m-1$ successive extensions. It is not difficult to check that this quantity is smaller that
$$
 2 + 2(s+4) \left( \sum_{i=1}^n k_i^2a_i^2 +  2\sum_{i<j}k_ik_ja_ia_j \right)= 2+2a^2(s+4)
$$
which is the dimension of the moduli space of simple Ulrich bundles of rank $2a$.  Thus, there exist simple Ulrich bundles
of rank
 $2a$ which are stable.
\end{proof}

\vskip 4pt

\subsection{The Chow form of a $K3$ surface.} Let $S\subset \PP^{s+1}$ be a $K3$ surface for which the hypothesis of Theorem \ref{thm: ulrich} applies. We choose a special rank two Ulrich bundle $E$ on $S$, hence $\mbox{det}(E)=\OO_S(3)$ and $h^0(S, E)=4s$. We have defined in the introduction the exterior algebras ${\bf \Lambda}:=\bigwedge^{\bullet} H^0(S, \OO_S(1))^{\vee}$ and ${\bf \Lambda}^{\vee}:=\bigwedge^{\bullet} H^0(S, \OO_S(1))$, with gradings $${\bf \Lambda}_{-i}:=\bigwedge ^i H^0(S, \OO_S(1))^{\vee} \mbox{ and } \ {\bf \Lambda}^{\vee}_i:=\bigwedge ^i H^0(S, \OO_S(1))$$
respectively. To $E$, viewed as a sheaf on $\PP^{s+1}$, one associates a minimal bi-infinite exact sequence of free graded ${\bf \Lambda}$-modules called the \emph{Tate resolution}:
$$T^{\bullet}(E): \cdots \rightarrow T^{-2}(E)\rightarrow T^{-1}(E)\rightarrow T^0(E)\rightarrow T^1(E)\rightarrow T^2(E)\rightarrow \cdots$$
It is shown in \cite{EFS} that one has isomorphisms of graded ${\bf \Lambda}$-modules
$$T^p(E)=\bigoplus_{i=0}^2 {\bf \Lambda}^{\vee}(i-p)\otimes H^i(S, E(p-i)).$$
For an Ulrich bundle, the Tate resolution is particularly simple, for instance
$$T^{-1}(E)={\bf \Lambda}^{\vee}(3)\otimes H^2(S, E(-3)) \ \mbox{ and } \ \ T^0(E)={\bf \Lambda}^{\vee}\otimes H^0(S, E).$$
To pass from the Tate resolution of $E$ to the Chow form $\cZ(S)$ one applies the functor ${\bf U}_3$ of \cite{ES} from the category of free graded ${\bf \Lambda}$-modules to that of vector bundles over the Grassmannian $\GG:=G\bigl(s-1, H^0(S, \OO_S(1))^{\vee}\bigr)$ of projective codimension $3$ planes in $\PP^{s+1}$. This functor replaces the module ${\bf \Lambda}^{\vee}(p)$ by the $p$-th exterior power of the rank $3$ tautological bundle $\cU$ on $\GG$. The resulting complex ${\bf U}_3 T^{\bullet }(E)$ consists of a single morphism of vector bundles over $\GG$
$${\bf U}_3(\varphi):  H^2(S, E(-3))\otimes \bigwedge^3 \cU \rightarrow H^0(S, E)\otimes \OO_{\GG}.$$
The determinant of $\varphi$ gives the equations in Pl\"ucker coordinates of the Chow form of $S$, see \cite{ES} Corollary 3.4. In what follows we describe explicitly the linear map
\begin{equation}\label{cech}
\varphi: \bigwedge^3 H^0(S, \OO_S(1))\otimes H^2(S, E(-3))\rightarrow H^0(S, E).
\end{equation}

\vskip 3pt
\noindent \emph{Proof of Theorem \ref{chow}.}
We fix sections $x^1, x^2, x^3\in H^0(S, \OO_S(1))$ and $u\in H^2(S, E(-3))$. Let $\cV=\{V_{\alpha}\}_{\alpha}$ be a covering of $S$ and $\{u_{\alpha \beta \gamma}\}\in \check{Z}^2(\cV, E(-3))$
a \v{C}ech cocycle representative of $u$. Since $H^2(S, E(-2))=0$, we obtain that $\bigl\{x^i\cdot u_{\alpha \beta \gamma}\bigr\} \in \check{Z}^2(\cV, E(-2))=\check{B}^2(\cV, E(-2))$.
In particular, there exist $1$-cocycles $\{t_{\alpha \beta}^i\}\in \check{C}^1(\cV, E(-2))$ such that the following hold for $i=1,2,3$:
$$x^i\cdot u_{\alpha \beta \gamma}=t_{\alpha \beta}^i-t_{\beta \gamma}^i+t_{\gamma \alpha}^i\in \Gamma(U_{\alpha \beta \gamma}, E(-2)).$$
Since $H^1(S, E(-1))=0$, we find $\{x^i\cdot t_{\alpha \beta}^j-x^j\cdot t_{\alpha \beta}^i\}\in \check{Z}^1(\cV, E(-1))=\check{B}^1(\cV, E(-1))$, therefore for $i\neq j$ there exist cocycles $\{v^{ij}_\alpha\}\in \check{C}^0(\cV, E(-1))$, such that
$$ x^i\cdot t_{\alpha \beta}^j-x^j\cdot t_{\alpha \beta}^i=v^{ij}_{\alpha}-v^{ij}_{\beta}.$$ Therefore we obtain a section $w\in H^0(S, E)$ such that
$$w=x^1v^{23}_{\alpha}-x^2v_{\alpha}^{13}+x^3v_{\alpha}^{12}=x^1v^{23}_{\beta}-x^2v_{\beta}^{13}+x^3v_{\beta}^{12}\in \Gamma(V_{\alpha \beta}, E).$$
We set $\varphi(x_1\otimes x_2 \otimes x_3\otimes s):=w$. Clearly this construction vanishes on symmetric tensors, hence it gives rise to the map (\ref{cech}), which is the degree zero part of the ${\bf \Lambda}$-module map $$\Bigl\{\varphi^i:\bigwedge^{i+3} H^0(S, \OO_S(1))\otimes H^2(S, E(-3))\longrightarrow \bigwedge ^i H^0(S, \OO_S(1))\otimes H^0(S, E)\Bigr\}_{i}$$ appearing in the Tate resolution of $E$. It is straightforward to check that the image of $\{\varphi^i\}_i$ is precisely the kernel of the Koszul differential
$T^0(E)\rightarrow T^1(E)$, which finishes the proof. \hfill $\Box$

\begin{rmk}
One may also view the above constructed ${\bf \Lambda}$-linear differential as a linear map $\varphi:H^0(S, E)^{\vee}\otimes H^0(S, E)^{\vee}\rightarrow \bigwedge^3 H^0(S, \OO_S(1))^{\vee}$, given by a $4s\times 4s$ matrix of linear forms in the Pl\"ucker coordinates of $\GG$. This matrix is antisymmetric and its pfaffian gives the equation for $\cZ(S)$.
\end{rmk}

\section{Ulrich bundles and the Minimal Resolution Conjecture}

The aim of this closing section is to explain how the existence of a rank $2$ Ulrich bundle on a $K3$ surface $S$ determines the Koszul cohomology groups of some $0$-cycles on $S$ and  in particular implies that these cycles verify a MRC type condition.  Following \cite{G}, for a sheaf $\F$ and a line bundle $L$ on a projective variety $X$, we form the graded $\SS:=\mbox{Sym } H^0(X, L)$-module $\Gamma_X(\F, L):=\bigoplus_{j\in \mathbb Z} H^0(X, \F\otimes L^{\otimes j})$ and denote
$$K_{i, j}(X;\F, L):=\mbox{Tor}^S_i(\Gamma_X(\F, L), \mathbb C)_{i+j}.$$
 Note that with this notation, one has that $K_{i, j}(X, L)=K_{i, j}(X; \OO_X, L)$.

 \vskip 3pt

We consider a rank $2$ Ulrich bundle $E$ on a $K3$ surface $S$ constructed in Theorem \ref{thm: ulrich} and for $n\geq 1$, we set   $E_n:=E(n)$. The new vector bundle is globally generated and has Chern classes $c_1(E_n)=\mathcal O_S(2n+3)$ and $c_2(E_n)=\gamma_n:=2n^2s+6sn+5s+4$ respectively. We choose a general section $\sigma\in H^0(S,E_n)$ and consider the associated $0$-dimensional subscheme $\Gamma=\Gamma_n\subset S$, together with the corresponding short exact sequence:
\[
0\longrightarrow \mathcal O_S(-2n-3)\longrightarrow E_n^{\vee}\longrightarrow \mathcal I_{\Gamma/S}\longrightarrow 0.
\]

Note that $E_n^{\vee}\cong E(-n-3)$. Since $h^1(S,\mathcal O_S(m))=0$ for all $m$, we have an induced short exact sequence of graded modules over $\SS:=\mbox{Sym } H^0(\mathcal O_S(1))$
\[
0\longrightarrow  \bigoplus_j H^0(S, \mathcal O_S(j-2n-3))\longrightarrow
\bigoplus_jH^0(S, E(j-n-3))\longrightarrow  \bigoplus_j H^0(I_{\Gamma/S}(j))\longrightarrow 0,
\]
which, after applying \cite{G} Corollary 1.d.4, yields to a long exact sequence for Koszul cohomology groups:
\[
\cdots\rightarrow K_{i,j-n-3}(S; E,H)\rightarrow K_{i,j}(\PP^{s+1}; \mathcal I_{\Gamma/S},\mathcal O_{\PP^{s+1}}(1))\rightarrow K_{i-1,j-2n-2}(S,H)\to
\]
\[
\to K_{i-1,j-n-2}(S; E,H)\rightarrow K_{i-1,j+1}(\PP^{s+1}; \mathcal I_{\Gamma/S},\mathcal O_{\PP^{s+1}}(1))
\rightarrow K_{i-2,j-2n-1}(S,H)\rightarrow \cdots
\]

As $E$ is an Ulrich bundle, $K_{p,q}(S; E,H)=0$ for all $q\ne 0$, see \cite{ES} Proposition 2.1. 
Furthermore, using \cite{FMP} Theorem 1.2,  for $j\geq \mbox{reg}(S)=4$ we obtain isomorphisms
\begin{equation}\label{fmp}
K_{i, j}(\Gamma, \OO_{\Gamma}(1))\cong K_{i-1, j+1}\bigl(\PP^{s+1}; \I_{\Gamma/S}, \OO_{\PP^{s+1}}(1)\bigr).
\end{equation}
We show that the condition that $E$ be an Ulrich bundle on $S$, translates into a MRC type vanishing condition for the Koszul cohomology groups of the scheme $\Gamma$.

\begin{prop}\label{ulmrc}
Let $i, a, n\geq 1$ and $\Gamma\subset S$ the zero locus of a general section of the vector bundle $E_n$. The following isomorphisms hold:
$$K_{i, n+3+a}(\Gamma, \OO_{\Gamma}(1))=K_{i-2, a+2-n}(S, H) \ \mbox{ and }  \ K_{i+1, n+2+a}(\Gamma, \OO_{\Gamma}(1))=K_{i-1,a+1-n}(S, H).$$
If, furthermore, $S$ is a $K3$ surface with $\mathrm{Pic}(S)=\mathbb Z\cdot [H]$, then
$$b_{i, n+3+a}(\Gamma)\cdot b_{i+1, n+2+a}(\Gamma)=0.$$
\end{prop}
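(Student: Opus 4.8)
The plan is to separate a purely homological part — the two isomorphisms, obtained by feeding the Ulrich vanishing into the long exact sequence recalled just above the statement — from the single geometric input needed for the product vanishing, namely that the minimal free resolution of $S$ is natural. The homological part is routine; the crux is the naturality input.

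For the isomorphisms I would argue as follows. Since $E$ is an Ulrich bundle, $K_{p,q}(S;E,H)=0$ for every $q\neq 0$; combining this with (\ref{fmp}), valid for $j\geq\reg(S)=4$, the displayed long exact sequence degenerates. Concretely, in the stretch
$$K_{i-1,j-n-2}(S;E,H)\longrightarrow K_{i-1,j+1}\bigl(\PP^{s+1};\I_{\Gamma/S},\OO_{\PP^{s+1}}(1)\bigr)\longrightarrow K_{i-2,j-2n-1}(S,H)\longrightarrow K_{i-2,j-n-1}(S;E,H)$$
the two outer groups vanish as soon as $j\notin\{n+1,n+2\}$, so exactness forces
$$K_{i,j}(\Gamma,\OO_{\Gamma}(1))\ \cong\ K_{i-1,j+1}\bigl(\PP^{s+1};\I_{\Gamma/S},\OO_{\PP^{s+1}}(1)\bigr)\ \cong\ K_{i-2,j-2n-1}(S,H)$$
for all $j\geq 4$ with $j\notin\{n+1,n+2\}$. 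Taking $j=n+3+a$ gives the first stated isomorphism, and running the same argument with $(i,j)$ replaced by $(i+1,\,n+2+a)$ gives the second; the hypotheses $i,a,n\geq 1$ are precisely what keeps the index conditions satisfied ($n+3+a\geq 5$, $n+2+a\geq 4$, and neither equals $n+1$ or $n+2$ once $a\geq 1$).

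For the last assertion I would use that $S\subset\PP^{s+1}$ is arithmetically Cohen--Macaulay, so $b_{p,q}(S)=b_{p,q}(C)$ for a general hyperplane section $C\in|H|$; the two isomorphisms then read $b_{i,n+3+a}(\Gamma)=b_{i-2,\,a+2-n}(S)$ and $b_{i+1,n+2+a}(\Gamma)=b_{i-1,\,a+1-n}(S)$, so the claim becomes $b_{p,q}(S)\cdot b_{p+1,q-1}(S)=0$ with $p=i-2$, $q=a+2-n$ — exactly the naturality of the minimal free resolution of $S$. Since $\mathrm{Pic}(S)=\mathbb Z\cdot[H]$, the curve $C$ is canonical of genus $g=s+1$ and of maximal Clifford index $\Cliff(C)=\lfloor\tfrac{g-1}{2}\rfloor$, so by Green's conjecture for curves on $K3$ surfaces (\cite{AF11}), the Green--Lazarsfeld non-vanishing theorem, and the duality $K_{p,q}(C,K_C)\cong K_{g-2-p,\,3-q}(C,K_C)^{\vee}$, the Betti table of $S$ is the generic one: $b_{0,0}(S)=b_{s-1,3}(S)=1$, the row $q=1$ is supported in columns $1\leq p\leq g-2-\Cliff(C)$, the row $q=2$ dually in columns $\Cliff(C)\leq p\leq g-3$, and all other entries vanish. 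Inspecting these three strands, the supports of $b_{\bullet,q}(S)$ and $b_{\bullet,q-1}(S)$ are never both nonzero in two consecutive columns, which yields the required vanishing; the boundary cases $i\in\{1,2\}$ (so $p\leq 0$) or $q\leq 0$ are immediate since then one of the two Betti numbers of $S$ vanishes for trivial degree reasons. I expect the naturality of the resolution of $S$ to be the only non-formal ingredient, everything else being standard long-exact-sequence bookkeeping.
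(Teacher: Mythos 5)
Your proposal is correct and follows essentially the same route as the paper: the two isomorphisms are extracted exactly as in the text, by feeding the Ulrich vanishing $K_{p,q}(S;E,H)=0$ for $q\neq 0$ into the displayed long exact sequence and invoking (\ref{fmp}) at $j=n+3+a$ (resp.\ $j=n+2+a$), and the product vanishing rests on the same non-formal input, namely the Voisin/\cite{AF11} description of the Koszul cohomology of $(S,H)$, which for $\mathrm{Pic}(S)=\mathbb Z\cdot[H]$ (maximal Clifford index) forces the resolution to be natural. The only cosmetic difference is that you re-derive the surface-level statement by passing to a canonical hyperplane section via the ACM property, whereas the paper quotes the $K3$ formulation directly; the content is identical.
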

\begin{proof} The isomorphisms follow from the previous exact sequence and (\ref{fmp}) by substituting $j:=n+3+a$. The second part is an application of the first, coupled with the description of the Koszul cohomology of a $K3$ surface given by Voisin \cite{V} in the course of her  proof of Green's conjecture, see also \cite{AF11} Theorem 1.3 for a precise formulation of what is being used here. Indeed, for any smooth polarized $K3$ surface $(S,H)$ of genus $g$, if $c$ denotes the Clifford index of all smooth curves in $|\OO_S(H)|$, then the following equivalences are established in \emph{loc. cit.}:
$$K_{p,1}(S,H)\neq 0\Leftrightarrow 1\leq p\leq g-c-2\ \mbox{ and }\ \ K_{p-1,2}(S,H)=0\Leftrightarrow c+1\leq p\leq g.$$
If $\mbox{Pic}(S)=\mathbb Z\cdot [H]$, then of course $c=\lfloor \frac{g-1}{2}\rfloor$, in particular $c+1>g-c-2$. To use the terminology of \cite{CEFS}, it follows that the resolution of
the $K3$ surface is \emph{natural}, that is, the product of any two consecutive Betti numbers on a diagonal of the Betti table is equal to zero.
\end{proof}

%

\begin{rmk} Suppose $S$ is a $K3$ surface and let  $\Gamma\subset S\subset \PP^{s+1}$ be the zero locus of a general section of the vector bundle $E_n$ as above. Proposition \ref{ulmrc} gives a complete picture of the minimal resolution of $\Gamma$, in the spirit of the Minimal Resolution Conjecture. Indeed, for $j<n+3$, one has $h^0(\mathcal I_{\Gamma/S}(j))=0$ which implies $h^0(\mathcal I_{\Gamma}(j))=h^0(\mathcal I_{S}(j))$ and furthermore $K_{i,j}(S,H)=K_{i,j}(\Gamma,\mathcal O_\Gamma(1))$ for all $i$ and for $j\ge n+3$. In particular, via Proposition \ref{ulmrc}, if $\mathrm{Pic}(S)=\mathbb Z\cdot [H]$, then the Betti numbers of $\Gamma$ verifiy the condition  $b_{i+1,j-1}(\Gamma)\cdot b_{i,j}(\Gamma)=0$ for all $i$ and $j$.

\vskip 3pt

However, for large $n$ there is a distinction  between the minimal resolution of $\Gamma$ and that of $\gamma_n$ general points of $S$. The Hilbert polynomial of $S$ is given by $P_S(n)=n^2s+2$. Set $u_n:=\left[\sqrt{\frac{\gamma_n-2}{s}}\right]+1$, so that $P_S(u_n-1)\le \gamma_n< P_S(u_n)$. MRC for a set $Z$ of  $\gamma_n$ general points on $S$ predicts that $b_{i+1,u_n-1}(Z)\cdot b_{i,u_n}(Z)=0$; at the same time, there exists $i$ such that $b_{i,u_n-1}(Z)\ne 0$, see \cite{FMP}, Theorem 1.2. On the other hand, for large $n$, Proposition \ref{ulmrc} implies that $b_{i,u_n-1}(\Gamma)=b_{i,u_n}(\Gamma)=0$ for all  $i$.

\vskip 3pt

The explanation for this discrepancy lies in the fact that the Hilbert function of $\Gamma$ is different from that of $\gamma_n$ general points on $S$. Indeed, as observed in \cite{FMP}, Theorem 1.2, for a general  $Z\in S^{[\gamma_n]}$ the equality $h^0(\mathcal I_Z(u_n-1))=h^0(\mathcal I_S(u_n-1))$ holds, whereas $$h^0(\mathcal I_\Gamma(u_n-1))-h^0(\mathcal I_S(u_n-1))=h^0(\mathcal I_{\Gamma/S}(u_n-1))=h^0(S, E(u_n-n-4))\ne 0,$$ as $u_n\geq n+4$. Since graded Betti numbers vary semicontinuously only in families of subschemes having the same \emph{Hilbert function}, one cannot pass from the resolution of $\Gamma$ to that of a general $0$-cycle of length $\gamma_n$ on $S$.

\end{rmk}

\end{document}